\documentclass[leqno]{amsart}

\numberwithin{equation}{section} 
\raggedbottom

\usepackage{amsmath,amsfonts,amssymb,amsthm,latexsym}
\usepackage{enumerate}
\usepackage{cancel}
\usepackage{verbatim}
\usepackage{cases}
\usepackage[square,comma,numbers,sort&compress]{natbib}

\usepackage[colorlinks=true]{hyperref}
\hypersetup{urlcolor=blue, citecolor=red}



\usepackage{multicol, color}

\newcounter{mnote}
 \setcounter{mnote}{0}

\usepackage{mathrsfs}


\theoremstyle{plain}
\newtheorem{theorem}{Theorem}[section]

\newtheorem{lemma}[theorem]{Lemma}

\theoremstyle{definition}
\newtheorem{definition}[theorem]{Definition}

\theoremstyle{remark}
\newtheorem{remark}[theorem]{Remark}

\newcommand{\vect}[1]{\mathbf{#1}}

\newcommand{\bk}{\vect{k}}

\newcommand{\bu}{\vect{u}}
\newcommand{\bv}{\vect{v}}
\newcommand{\bw}{\vect{w}}
\newcommand{\bm}{\vect{m}}
\newcommand{\bx}{\vect{x}}
\newcommand{\by}{\vect{y}}

\newcommand{\be}{\vect{e}}

\newcommand{\field}[1]{\mathbb{#1}}
\newcommand{\nN}{\field{N}}
\newcommand{\nZ}{\field{Z}}

\newcommand{\nR}{\field{R}}



\newcommand{\cD}{\mathcal D}




\newcommand{\bun}{\vect{u}^{(n)}}
\newcommand{\thetan}{\theta^{(n)}}
\newcommand{\omegan}{\omega^{(n)}}

\newcommand{\nT}{\mathbb T}
\newcommand{\vphi}{\varphi}
\newcommand{\maps}{\rightarrow}

\newcommand{\sand}{\quad\text{and}\quad}

\newcommand{\tac}{\textasteriskcentered}


\newcommand{\pd}[2]{\frac{d #1}{d #2}}

\newcommand{\abs}[1]{\left\lvert#1\right\rvert}
\newcommand{\norm}[1]{\left\lVert#1\right\rVert}
\newcommand{\set}[1]{\left\{#1\right\}}
\newcommand{\ip}[2]{\left<#1,#2\right>}
\newcommand{\pnt}[1]{\left(#1\right)}
\newcommand{\pair}[2]{\left(#1,#2\right)}

\newcommand{\adv}[2]{(#1 #2)}

\newcommand{\diff}[1]{\widetilde{#1}}
\newcommand{\bud}{\diff{\bu}}
\newcommand{\xid}{\diff{\xi}}
\newcommand{\thetad}{\diff{\theta}}

\begin{document}
\title[Boussinesq Equations]{Global Well-posedness for The 2D Boussinesq System Without Heat Diffusion and With Either Anisotropic Viscosity or Inviscid Voigt-$\alpha$ Regularization}

\date{October 24, 2010.}

\author{Adam Larios}
\address[Adam Larios]{Department of Mathematics\\
                University of California, Irvine\\
        Irvine CA 92697-3875, USA}
\email[Adam Larios]{alarios@math.uci.edu}
\author{Evelyn Lunasin}
\address[Evelyn Lunasin]{Department of Mathematics\\
    University of Michigan\\
    Ann Arbor, MI 48104 USA}
\email[Evelyn Lunasin]{lunasin@umich.edu}
\author{Edriss S. Titi}
\address[Edriss S. Titi]{Department of Mathematics, and Department of Mechanical and Aero-space Engineering\\
University of California, Irvine\\
Irvine CA 92697-3875, USA.\\
Also The Department of Computer Science and Applied Mathematics\\
The Weizmann Institute of Science, Rehovot 76100, Israel}
\email[Edriss S. Titi]{etiti@math.uci.edu and edriss.titi@weizmann.ac.il}

\keywords{Anisotropic Boussinesq equations,
Boussinesq Equations, Voight-$\alpha$
regularization}
\thanks{MSC 2010 Classification: 35Q35; 76B03; 76D03; 76D09}

\begin{abstract}
We establish global existence and uniqueness theorems for the two-dimensional non-diffusive Boussinesq system with viscosity only in the horizontal direction, which arises in Ocean dynamics.  This work improves the global well-posedness results established recently by R. Danchin and M. Paicu for the Boussinesq system with anisotropic viscosity and zero diffusion.  Although we follow some of their ideas, in proving the uniqueness result, we have used an alternative approach by writing the transported temperature (density) as $\theta = \Delta\xi$ and adapting the techniques of V. Yudovich for the 2D incompressible Euler equations.   This new idea allows us to establish uniqueness results with fewer assumptions on the initial data for the transported quantity $\theta$.  Furthermore, this new technique allows us to establish uniqueness results without having to resort to the paraproduct calculus of J. Bony.

We also propose an inviscid $\alpha$-regularization
for the two-dimensional inviscid, non-diffusive
Boussinesq system of equations, which we call the
Boussinesq-Voigt equations.  Global regularity of
this system is established.  Moreover, we establish
the convergence of solutions of the
Boussinesq-Voigt model to the corresponding
solutions of the two-dimensional Boussinesq system
of  equations for inviscid flow without heat
(density) diffusion on the interval of existence of
the latter.  Furthermore, we derive a criterion for
finite-time blow-up of the solutions to the
inviscid, non-diffusive 2D Boussinesq system based
on this inviscid Voigt regularization. Finally, we
propose a Voigt-$\alpha$ regularization for the
inviscid 3D Boussinesq equations with diffusion,
and prove its global well-posedness.  It is worth
mentioning that our results are also valid in the
presence of the $\beta$-plane approximation of the
Coriolis force.
  \end{abstract}

 \maketitle
 \thispagestyle{empty}

\section{Introduction}\label{sec:Int}
The $d$-dimensional Boussinesq system of ocean and atmosphere dynamics (without rotation) in a domain $\Omega\subset\nR^d$ over the time interval $[0,T]$ is given by
\begin{subequations}\label{bouss}
\begin{alignat}{2}
\label{bouss_mo}
\partial_t\bu + \sum_{j=1}^d\partial_j(u^j \bu) &=-\nabla p + \theta \be_d + \nu\triangle\bu,
\qquad&& \text{in }\Omega\times[0,T],\\
\label{bouss_div}
\nabla \cdot \bu &=0,
\qquad&& \text{in }\Omega\times[0,T],\\
\label{bouss_den}
\partial_t\theta + \nabla\cdot(\bu \theta) &=\kappa\triangle\theta,
\qquad&& \text{in }\Omega\times[0,T],\\
\label{bouss_IC}
\bu(\bx,0)&=\bu_0(\bx),\quad\theta(\bx,0)=\theta_0(\bx),
\qquad&& \text{in }\Omega,
\end{alignat}
\end{subequations}
with appropriate boundary conditions (discussed below).  Here $\nu\geq0$ is the fluid viscosity, $\kappa\geq0$ is the diffusion coefficient.  The spatial variable is denoted $\bx=(x^1,\ldots,x^d)\in\Omega$, and the unknowns are the fluid velocity field $\bu\equiv\bu(\bx,t)\equiv(u^1(\bx,t),\ldots,u^d(\bx,t))$, the fluid pressure $p(\bx,t)$, and the function $\theta\equiv\theta(\bx,t)$, which may be interpreted physically as a thermal variable (e.g., when $\kappa>0$), or a density variable (e.g., when $\kappa=0$).  We write $\be_d=(0,\ldots,0,1)$ for the $d^{\text{th}}$ standard basis vector in $\nR^d$.  We use the notation $P^0_{\nu,\kappa}$, for the Boussinesq system with viscosity $\nu>0$ and with diffusion $\kappa>0$.    We attach a subscript $x$ to the viscosity $\nu$ when we mean that the viscosity occurs in the horizontal direction only, i.e. in the case of anisotropic viscosity (see equation \eqref{bouss_aniso} below).  The superscript of zero is reserved for a parameter $\alpha$, introduced below.

In two dimensions, the global regularity in time of the problem $P^0_{\nu,\kappa}$  is well-known (see, e.g., \cite{Cannon_DiBenedetto_1980, Temam_1997_IDDS}), and follows essentially from the classical methods for Navier-Stokes equations.  However, in the case $\nu=0, \kappa=0$, ($P^0_{0,0}$), global existence and uniqueness still remains an open problem  (see, e.g., \cite{Chae_2005,Chae_Nam_1997} for studies in this direction).  The local existence and uniqueness of classical solutions to $P_{0,0}^0$ was established in \cite{Chae_Nam_1997}, assuming the initial data $(\bu_0,\theta_0)\in  H^3\times H^3$.   In particular, an analogous Beale-Kato-Majda criterion for blow-up of smooth solutions is established in \cite{Chae_Nam_1997} for the inviscid, non-diffusive Boussinesq system; namely, that the smooth solution exists on $[0,T]$ if and only if $\int_0^T\|\nabla\theta(t)\|_{L^\infty }\;dt < \infty$.

One of our main results in this study, discussed in Section \ref{sec:aniso}, involves the global existence and uniqueness theorems for the two-dimensional non-diffusive Boussinesq system with viscosity only in the horizontal direction, denoted as $P^0_{\nu_x,0}$ (see equations \eqref{bouss_aniso} below).  These equations are sometimes called the non-diffusive Boussinesq equations with anisotropic viscosity.  In order to set the main ideas of our proof, in Section \ref{s:P_nu_zero_zero} we first establish the global existence of a certain class of weak solutions and the global existence and uniqueness to the two-dimensional viscous and non-diffusive  Boussinesq system of equations (denoted as $P^0_{\nu,0}$) with Yudovich-type initial data.  The other main result we have in this study is presented in Section \ref{s:P_alpha_zero_zero}.  We propose an inviscid $\alpha$-regularization for the two-dimensional inviscid, non-diffusive Boussinesq system of equations (denoted as $P^{\alpha}_{0,0}$), which we call the Boussinesq-Voigt equations, and also establish its global regularity.  We include in this section a study of the behavior of solutions to $P^{\alpha}_{0,0}$ as the parameter $\alpha\rightarrow 0$, which leads to a new criterion for the finite-time blow-up of solutions to the 2D, or 3D, inviscid, non-diffusive Boussinesq equations.   We also give a short discussion of a Voigt-regularization for the three-dimensional Boussinesq equations in the case $P^{\alpha}_{0,\kappa}$.   

The two-dimensional viscous, non-diffusive Boussinesq system, ($P^0_{\nu, 0}$) is given by:
\begin{subequations}\label{bouss_nd}
\begin{alignat}{2}
\label{bouss_nd_mo}
\partial_t\bu + \sum_{j=1}^2\partial_j(u^j \bu) &=\nu\Delta \bu-\nabla p + \theta \be_2,
\qquad&& \text{in }\nT^2\times[0,T],\\
\label{bouss_nd_div}
\nabla \cdot \bu &=0,
\qquad&& \text{in }\nT^2\times[0,T],\\
\label{bouss_nd_den}
\partial_t\theta + \nabla\cdot(\bu \theta) &=0,
\qquad&& \text{in }\nT^2\times[0,T],\\
\label{bouss_nd_IC}
\bu(\bx,0)&=\bu_0(\bx),\quad\theta(\bx,0)=\theta_0(\bx),
\qquad&& \text{in }\nT^2.
\end{alignat}
\end{subequations}
 It has been shown in \cite{Hou_Li_2005, Chae_2005} that the system $P^0_{\nu,0}$, in the case of whole space $\nR^2$, admits a unique global solution provided the initial data $(\bu_0, \theta_0) \in H^m(\nR^2)\times H^{m}(\nR^2)$ with $m\geq 3$,  $m$ an integer.   In fact, in \cite{Hou_Li_2005}, the authors only required $(\bu_0, \theta_0) \in H^m(\nR^2)\times H^{m-1}(\nR^2)$ with $m\geq 3$.  In \cite{Chae_2005}, it is shown that a Beale-Kato-Majda-type criterion is satisfied for the partially viscous system and therefore the system is globally well-posed.  In \cite{Chae_2005}, it is shown that the problems $P^0_{0,\kappa}$ and $P^0_{\nu,0}$ both admit a unique global solution provided the initial data $(\bu_0, \theta_0) \in H^m(\nR^2)\times H^{m}(\nR^2)$ with $m\geq 3$.  Similar results are shown in \cite{Hou_Li_2005} for $P^0_{0,\kappa}$ but with initial data $(\bu_0, \theta_0) \in H^m(\nR^2)\times H^{m-1}(\nR^2)$.  Global well-posedness results for rough initial data (in Besov spaces) is established in \cite{Hmidi_Keraani_2007}.

We establish in Section \ref{s:P_nu_zero_zero} the global well-posedness of $P^0_{\nu,0}$ in a periodic domain $\nT^2 = [0,1]^2$ assuming weaker initial data, namely,  $\bu_0\in H^1(\nT^2)$, (we always assume $\nabla\cdot\bu_0=0$) and $\theta_0\in L^2(\nT^2)$. Our key idea in proving the uniqueness result is by writing  $\theta = \Delta \xi$, with $\int_{\nT^2}\xi\,dx=0$, for some $\xi$, and then adapting the techniques of Yudovich in \cite{Yudovich_1963} (see also \cite{Majda_Bertozzi_2002}).   We note that the authors in \cite{Danchin_Paicu_2008_French} have shown the global well-posedness results in the whole space under a weaker assumption that $\bu_0,\theta_0 \in L^2(\nR^2)$.  The proof of their main results arise under the Besov and Lorentz space setting and involves the use of Littlewood-Paley decomposition and paradifferential calculus introduced by J. Bony \cite{Bony_1981}.    We include in this study global well-posedness results for the problem $P^0_{\nu,0}$ under a stronger assumption on the initial data, namely $\bu_0\in H^1(\nT^2)$, and $\theta_0 \in L^2(\nT^2)$ but using only elementary techniques in PDEs.  Although this particular result is not an improvement to that of \cite{Danchin_Paicu_2008_French}, we will see that applying our method in the case of anisotropic viscosity, we can establish an improvement to the global well-posedness results established in \cite{Danchin_Paicu_2008}.

In Section \ref{sec:aniso}, we then consider the case where the viscosity $\nu$ occurs in the horizontal direction only.  More precisely, assuming initial vorticity $\omega_0\in \sqrt{L}$ (defined below in \eqref{root_L_norm}), initial temperature (density)  $\theta_0 \in L^\infty(\nT^2)$, and  $\int_{\nT^2}\omega_0\,d\bx= \int_{\nT^2}\theta_0\,d\bx=0$, we establish global well-posedness for the following system, which we denote as $P^0_{\nu_x,0}$:
\begin{subequations}\label{bouss_aniso}
\begin{alignat}{2}
\label{bouss_aniso_mo}
\partial_t\bu + \sum_{j=1}^2\partial_j(u^j  \bu) &=\nu\partial_{1}^2 \bu-\nabla p + \theta \be_2,
\qquad&& \text{in }\nT^2\times[0,T],\\
\label{bouss_aniso_div}
\nabla \cdot \bu &=0,
\qquad&& \text{in }\nT^2\times[0,T],\\
\label{bouss_aniso_den}
\partial_t\theta + \nabla\cdot(\bu \theta) &=0,
\qquad&& \text{in }\nT^2\times[0,T],\\
\label{bouss_aniso_IC}
\bu(\bx,0)&=\bu_0(\bx),\quad\theta(\bx,0)=\theta_0(\bx),
\qquad&& \text{in }\nT^2.
\end{alignat}
\end{subequations}

Recently, in \cite{Danchin_Paicu_2008}, a global well-posedness result for the system $P^0_{\nu_x,0}$ (in the whole space $\nR^2$), under various regularity conditions on initial data, was successfully established.  More precisely, given that $\theta_0\in H^s(\nR^2)\cap L^\infty(\nR^2)$, with $s\in (1/2,1]$, $\bu_0\in H^1(\nR^2)$ and $\omega_0\in L^p(\nR^2)$ for all $2\leq p < \infty$, and such the $\omega_0$ satisfy
\begin{align}\label{root_L_norm}
\|\omega_0\|_{\sqrt{L}}:=\sup_{p\geq2}\frac{\|\omega_0\|_{L^p(\nR^2)}}{\sqrt{p-1}} <\infty,
\end{align}
the Boussinesq system \eqref{bouss_aniso} in the whole space with anisotropic viscosity admits a unique global regular solution.  The condition $\theta_0\in H^s$ with  $s\in(\frac{1}{2},1]$ was needed for establishing uniqueness in \cite{Danchin_Paicu_2008}.  We relax this condition in our current contribution.  We remark again that the main idea is to write $\theta = \triangle\xi$, and then proceed using the techniques of Yudovich \cite{Yudovich_1963} for the 2D incompressible Euler equations to prove uniqueness.  Furthermore, our method uses more elementary tools than those used in \cite{Danchin_Paicu_2008}.   It is worth mentioning that very recently, in \cite{Adhikari_Cao_Wu_2010}, the global regularity of classical solutions to the two-dimensional Boussinesq system in the case of vertical viscosity and vertical thermal diffusion was  established provided an additional extra thermal fractional diffusion of the form $(-\Delta)^\delta$ for $\delta>0$ is added.

Let us denote by $P_{\nu,\kappa}^{\alpha}$ the following system:

\begin{subequations}\label{bouss_v}
\begin{alignat}{2}
\label{bouss_v_mo}
-\alpha^2\triangle\partial_t\bu+\partial_t\bu + \sum_{j=1}^d\partial_j(u^j  \bu) &=-\nabla p + \theta \be_d + \nu\triangle\bu,
\qquad&& \text{in }\nT^d\times[0,T],\\
\label{bouss_v_div}
\nabla \cdot \bu &=0,
\qquad&& \text{in }\nT^d\times[0,T],\\
\label{bouss_v_den}
\partial_t\theta + \nabla\cdot(\bu \theta) &=\kappa\triangle\theta
\qquad&& \text{in }\nT^d\times[0,T],\\
\label{bouss_v_IC}
\bu(\bx,0)=\bu_0(\bx),\quad\theta(\bx,0)&=\theta_0(\bx),
\qquad&& \text{in }\nT^d.
\end{alignat}
\end{subequations}

In Section \ref{s:P_alpha_zero_zero}, we study in dimension $d=2$ the inviscid ($\nu = 0$), Voigt-$\alpha$ (with $\alpha>0$) regularized momentum equation, namely the system $P_{0,0}^{\alpha}$,  and in dimension $d=3$ the system $P_{0,\kappa}^{\alpha}$  (with $\kappa >0$).
%
In the case $d=2$  we establish global well-posedness results for the problem $P^\alpha_{0,0}$ given initial data $\bu_0\in H^2(\nT^2)$ with $\nabla\cdot\bu_0$, and $\theta_0 \in L^2(\nT^2)$.   This result also hold in the easier cases  $\kappa>0$ or $\nu>0$. In the case $d=3$, we require $\kappa>0$ to establish global well-posedness results.  We show that the problem $P^\alpha_{0,\kappa}$ with given initial data $\bu_0\in H^3(\nT^3)$ with $\nabla\cdot\bu_0$, and $\theta_0 \in L^\infty(\nT^3)$ is well-posed globally in time.   This result also hold in the easier case $\nu>0$. Observe that the system $P^\alpha_{0,0}$ formally coincides with the inviscid, non-diffusive Boussinesq equations when  $\alpha = 0$.  This type of inviscid $\alpha$-regularization can be traced back to the work of Cao, {\it et. al.}~\cite{Cao_Lunasin_Titi_2006} who proposed the inviscid simplified Bardina model (studied in \cite{Layton_Lewandowski_2006}) as regularization of the 3D Euler equations. The model consists of the Euler equations with the term $-\alpha^2\Delta\partial_t \bu$ added to the momentum equation.  We refer to this term as the Voigt term, and we refer to equations with this additional term as Voigt-regularized equations.  The reason for this terminology is that if one adds the Voigt term to the Navier-Stokes equations, the resulting equations happen to coincide with equations governing certain visco-elastic fluids known as Kelvin-Voigt fluids, which were first introduced and studied in the context of the 3D Navier-Stokes equations by A.P. Oskolkov \cite{Oskolkov_1973, Oskolkov_1982}, and were studied later in \cite{Kalantarov_1986}.  These equations are known as the Navier-Stokes-Voigt equations.  They were first  proposed in \cite{Cao_Lunasin_Titi_2006} as a regularization for either the Navier-Stokes (for $\nu>0$) or Euler (for $\nu=0$) equations, for small values of the regularization parameter $\alpha$.

 We briefly discuss the merits of the Navier-Stokes-Voigt equations, as they are a special case of the Boussinesq-Voigt equations.  Voigt-regularizations of parabolic equations are a special case of pseudoparabolic equations, that is, equations of the form $Mu_t+Nu=f$, where $M$ and $N$ are (possibly non-linear, or even non-local) operators.  For more about pseudoparabolic equations, see, e.g., \cite{DiBenedetto_Showalter_1981,Peszynska_Showalter_Yi_2009,Showalter_1975_nonlin,Showalter_1975_Sobolev2,Showalter_1972_rep,Carroll_Showalter_1976,Showalter_1970_SG,Showalter_1970_odd,Bohm_1992}.  Whether in the presence of either periodic boundary conditions or physical boundary conditions (under the assumption of the no-slip boundary conditions $u|_{\partial\Omega}=0$), the Navier-Stokes-Voigt equations enjoy global well-posedness, even in three-dimensions), as it has been pointed out in \cite{Cao_Lunasin_Titi_2006}.  The Euler-Voigt equations enjoy global well-posedness in the case of periodic boundary conditions (see, e.g., \cite{Cao_Lunasin_Titi_2006,Larios_Titi_2009}).
It is worth mentioning that the long-term dynamics and estimates for the global attractor, and the Gevrey regularity of solutions on the global attractor, of the three-dimensional Navier-Stokes-Voigt model were studied in \cite{Kalantarov_Titi_2009} and \cite{Kalantarov_Levant_Titi_2009}, respectively.  Moreover, it was shown recently in \cite{Ramos_Titi_2010} that the statistical solutions (i.e., invariant probability measures) of the three-dimensional Navier-Stokes-Voigt equations converge, in a suitable sense, to a corresponding statistical solution (invariant probability measure) of the three-dimensional Navier-Stokes equations.

In the context of numerical computations, the Navier-Stokes-Voigt system appears to have less stiffness than the Navier-Stokes system (see, e.g., \cite{Ebrahimi_Holst_Lunasin_2009, Levant_Ramos_Titi_2009}).  In \cite{Levant_Ramos_Titi_2009}, the statistical properties of the Navier-Stokes-Voigt model were investigated numerically in the context of the Sabra shell phenomenological model of turbulence and were compared with the corresponding Navier-Stokes shell model.

Due to its simplicity, the Voigt $\alpha$-regularization is also well-suited to being applied to other hydrodynamic models, such as the two-dimensional surface quasi-geostrophic equations, demonstrated in \cite{Khouider_Titi_2008}, and the three-dimensional magnetohydrodynamic (MHD) equations, demonstrated in \cite{Larios_Titi_2009}.  See also \cite{Ebrahimi_Holst_Lunasin_2009} for the application of Navier-Stokes-Voigt model in image inpainting. It is also worth mentioning that in the case of the inviscid Burgers equation, $u_t+uu_{x}=0$, this type of regularization leads to $-\alpha^2u_{xxt}+u_t+uu_x=0$, which is the well-known Benjamin-Bona-Mahony equation of water waves \cite{Benjamin_Bona_Mahony_1972}.  One goal of the present work is to lay some of the mathematical groundwork necessary to extend the Voigt regularization to the two-dimensional Boussinesq-equations, for the purpose of simplifying numerical simulations of the solutions to these equations.

It is worth mentioning that all the results reported here are equally valid in the presence of the Coriolis rotation term.

\section{Preliminaries}\label{sec:Pre}
In this section, we introduce some preliminary material and notations which are commonly used in the mathematical study of fluids, in particular in the study of the Navier-Stokes equations (NSE).  For a more detailed discussion of these topics, we refer to \cite{Constantin_Foias_1988,Temam_1995_Fun_Anal,Temam_2001_Th_Num,Foias_Manley_Rosa_Temam_2001}.

Let $\mathcal{F}$ be the set of all trigonometric polynomials with periodic domain $\nT^d:=[0,1]^d$.  We define the space of smooth functions which incorporates the divergence-free and zero-average condition to be
\[\mathcal{V}:=\set{\vphi\in\mathcal{F}^d:\nabla\cdot\vphi=0 \text{ and} \int_{\nT^d}\vphi\;dx=0}.\]
\noindent For the majority of this work, we take $d=2$.

We denote by $L^p$, $W^{s,p}$, $H^s\equiv W^{s,2}$, $C^{0,\gamma}$ the usual Lebesgue, Sobolev, and H\"older spaces, and define $H$ and $V$ to be the closures of $\mathcal{V}$ in $L^2$ and $H^1$ respectively.    We restrict ourselves to finding solutions whose average over the periodic box $\nT^d$ is zero.  Observe from the evolution equation of $\theta$ in the Boussinesq system of equations (as well as the Boussinesq-Voigt system of equations), if we assume that the average $\int_{\nT^d}\theta_0(x) dx=0$, then the average of $\int_{\nT^d}\theta(x,t)\;dx=0$ for all $t\geq 0$, and also$\int_{\nT^d}\bu(x,t)\;dx = 0$ for all $t\geq0$ provided $\int_{\nT^d}\bu_0(x) \;dx=0$.  Therefore, we can work in the spaces defined above consistently.   The notation $V^s:=H^s(\nT^d)\cap V$ will be convenient. When necessary, we write the components of a vector $\by$ as $y^j$, $j=1,2$.  We define the inner products on $H$ and $V$ respectively by
\[(\bu,\bv)=\sum_{i=1}^2\int_{\nT^d} u^iv^i\,dx
\sand
((\bu,\bv))=\sum_{i,j=1}^2\int_{\nT^d}\partial_ju^i\partial_jv^i\,dx,
\]
and the associated norms $|\bu|=(\bu,\bu)^{1/2}$, $\|\bu\|=((\bu,\bu))^{1/2}$.  (We use these notations indiscriminately for both scalars and vectors, which should not be a source of confusion). Note that $((\cdot,\cdot))$ is a norm due to the Poincar\'e inequality, \eqref{poincare}, below.
We denote by $V'$ the dual space of $V$.  The action of $V'$ on $V$ is denoted by $\ip{\cdot}{\cdot}\equiv \ip{\cdot}{\cdot}_{V'}$.  Note that we have the continuous embeddings
\begin{equation}\label{embed}
 V\hookrightarrow H\hookrightarrow V'.
\end{equation}
Moreover, by the Rellich-Kondrachov Compactness Theorem (see, e.g., \cite{Evans_1998,Adams_Fournier_2003}), these embeddings are compact.

Following \cite{Danchin_Paicu_2008}, we define the spaces
\begin{align*}
   \sqrt{L}:=\set{w\big|\|w\|_{\sqrt{L}}<\infty},
\end{align*}
where $\|\cdot\|_{\sqrt{L}}$ is defined by \eqref{root_L_norm}.  This space arises naturally, due to the following inequality, proven in \cite{Lieb_Loss_2001} (see also \cite{Danchin_Paicu_2008}),  which is valid in two dimensions:
\begin{align}\label{CZ_est}
   \|\bw\|_p\leq C\sqrt{p-1}\|\bw\|_{H^1},
\end{align}
for all $\bw\in H^1(\nT^2)$, for any $p\in[2,\infty)$, and where we denote by $\|\cdot\|_p$ the usual $L^{p}$ norm.  Note that clearly $L^\infty\subset\sqrt{L}\subset L^p$ for every $p\in[2,\infty)$.  We also recall the well-known elliptic estimate, due to the Biot-Savart law for an incompressible vector field $\bu$, satisfying $\nabla\cdot \bu=0$, and $\nabla\times\bu = \omega$, by means of the  Calder\'on-Zygmund theory for singular integrals:
\begin{equation}\label{Calderon}
\|\nabla\bu\|_p \leq C p \|\omega\|_p
\end{equation} for any $p\in (1,\infty)$ (see, e.g., \cite{Yudovich_1963}).

Let $Y$ be a Banach space.  We denote by $L^p([0,T],Y)$ (which we also denote as $L^p_TY_x$),  the space of (Bochner) measurable functions $t\mapsto w(t)$, where $w(t)\in Y$ for a.e. $t\in[0,T]$, such that the integral $\int_0^T\|w(t)\|_Y^p\,dt$ is finite (see, e.g., \cite{Adams_Fournier_2003}). A similar convention is used in the notation $C^k([0,T],X)$ for $k$-times differentiable functions of time on the interval $[0,T]$ with values in $Y$. Abusing notation slightly, we write $w(\cdot)$ for the map $t\mapsto w(t)$.  In the same vein, we often write the vector-valued function $w(\cdot,t)$ as $w(t)$ when $w$ is a function of $x$ and $t$.
We denote by $\dot{C}^\infty(\nT^2\times [0,T])$ the set of infinitely differentiable functions in the variable $x$ and $t$ which are periodic in $x$ with $\int_{\nT^2}\vphi(\cdot,t)\;dx=0$.  Similarly, we denote by $\dot{L}^p(\nT^2) = \set{\vphi\in L^p(\nT^2) : \int_{\nT^2}\vphi(x)\;dx=0}$.

We denote by $P_\sigma:\dot{L}^2\maps H$ the Leray-Helmholtz projection
operator and define the Stokes operator $A:=-P_\sigma\triangle$ with
domain $\mathcal{D}(A):=H^2\cap V$.  For $\vphi\in \mathcal{D}(A)$, we have the norm equivalence $|A\vphi|\cong\|\vphi\|_{H^2}$ (see, e.g., \cite{Temam_2001_Th_Num, Constantin_Foias_1988}).  In particular, the Stokes operator $A$ can be extended as a linear operator from $V$ into $V'$ associated with the bilinear form $((\bu,\bv))$,
$$\ip{A\bu}{\bv} = ((\bu,\bv)) \quad \mbox{ for all } \bv\in V.$$
It is known that $A^{-1}:H\maps \mathcal{D}(A) \hookrightarrow H$ is a
positive-definite, self-adjoint, compact operator from $H$ into itself, and therefore it has
an orthonormal basis of positive eigenvectors $\set{\bw_k}_{k=1}^\infty$ in $H$ corresponding to a non-increasing sequence of eigenvalues (see, e.g.,
\cite{Constantin_Foias_1988,Temam_1995_Fun_Anal}).  The vectors  $\set{\bw_k}_{k=1}^\infty$ are also the eigenvectors of $A$.  Since the corresponding eigenvalues of $A^{-1}$ can be ordered in a decreasing order,  we can label the eigenvalues $\lambda_k$ of $A$ so that
$0<\lambda_1\leq\lambda_2\leq\lambda_3\leq\cdots$.
Let $H_n:=\text{span}\set{\bw_1,\ldots,\bw_n}$, and let $P_n:H\maps H_n$ be the $L^2$ orthogonal projection onto $H_n$.   Notice that in the case of periodic boundary conditions in the torus $\nT^2$ we have $\lambda_1=(2\pi)^{-2}$.  We will abuse notation slightly and also use $P_n$ in the scalar case for the corresponding projection onto eigenfunctions of $-\triangle$, but this should not be a source of confusion.
Furthermore, in our case it is known that $A=-\triangle$ due to the periodic boundary conditions (see, e.g., \cite{Constantin_Foias_1988,Temam_1995_Fun_Anal}) and the eigenvectors $\bw_j$ are of the form ${\bf a}_{\bf k}e^{2\pi i{\bf k} \cdot {\bf x}}$, with ${\bf a}_{\bf k}\cdot {\bf k} =0$.

It will be convenient to use the standard notation of the Navier-Stokes bilinear term
\begin{equation}\label{Bdef}
 B(\bw_1,\bw_2):=P_\sigma\sum_{j=1}^d\partial_j(w_1^j\bw_2)
\end{equation}
for $\bw_1,\bw_2\in\mathcal{V}$.  We list some important properties of $B$ which can be found for example in \cite{Constantin_Foias_1988, Foias_Manley_Rosa_Temam_2001, Temam_1995_Fun_Anal, Temam_2001_Th_Num}.

\begin{lemma}\label{B:prop}
The operator $B$ defined in \eqref{Bdef} is a bilinear form which can be extended as a continuous map $B:V\times V\maps V'$ such that
 \begin{equation}
 \ip{B(\bw_1, \bw_2)}{\bw_3} = \int _{\nT^d} (\bw_1\cdot\nabla\bw_2)\cdot\bw_3\;dx,
 \end{equation}
 for every $\bw_1, \bw_2, \bw_3 \in \mathcal{V}$.
 satisfying the following properties:
 \begin{enumerate}[(i)]
  \item For $\bw_1$, $\bw_2$, $\bw_3\in V$,
\begin{equation}\label{B:Alt}
 \ip{B(\bw_1,\bw_2)}{\bw_3}_{V'}=-\ip{B(\bw_1,\bw_3)}{\bw_2}_{V'},
\end{equation}
and therefore
\begin{equation}\label{B:zero}
 \ip{B(\bw_1,\bw_2)}{\bw_2}_{V'}=0.
\end{equation}

\item For $\bw_1$, $\bw_2$, $\bw_3\in V$,
\begin{align}\label{B:ineq1}
   |\ip{B(\bw_1,\bw_2)}{\bw_3}_{V'}|
&\leq C|\bw_1|^{1/2}\|\bw_1\|^{1/2}\|\bw_2\||\bw_3|^{1/2}\|\bw_3\|^{1/2}\\
|\ip{B(\bw_1,\bw_2)}{\bw_3}_{V'}|
&\leq C|\bw_1|^{1/2}\|\bw_1\|^{1/2}|\bw_2|^{1/2}\|\bw_2\|^{1/2}\|\bw_3\|.
\end{align}
 \end{enumerate}
\end{lemma}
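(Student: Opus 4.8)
The plan is to prove everything first for $\bw_1,\bw_2,\bw_3\in\mathcal{V}$ by elementary calculus, and then extend to $V$ by density (recall $V$ is the closure of $\mathcal{V}$ in $H^1$). For the integral identity, note that for trigonometric polynomials $P_\sigma\sum_{j}\partial_j(w_1^j\bw_2)$ is again a trigonometric polynomial; testing it against $\bw_3$ and using that $P_\sigma$ is the $L^2$-orthogonal projection onto $H$ (hence acts as the identity when paired with the divergence-free field $\bw_3$) gives
$\ip{B(\bw_1,\bw_2)}{\bw_3}=\sum_{j}\int_{\nT^d}\partial_j(w_1^j\bw_2)\cdot\bw_3\,dx=\int_{\nT^d}(\bw_1\cdot\nabla\bw_2)\cdot\bw_3\,dx+\int_{\nT^d}(\nabla\cdot\bw_1)(\bw_2\cdot\bw_3)\,dx$,
and the last integral vanishes because $\nabla\cdot\bw_1=0$.

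For part (i) I would integrate by parts once more: since $\nT^d$ has no boundary there are no boundary terms, so $\int_{\nT^d}(\bw_1\cdot\nabla\bw_2)\cdot\bw_3\,dx=-\int_{\nT^d}(\bw_1\cdot\nabla\bw_3)\cdot\bw_2\,dx-\int_{\nT^d}(\nabla\cdot\bw_1)(\bw_2\cdot\bw_3)\,dx$, and again the divergence term drops out, which is exactly \eqref{B:Alt} for $\bw_i\in\mathcal{V}$; setting $\bw_3=\bw_2$ yields \eqref{B:zero}. For part (ii), in dimension $d=2$, I would combine H\"older's inequality with the Ladyzhenskaya (Gagliardo--Nirenberg) inequality $\|\bw\|_4\le C|\bw|^{1/2}\|\bw\|^{1/2}$. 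The first bound follows directly from $|\int_{\nT^2}(\bw_1\cdot\nabla\bw_2)\cdot\bw_3\,dx|\le\|\bw_1\|_4\,\|\nabla\bw_2\|_2\,\|\bw_3\|_4\le C|\bw_1|^{1/2}\|\bw_1\|^{1/2}\|\bw_2\|\,|\bw_3|^{1/2}\|\bw_3\|^{1/2}$. For the second bound I would first use \eqref{B:Alt} to shift the gradient onto $\bw_3$, then estimate $\|\bw_1\|_4\,\|\bw_2\|_4\,\|\nabla\bw_3\|_2$ and apply Ladyzhenskaya to the first two factors.

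Finally, the extension to $V$: the estimates in (ii) exhibit $(\bw_1,\bw_2,\bw_3)\mapsto\ip{B(\bw_1,\bw_2)}{\bw_3}$ as a trilinear form on $\mathcal{V}\times\mathcal{V}\times\mathcal{V}$ bounded with respect to the $V$-norm in each argument, so it extends uniquely to a bounded trilinear form on $V\times V\times V$, equivalently to a continuous bilinear map $B\colon V\times V\maps V'$; since both sides of \eqref{B:Alt} are continuous in this topology, the identities in (i) persist on $V$. I expect no genuine obstacle here: the whole lemma is standard, and the only point deserving a word of care is that the exponents in (ii) are the two-dimensional ones (which is the setting used throughout the paper), whereas the identities in (i) and the integral representation hold in every dimension.
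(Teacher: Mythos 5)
Your proposal is correct and follows exactly the standard route: prove the identity and the antisymmetry on $\mathcal{V}$ by integration by parts (using $\nabla\cdot\bw_1=0$ and the absence of boundary), obtain the bounds in (ii) from H\"older together with the two-dimensional Ladyzhenskaya inequality \eqref{L4_to_H1}, and extend to $V\times V\maps V'$ by density and continuity. The paper does not prove this lemma itself but defers to the standard references \cite{Constantin_Foias_1988, Foias_Manley_Rosa_Temam_2001, Temam_1995_Fun_Anal, Temam_2001_Th_Num}, where the same argument appears, so there is nothing to compare beyond noting that your remark about the exponents in (ii) being specific to $d=2$ is apt.
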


Let us define another very similar bilinear operator motivated by the transport term in the temperature equation.
\begin{equation}\label{B_theta_def}
\mathcal{B}(\bw,\psi):=\sum_{j=1}^d\partial_j(w^j \psi)
\end{equation}
for $\bw\in\mathcal{V}$ and $\psi \in \mathcal{F} $ with $\int_{\nT^d}\psi\;dx=0$.  We have the following similar properties for $\mathcal{B}$ which can be proven easily as in the proof of Lemma \ref{B:prop}.
\begin{lemma} \label{B_theta:prop}
The operator $\mathcal{B}$ defined in \eqref{B_theta_def} is a bilinear form which can be extended as a continuous map $\mathcal{B}:V\times H^1\maps H^{-1}$, such that
\begin{equation}\label{B_theta:def}
\ip{\mathcal{B}(\bw, \psi)}{\phi}_{H^{-1}} = - \int _{\nT^d} \bw\cdot\nabla\phi\;\psi\;dx,
\end{equation}
for every $\bw \in \mathcal{V}$ and $\phi,\psi\in \dot{C}^1$.   Moreover,
\begin{equation}\label{B_theta:Alt}
 \ip{\mathcal{B}(\bw,\psi)}{\phi}_{H^{-1}}=-\ip{\mathcal{B}(\bw,\phi)}{\psi}_{H^{-1}},
\end{equation}
and therefore
\begin{equation}\label{B_theta:zero}
 \ip{\mathcal{B}(\bw,\phi)}{\phi}_{H^{-1}}=0.
\end{equation}
Furthermore, $\mathcal{B}$ is also a bilinear form which can be extended as a continuous map $\mathcal{B}:\cD(A)\times L^2\maps H^{-1}$.
\end{lemma}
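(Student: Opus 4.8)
The plan is to mirror the proof of Lemma~\ref{B:prop} almost line for line; the only new feature is that the single integration by parts built into \eqref{B_theta_def} now moves a derivative onto the scalar test function instead of onto a velocity component. I will carry everything out for $d=2$, and note at the end that $d=3$ is identical once the Ladyzhenskaya exponents are replaced by the corresponding Sobolev ones.

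First I would take $\bw\in\mathcal{V}$ and $\psi,\phi\in\dot C^1$ and integrate \eqref{B_theta_def} by parts against $\phi$; periodicity kills the boundary term and leaves $\ip{\mathcal{B}(\bw,\psi)}{\phi}=-\int_{\nT^d}\psi\,\bw\cdot\nabla\phi\,dx$, which is \eqref{B_theta:def}. To promote the right-hand side to a bounded functional of $\phi\in H^1$, I would estimate, by H\"older with exponents $(4,4,2)$ and the two-dimensional inequality $\norm{f}_{L^4}\le C\norm{f}_{L^2}^{1/2}\norm{f}_{H^1}^{1/2}$,
\begin{equation*}
\abs{\int_{\nT^d}\psi\,\bw\cdot\nabla\phi\,dx}\le\norm{\psi}_{L^4}\norm{\bw}_{L^4}\norm{\nabla\phi}_{L^2}\le C\,\norm{\psi}_{H^1}\,\norm{\bw}_{V}\,\norm{\phi}_{H^1},
\end{equation*}
absorbing the lower-order factors via the Poincar\'e inequality, so that $\norm{\mathcal{B}(\bw,\psi)}_{H^{-1}}\le C\norm{\bw}_V\norm{\psi}_{H^1}$. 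Bilinearity being immediate, and $\mathcal{V}$ being dense in $V$ while $\dot C^\infty$ is dense in $H^1$, this bound extends $\mathcal{B}$ uniquely to a continuous bilinear map $V\times H^1\maps H^{-1}$ for which \eqref{B_theta:def} persists.

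Next, for the antisymmetry I would integrate by parts once more: for $\bw\in\mathcal{V}$ and $\phi,\psi\in\dot C^1$,
\begin{equation*}
\int_{\nT^d}\psi\,\bw\cdot\nabla\phi\,dx+\int_{\nT^d}\phi\,\bw\cdot\nabla\psi\,dx=\int_{\nT^d}\bw\cdot\nabla(\phi\psi)\,dx=-\int_{\nT^d}(\nabla\cdot\bw)\,\phi\psi\,dx=0,
\end{equation*}
which together with \eqref{B_theta:def} gives \eqref{B_theta:Alt}, and then \eqref{B_theta:zero} upon setting $\phi=\psi$; both extend to $V\times H^1$ by the continuity just established. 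For the final assertion I would use, for $\bw\in\cD(A)=H^2\cap V$ and $\phi\in H^1$, the embedding $H^2(\nT^2)\hookrightarrow L^\infty(\nT^2)$ and the norm equivalence $\abs{A\bw}\cong\norm{\bw}_{H^2}$ to bound $\norm{\bw\cdot\nabla\phi}_{L^2}\le\norm{\bw}_{L^\infty}\norm{\nabla\phi}_{L^2}\le C\abs{A\bw}\norm{\phi}_{H^1}$; then for $\psi\in\dot C^\infty$, H\"older gives $\abs{\ip{\mathcal{B}(\bw,\psi)}{\phi}}\le\norm{\psi}_{L^2}\norm{\bw\cdot\nabla\phi}_{L^2}\le C\abs{A\bw}\norm{\psi}_{L^2}\norm{\phi}_{H^1}$, and density of $\dot C^\infty$ in $L^2$ finishes the extension to $\cD(A)\times L^2\maps H^{-1}$.

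There is no real obstacle here: the argument is a routine transcription of Lemma~\ref{B:prop}. The one point deserving a moment's care is the density/limiting step—verifying that in passing to the limit in \eqref{B_theta:def}--\eqref{B_theta:zero} the zero-average constraints on $\bw,\psi,\phi$ are preserved, which is clear since mean-zero trigonometric polynomials (divergence-free ones for $\bw$) are dense in each of $V$, $\cD(A)$, $H^1$ and $L^2$ in the relevant norms.
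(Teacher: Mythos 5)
Your proposal is correct and is exactly the argument the paper has in mind: the paper omits the proof of Lemma~\ref{B_theta:prop}, deferring to the standard estimates for the Navier--Stokes trilinear form as in Lemma~\ref{B:prop}, and your integration by parts, the $L^4$--$L^4$--$L^2$ H\"older/Ladyzhenskaya bound for the $V\times H^1\maps H^{-1}$ continuity, the product-rule/divergence-free computation for \eqref{B_theta:Alt}--\eqref{B_theta:zero}, and the $H^2\hookrightarrow L^\infty$ embedding for the $\cD(A)\times L^2\maps H^{-1}$ extension are precisely those standard steps. No gaps; the density/limiting remark at the end is the right point to flag and is handled correctly.
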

Here and below, $C, C_j$, etc. denote generic constants which may change from line to line.  $C_\alpha,C(\cdots)$, etc.  denote generic  constants which depend only upon the indicated parameters.  $K, K_j$, etc. denote constants which depend on norms of initial data, and also may vary from line to line.  Next, we recall that for an integrable function $f$ such that $\int_{\nT^2} f\;dx=0$, we have in two dimensions,
\begin{equation}\label{L4_to_H1}
\|f\|_{L^4}\leq|f|^{1/2}\|f\|^{1/2}.
\end{equation}
We also recall Agmon's inequality in two dimensions (see, e.g., \cite{Agmon_1965, Constantin_Foias_1988}).  For $\bw\in\mathcal{D}(A)$ we have
\begin{equation}\label{Agmon1/2}
 \|\bw\|_{L^\infty} \leq C|\bw|^{1/2}|A\bw|^{1/2}\:\:.
\end{equation}
 Furthermore, for all $\vphi\in V$, we have
the Poincar\'e inequality
\begin{equation}\label{poincare}
   \|\vphi\|_{L^2}\leq\lambda_1^{-1/2} \|\nabla\vphi\|_{L^2}.
\end{equation}
We will also make use of the following  inequality, valid in two dimensions, which is based on the Br\'ezis-Gallouet inequality, and which we prove in the appendix.  For every $\epsilon>0$, sufficiently small, and $\bw\in H^2(\nT^2)$,
\begin{align}\label{brezis}
   \|\bw\|_{L^\infty}\leq C\pnt{\|\bw\|\epsilon^{-1/4}+ |A\bw|e^{-1/\epsilon^{1/4}}},
\end{align}
where $C$ is independent of $\epsilon$.
Finally, we note a result of deRham \cite{Wang_1993, Temam_2001_Th_Num}, which states that if $g$ is a locally integrable function (or more generally, a distribution), we have
\begin{equation}\label{deRham}
 g =\nabla p \text{ for some distribution $p$ iff } \ip{g}{\bw}=0\quad\text{for all }
  \bw\in\mathcal{V},
\end{equation}
which one uses to recover the pressure.
\section{Global Well-posedness Results for the Viscous and Non-diffusive Boussinesq Equations. \texorpdfstring{($P^0_{\nu,0}$)}{}} \label{s:P_nu_zero_zero}

Let us first define the weak formulation of problem $P_{\nu,\kappa}^0$ in $\nT^2 \times [0,T]$.  By choosing a suitable phase space which incorporates the divergence free condition of the Boussinesq equations, we can eliminate the pressure from the equation, as is standard in the theory of the Navier-Stokes equations.  Consider the scalar test functions $\vphi(x,t) \in  \dot{C}^\infty(\nT^2 \times [0,T])$, such that $\vphi(x,T) =0$; and the vector test functions $\Phi(x,t)\in  [ \dot{C}^\infty(\nT^2 \times [0,T])]^2$ such that $\nabla\cdot\Phi(\cdot,t) =0$ and $\Phi(x,T) =0$.  Then the weak formulation of problem $P_{\nu,\kappa}^0$ in $\nT^2 \times [0,T]$ (and similarly of problem $P_{\nu,0}^0$, when $\kappa=0$, in $\nT^2 \times [0,T]$) is written as follows:

\begin{subequations}\label{bouss_wk}
\begin{align}
&\quad\notag
-\int_0^T(\bu(s), \Phi'(s))\,ds
+\nu\int_0^T((\bu(s), \Phi(s)  ))\,ds
+ \sum_{j=1}^2\int_0^T(u_j\bu,\partial_j\Phi) \,ds
 \\&\label{bouss_wk_mo}
=
(\bu_0(x),\Phi(x,0) )
+\int_0^T(\theta(s)\be_2,\Phi(s))\,ds
\\ & \notag \\
&\quad\notag
-\int_0^T(\theta(s),\vphi'(s))\,ds
+\label{bouss_wk_den}
\int_0^T(\bu\theta,\nabla\vphi)\,ds
+\kappa\int_0^T((\theta(s), \vphi(s)  ))\,ds
\\&=(\theta_0(x),\vphi(x,0)).
\end{align}
\end{subequations}

\begin{remark}\label{test_fcns_are_trig_polys}
   Note that it will become clear later that \eqref{bouss_wk} will hold for a larger class of test functions, and consequently it will be sufficient to consider only test functions of the form
\begin{subequations}\label{test_fcns}
\begin{align}
\label{test_vect}
\Phi(x,t) &= \Gamma_{\bm}(t) e^{2\pi i\bm\cdot \bx},
\text{ with } \Gamma_{\bm} \in [C^\infty([0,T])]^2\text{ and }\bm\cdot\Gamma_{\bm}(t)=0,
\intertext{and}
\label{test_scal}
\vphi(x,t) &= \chi_{\bm}(t) e^{2\pi i\bm\cdot \bx},
\text{ with }\chi_{\bm} \in C^\infty([0,T]),
\end{align}
\end{subequations}
for $\bm\in(\nZ\backslash \{0\})^2$, since such functions form a basis for the corresponding larger spaces of test functions.  
\end{remark}
In the two-dimensional case, the global well-posedness of system $P_{\nu,\kappa}^0$ in \eqref{bouss}, that is, in the case $\kappa>0$, $\nu>0$, is well-known, and can be proved in a similar manner following the work of \cite{Foias_Manley_Temam_1987} (see also \cite{Temam_1997_IDDS,Cannon_DiBenedetto_1980}).
We have the following existence and uniqueness results for the system $P_{\nu,\kappa}^0$, which will be used to prove the existence of weak solutions for the system $P_{\nu,0}^0$.
From here on, we only work on spaces of functions which are periodic and with spatial average zero.  Therefore, to simplify notation, we write $\dot{L}^2$ as $L^2$, $\dot{C}^k$ as $C^k$, etc.

\begin{theorem}\label{thm:diffusion}
   Let $T>0$, $\nu>0$ be fixed but arbitrary. Then, the following results hold:
   \begin{enumerate}[(i)]
   \item  If $\bu_0\in H$, $\theta_0\in L^2$ then for each $\kappa>0$, \eqref{bouss} has a unique solution $(\bu_\kappa,\theta_\kappa)$ in the sense of \eqref{bouss_wk} such that
   $\bu_\kappa\in C([0,T],H)\cap L^2([0,T],V)$,
   $\theta_\kappa\in C_w([0,T],L^2)$.  Furthermore, there exists a constant $K_0 > 0$ independent of $\kappa$ such that the following bounds hold:
   $\|\bu_\kappa\|_{L^2([0,T],V)}\leq K_0$,
   $\|\bu_\kappa\|_{L^\infty([0,T],H)}\leq K_0$,
   $\|\frac{d}{dt}\bu_\kappa\|_{L^2([0,T],V')}\leq K_0$,
   $\|\theta_\kappa\|_{L^\infty([0,T],L^2)} \leq |\theta_0|$, $\|\frac{d}{dt}\theta_\kappa\|_{L^2([0,T],H^{-2})} \leq K_0$ and
   $\sqrt{\kappa}\|\theta_\kappa\|_{L^2([0,T],H^1)}\leq K_0$.
   \item If the initial data $\bu_0\in V$ and $\theta_0\in L^2$,  then the solution $u_\kappa\in C([0,T], V)\cap L^2([0,T],\cD(A))$  and we also have the following bounds:
   $\|\bu_\kappa\|_{L^2([0,T],\mathcal{D}(A))}\leq K_0$,
   $\|\bu_\kappa\|_{L^\infty([0,T],V)}\leq K_0$,  $\|\frac{d}{dt}\bu_\kappa\|_{L^2([0,T],H)} \leq K_0$ and $\|\frac{d}{dt}\theta_\kappa\|_{L^2([0,T],H^{-1})} \leq K_0$.
   \item   If $\theta_0\in L^\infty$ and $\bu_0\in H$, then
   $\|\theta_\kappa\|_{L^\infty([0,T],L^\infty)}\leq \|\theta_0\|_\infty$.
   \item  If $u_0 \in H^3$ and $\theta_0 \in H^2$ then  for each $\kappa>0$, \eqref{bouss} has a unique solution $u_\kappa\in C([0,T],H^3)\cap L^2([0,T],H^4)$ and  $\theta_\kappa \in C([0,T],H^2)\cap L^2([0,T],H^3)$.
\end{enumerate}
\end{theorem}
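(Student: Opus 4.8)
The plan is to construct solutions by a Galerkin scheme — solving the finite-dimensional ODE system for $(\bu_n,\theta_n)\in H_n\times H_n$ obtained by applying $P_n$ to \eqref{bouss} (global solvability of this system being immediate from the Picard theorem together with the a priori bounds below) — then to derive a priori bounds that are uniform in $n$ and, for the constants claimed in (i), uniform in $\kappa$, to pass to the limit by weak compactness together with the Aubin--Lions lemma, to recover $p$ from the de Rham characterization \eqref{deRham}, and finally to prove uniqueness by an energy estimate for the difference of two solutions. Parts (ii)--(iv) are then obtained by propagating successively higher Sobolev norms within the same scheme.

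\emph{A priori estimates and compactness.} Testing the projected $\theta$-equation with $\theta_n$ and using \eqref{B_theta:zero} gives $\tfrac12\tfrac{d}{dt}|\theta_n|^2+\kappa\|\nabla\theta_n\|^2=0$, whence $|\theta_\kappa(t)|\le|\theta_0|$ and $\sqrt\kappa\,\|\theta_\kappa\|_{L^2([0,T],H^1)}\le K_0$ uniformly in $\kappa$. Testing the projected $\bu$-equation with $\bu_n$ and using \eqref{B:zero} gives $\tfrac12\tfrac{d}{dt}|\bu_n|^2+\nu\|\bu_n\|^2=(\theta_n\be_2,\bu_n)\le|\theta_0|\,|\bu_n|$; bounding $|\bu_n|\le\lambda_1^{-1/2}\|\bu_n\|$ by \eqref{poincare} and absorbing yields $\|\bu_\kappa\|_{L^\infty([0,T],H)}+\|\bu_\kappa\|_{L^2([0,T],V)}\le K_0$ with $K_0=K_0(|\bu_0|,|\theta_0|,\nu,\lambda_1,T)$ only. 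Returning to the equations and using Lemma \ref{B:prop}(ii) to estimate $\|B(\bu_n,\bu_n)\|_{V'}\le C|\bu_n|\,\|\bu_n\|$, Lemma \ref{B_theta:prop} together with $\|\cdot\|_{L^4}\le C\|\cdot\|_{H^1}$ to estimate $\|\mathcal B(\bu_n,\theta_n)\|_{H^{-2}}\le C\|\bu_n\|\,|\theta_n|$, and $\|\kappa\triangle\theta_n\|_{H^{-2}}\le C\kappa|\theta_n|$, we obtain $\|\partial_t\bu_\kappa\|_{L^2([0,T],V')}\le K_0$ and $\|\partial_t\theta_\kappa\|_{L^2([0,T],H^{-2})}\le K_0$. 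Since $V\hookrightarrow\hookrightarrow H$ and $L^2\hookrightarrow\hookrightarrow H^{-1}\hookrightarrow H^{-2}$, Aubin--Lions gives (along a subsequence) $\bu_n\to\bu$ strongly in $L^2([0,T],H)$ and $\theta_n\to\theta$ strongly in $L^2([0,T],H^{-1})$ and weak-$*$ in $L^\infty([0,T],L^2)$, which is more than enough to pass to the limit in each term of \eqref{bouss_wk} (by Remark \ref{test_fcns_are_trig_polys} it suffices to test against functions lying in finitely many modes). Finally $\bu\in C([0,T],H)$ by the Lions--Magenes lemma and $\theta\in C_w([0,T],L^2)$ since $\theta\in L^\infty_TL^2$ with $\partial_t\theta\in L^2_TH^{-2}$.

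\emph{Uniqueness.} Let $(\bu_i,\theta_i)$, $i=1,2$, solve \eqref{bouss_wk} with the same data, $\delta\bu=\bu_1-\bu_2$, $\delta\theta=\theta_1-\theta_2$. Writing $B(\bu_1,\bu_1)-B(\bu_2,\bu_2)=B(\delta\bu,\bu_1)+B(\bu_2,\delta\bu)$ and likewise for $\mathcal B$, and using \eqref{B:Alt}--\eqref{B:zero}, the energy identities read $\tfrac12\tfrac{d}{dt}|\delta\bu|^2+\nu\|\delta\bu\|^2=\ip{B(\delta\bu,\delta\bu)}{\bu_1}+(\delta\theta\be_2,\delta\bu)$ and $\tfrac12\tfrac{d}{dt}|\delta\theta|^2+\kappa\|\nabla\delta\theta\|^2=-\ip{\mathcal B(\delta\bu,\theta_1)}{\delta\theta}$. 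By Lemma \ref{B:prop}(ii), $|\ip{B(\delta\bu,\delta\bu)}{\bu_1}|\le C|\delta\bu|\,\|\delta\bu\|\,\|\bu_1\|\le\tfrac\nu4\|\delta\bu\|^2+\tfrac C\nu|\delta\bu|^2\|\bu_1\|^2$; the coupling term is $\le|\delta\theta|\,|\delta\bu|\le\tfrac{\kappa\lambda_1}4|\delta\theta|^2+C|\delta\bu|^2$, the first piece absorbed into $\kappa\|\nabla\delta\theta\|^2$ via \eqref{poincare}; and the decisive term, using $\|\delta\bu\|_{L^4}\le C|\delta\bu|^{1/2}\|\delta\bu\|^{1/2}$ and \eqref{L4_to_H1} for $\theta_1$, is $|\ip{\mathcal B(\delta\bu,\theta_1)}{\delta\theta}|=\big|\int_{\nT^2}\delta\bu\cdot\nabla\delta\theta\,\theta_1\,dx\big|\le\|\delta\bu\|_{L^4}\|\theta_1\|_{L^4}\|\nabla\delta\theta\|\le\tfrac\kappa2\|\nabla\delta\theta\|^2+\tfrac\nu4\|\delta\bu\|^2+C_{\nu,\kappa}|\delta\bu|^2\|\theta_1\|_{L^4}^4$, where $\|\theta_1\|_{L^4}^4\le|\theta_0|^2\|\theta_1\|_{H^1}^2\in L^1([0,T])$ because $\theta_1\in L^2_TH^1$ — this is the only place $\kappa>0$ is used. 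Adding the two identities and absorbing, $\tfrac{d}{dt}\big(|\delta\bu|^2+|\delta\theta|^2\big)\le C\big(1+\|\bu_1\|^2+\|\theta_1\|_{L^4}^4\big)\big(|\delta\bu|^2+|\delta\theta|^2\big)$ with $L^1$-in-time coefficient, so Gr\"onwall forces $\delta\bu\equiv\delta\theta\equiv0$. (One may equivalently run this estimate on $|\delta\bu|^2+\|\delta\theta\|_{H^{-1}}^2$, i.e.\ on $\delta\xi$ with $\delta\theta=\triangle\delta\xi$; that reformulation is unavoidable for the $\kappa\to0$ analysis carried out later, but not needed here.)

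\emph{Higher regularity, the maximum principle, and the main difficulty.} For (ii), testing the $\bu_n$-equation with $A\bu_n$ and bounding $|\ip{B(\bu_n,\bu_n)}{A\bu_n}|\le C|\bu_n|^{1/2}\|\bu_n\|\,|A\bu_n|^{3/2}$ (two-dimensional Ladyzhenskaya) by $\tfrac\nu2|A\bu_n|^2+C|\bu_n|^2\|\bu_n\|^4$ closes by Gr\"onwall using the bound from (i), giving $\bu_\kappa\in L^\infty_TV\cap L^2_T\mathcal D(A)$, and then $\partial_t\bu_\kappa\in L^2_TH$, $\partial_t\theta_\kappa\in L^2_TH^{-1}$ read off from the equations (using $\mathcal B:\cD(A)\times L^2\to H^{-1}$ from Lemma \ref{B_theta:prop}). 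For (iv) one iterates, testing with $A^3\bu_n$ and $A\theta_n$ and using the two-dimensional embeddings $H^1\hookrightarrow L^4$, $H^2\hookrightarrow L^\infty$ and Agmon's inequality \eqref{Agmon1/2} to handle the transport commutators $\triangle(\bu\cdot\nabla\theta)-\bu\cdot\nabla\triangle\theta$, whose worst term needs $\|\nabla\bu\|_{L^\infty}\le C\|\bu\|_{H^3}$, again closing by Gr\"onwall. For (iii), the bound $\|\theta_\kappa(t)\|_\infty\le\|\theta_0\|_\infty$ follows (for the limit solution) from the transport--diffusion structure $\partial_t\theta+\bu\cdot\nabla\theta=\kappa\triangle\theta$ with $\nabla\cdot\bu=0$: multiplying by $|\theta|^{p-2}\theta$ annihilates the drift and dissipates, so $\|\theta(t)\|_p\le\|\theta_0\|_p\le\|\theta_0\|_\infty$, and $p\to\infty$. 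The main obstacle is the uniqueness in (i): since $\theta$ belongs only to $L^2$ in space, the transport nonlinearity in the $\delta\theta$-equation resists a crude $L^2$ energy estimate, and one must spend both the $\kappa>0$ thermal dissipation and the two-dimensional interpolation \eqref{L4_to_H1}; one must likewise take care to use only the velocity energy balance and the transport $L^2$/$L^\infty$ bounds — none of which feel the thermal dissipation — when asserting the $\kappa$-independent constants of (i).
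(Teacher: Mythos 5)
Your argument is correct in substance, but it is worth noting that the paper does not actually prove this theorem: its ``proof'' consists of citing the classical literature (Cannon--DiBenedetto, Foias--Manley--Temam, Temam) for parts (i)--(iii), deferring the explicit $\kappa$-uniform bounds to the places in later proofs where they are needed, and asserting part (iv) as a routine energy/Gr\"onwall exercise. Your Galerkin construction, the $\kappa$-uniform estimates, the Aubin--Lions passage to the limit, and the uniqueness argument that spends the thermal dissipation on the term $\int\delta\bu\cdot\nabla\delta\theta\,\theta_1\,dx$ via $\|\theta_1\|_{L^4}^4\le|\theta_0|^2\|\theta_1\|_{H^1}^2\in L^1_T$ are all sound, and you correctly isolate why $\kappa>0$ is essential there (the paper must resort to the Yudovich-type argument with $\xi=\triangle^{-1}\theta$ precisely because this mechanism disappears at $\kappa=0$). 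Two points where your choices differ from what the paper does when it carries out the analogous computations: for part (ii) the paper exploits the 2D periodic identity $(B(\bu,\bu),A\bu)=0$, which makes the $V$-estimate close linearly without Ladyzhenskaya interpolation (your route works too, just with a worse Gr\"onwall coefficient); and for part (iii) the paper uses the Hopf--Stampacchia truncation, testing with $(\theta_\kappa-\|\theta_0\|_{L^\infty})^{+}\in H^1$, rather than $L^p$ norms with $p\to\infty$. The latter is a genuine (if standard and repairable) gap in your sketch: at the regularity available for $\kappa>0$ weak solutions, $|\theta|^{p-2}\theta$ is not an admissible $H^1$ test function unless $\theta$ is already known to be bounded, so one must either truncate the power or regularize; the Stampacchia route sidesteps this entirely. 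Finally, your part (iv) sketch says ``test with $A\theta_n$'' where the claimed $C([0,T],H^2)$ regularity for $\theta$ requires testing with $A^2\theta_n$; the surrounding discussion of the commutator $\triangle(\bu\cdot\nabla\theta)-\bu\cdot\nabla\triangle\theta$ indicates you intended the latter.
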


\begin{proof}
Parts (i) and (ii) are essentially proven in \cite{Cannon_DiBenedetto_1980,Foias_Manley_Temam_1987, Temam_1997_IDDS} following the classical theory of Navier-Stokes equations.  The uniform bounds in part (ii) will be established explicitly in the later proofs when called for.  Part (iii) can be proven using maximum principle and is proven for example in \cite{Cannon_DiBenedetto_1980,Temam_1997_IDDS}. An explicit proof of this theorem will also be provided below.  Part (iv) can be proved using basic energy estimates and Gr\"onwall's inequality again following the classical theory of the Navier-Stokes equations.
\end{proof}
For the current study, we now define what we mean by weak solutions and  strong solutions for the viscous non-diffusive Boussinesq equations  ($P^0_{\nu,0}$).  We then state and prove our main results.

\begin{definition}[Weak solution]\label{def:weak}
   Let $T>0$.  Suppose $\bu_0\in H$ and $\theta_0\in L^2$.  We say that $(\bu,\theta)$ is a \textit{weak solution} to $P_{\nu,0}^0$ (that is, \eqref{bouss_nd} with $\kappa=0$) on the interval $[0,T]$, if $(\bu,\theta)$ satisfies
   the weak formulation \eqref{bouss_wk} (with $\kappa=0$), and
   $\bu\in L^2([0,T],V)\cap C([0,T],H)$,  $\pd{\bu}{t}\in L^1([0,T],V')$, with
   $\theta\in  C([0,T],L^2)$ and $\pd{\theta}{t}\in L^1([0,T],H^{-2})$.
 \end{definition}
\begin{definition}[Strong solution]\label{def:strong}
   Let $T>0$.  Suppose $\theta_0\in L^2$ and $\bu_0\in V$.  We say that $(\bu,\theta)$ is a \textit{strong solution} to $P_{\nu,0}^0$ (that is, \eqref{bouss_nd} with $\kappa=0$) on the interval $[0,T]$, if it is a weak solution in the sense of Definition \ref{def:weak}, and furthermore, $\bu\in L^2([0,T],\cD(A))\cap C([0,T],V)$, $\pd{\bu}{t}\in L^1([0,T],H)$, and
   $\pd{\theta}{t}\in L^1([0,T],H^{-1})$.
\end{definition}


We now state and prove our main results regarding global existence of weak and strong solutions to problem $P_{\nu,0}^0$.

\begin{theorem}[Existence of weak solutions]\label{exist_weak_visc}
   Let $T>0$ be given.  Let $\bu_0\in H$ and $\theta_0\in L^2$.  Then there exists a weak solution of \eqref{bouss} on the interval $[0,T]$.  Furthermore, system \eqref{bouss_wk} with $\kappa=0$ is equivalent to the functional form
  \begin{subequations}\label{e:functional_nu}
  \begin{align}\label{e:funct_1}
  \pd{\bu}{t} + \nu A\bu + B(\bu,\bu) &= P_\sigma(\theta\be_2)\quad \mbox{in}\quad L^2([0,T], V')\quad
  \mbox{and}\\\label{e:funct_2}
   \pd{\theta}{t} + \mathcal{B}(\bu,\theta) &= 0\quad \mbox{in}\quad L^2([0,T], H^{-2}).
   \end{align}
   \end{subequations}
   Moreover, if we assume $\theta_0\in L^\infty$, then  $\theta\in L^\infty([0,T], L^\infty)$.
\end{theorem}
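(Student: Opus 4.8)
The plan is to obtain the weak solution as the vanishing-diffusivity limit $\kappa\to 0^+$ of the solutions $(\bu_\kappa,\theta_\kappa)$ supplied by Theorem \ref{thm:diffusion}, which is exactly why that theorem was stated with bounds \emph{independent of $\kappa$}. From parts (i) and (iii) there, $\bu_\kappa$ is bounded in $L^\infty([0,T],H)\cap L^2([0,T],V)$ with $\pd{\bu_\kappa}{t}$ bounded in $L^2([0,T],V')$; $\theta_\kappa$ is bounded in $L^\infty([0,T],L^2)$ (and, when $\theta_0\in L^\infty$, in $L^\infty([0,T]\times\nT^2)$) with $\pd{\theta_\kappa}{t}$ bounded in $L^2([0,T],H^{-2})$; and $\sqrt{\kappa}\,\|\theta_\kappa\|_{L^2([0,T],H^1)}\le K_0$. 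Banach--Alaoglu then yields a subsequence along which $\bu_\kappa\rightharpoonup\bu$ weakly in $L^2([0,T],V)$ and weak-$*$ in $L^\infty([0,T],H)$, $\pd{\bu_\kappa}{t}\rightharpoonup\pd{\bu}{t}$ weakly in $L^2([0,T],V')$, and $\theta_\kappa\rightharpoonup\theta$ weak-$*$ in $L^\infty([0,T],L^2)$; since the unit ball of $L^\infty([0,T]\times\nT^2)$ is weak-$*$ closed, in the bounded case $\|\theta\|_{L^\infty([0,T]\times\nT^2)}\le\|\theta_0\|_\infty$, which is the last assertion of the theorem. Because $V$ is compactly embedded in $H$ and $H$ in $V'$, the Aubin--Lions--Simon lemma upgrades this to $\bu_\kappa\to\bu$ strongly in $L^2([0,T],H)$; similarly, since $L^2$ is compactly embedded in $H^{-1}$ and $H^{-1}$ in $H^{-2}$, one gets $\theta_\kappa\to\theta$ strongly in $C([0,T],H^{-1})$, hence in particular $\theta_\kappa\rightharpoonup\theta$ weakly in $L^2([0,T],L^2)$ and $\theta(0)=\theta_0$.

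Next I would pass to the limit in the weak formulation \eqref{bouss_wk} (written with the given $\kappa$) against the smooth test functions of Remark \ref{test_fcns_are_trig_polys}, all of whose spatial derivatives are bounded. The linear terms converge by the weak and weak-$*$ convergences above; the artificial diffusion term disappears because $\big|\kappa\int_0^T((\theta_\kappa(s),\vphi(s)))\,ds\big|\le\sqrt{\kappa}\,\big(\sqrt{\kappa}\,\|\theta_\kappa\|_{L^2([0,T],H^1)}\big)\,\|\vphi\|_{L^2([0,T],H^1)}\to 0$. For the nonlinear terms, the strong $L^2([0,T],L^2)$ convergence of $\bu_\kappa$ together with the boundedness of $\partial_j\Phi$ gives $\sum_j(u_\kappa^j\bu_\kappa,\partial_j\Phi)\to\sum_j(u^j\bu,\partial_j\Phi)$ in $L^1([0,T])$, while $(\bu_\kappa\theta_\kappa,\nabla\vphi)=\int_{\nT^2}(\bu_\kappa\cdot\nabla\vphi)\,\theta_\kappa\,dx$ converges to $(\bu\theta,\nabla\vphi)$ since $\bu_\kappa\cdot\nabla\vphi\to\bu\cdot\nabla\vphi$ strongly in $L^2([0,T],L^2)$ and $\theta_\kappa\rightharpoonup\theta$ weakly there. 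Thus $(\bu,\theta)$ satisfies \eqref{bouss_wk} with $\kappa=0$. The equivalence with the functional form \eqref{e:functional_nu} is then bookkeeping: testing \eqref{bouss_wk} against the Fourier test functions and integrating by parts in time identifies $\pd{\bu}{t}=-\nu A\bu-B(\bu,\bu)+P_\sigma(\theta\be_2)$ in $V'$ and $\pd{\theta}{t}=-\mathcal{B}(\bu,\theta)$ in $H^{-2}$, where Lemmas \ref{B:prop} and \ref{B_theta:prop} together with $\bu\in L^\infty([0,T],H)\cap L^2([0,T],V)$, $\theta\in L^\infty([0,T],L^2)$ and the two-dimensional Sobolev embeddings show the right-hand sides lie in $L^2([0,T],V')$ and $L^2([0,T],H^{-2})$ respectively (in particular $\pd{\bu}{t}\in L^2([0,T],V')$); the pressure is recovered a posteriori via the de Rham criterion \eqref{deRham}. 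Conversely, the functional form plus attainment of the initial data reproduces \eqref{bouss_wk}.

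Finally I would verify the continuity required by Definition \ref{def:weak}. Since $\bu\in L^2([0,T],V)$ and $\pd{\bu}{t}\in L^2([0,T],V')$, the Lions--Magenes lemma gives $\bu\in C([0,T],H)$ (together with $\frac12\frac{d}{dt}|\bu|^2=\ip{\pd{\bu}{t}}{\bu}$). For $\theta$, the inclusions $\theta\in L^\infty([0,T],L^2)$ and $\pd{\theta}{t}\in L^2([0,T],H^{-2})$ give $\theta\in C_w([0,T],L^2)$; upgrading to $\theta\in C([0,T],L^2)$ amounts to showing $t\mapsto|\theta(t)|$ is continuous, which follows once $|\theta(t)|\equiv|\theta_0|$. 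The latter is obtained from the transport structure: $\theta$ solves $\partial_t\theta+\bu\cdot\nabla\theta=0$ in the sense of distributions with $\nabla\cdot\bu=0$ and $\bu\in L^2([0,T],H^1(\nT^2))$, the DiPerna--Lions setting, so $\theta$ is a renormalized solution and $\frac{d}{dt}\int_{\nT^2}\theta^2\,dx=0$ in $\mathcal{D}'(0,T)$; combined with $|\theta(t)|\le|\theta_0|$ (weak-$*$ lower semicontinuity) and $|\theta(t)|\ge|\theta_0|$ as $t\to 0^+$ (weak lower semicontinuity and $\theta(0)=\theta_0$) this forces $|\theta(t)|=|\theta_0|$ for all $t$, and a constant $L^2$-norm together with weak continuity gives strong continuity. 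I expect this last point to be the main obstacle: the regularity $\theta\in L^\infty([0,T],L^2)$, $\partial_t\theta\in L^2([0,T],H^{-2})$ is too weak to justify the energy identity for $\theta$ by a direct duality pairing, so the renormalization (commutator) argument — which on $\nT^2$ reduces to a short mollification estimate exploiting $\bu\in L^2([0,T],H^1)$ — is essential, whereas everything else is a routine Navier--Stokes-type compactness argument.
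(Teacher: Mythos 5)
Your proposal is correct and follows essentially the same route as the paper: treat $\kappa>0$ as a vanishing regularization, use the $\kappa$-uniform bounds of Theorem \ref{thm:diffusion} with Banach--Alaoglu and Aubin compactness to pass to the limit in the weak formulation (splitting the transport nonlinearity into a strongly-converging and a weakly-converging factor), obtain $\bu\in C([0,T],H)$ from the Lions--Magenes lemma, upgrade $\theta\in C_w([0,T],L^2)$ to $C([0,T],L^2)$ via the DiPerna--Lions renormalization theory, and get the $L^\infty$ bound from the maximum principle for $\theta_\kappa$ plus weak-$*$ closedness. The only cosmetic differences are that the paper establishes $C_w([0,T],L^2)$ by an explicit Arzel\`a--Ascoli equicontinuity argument on $(\theta_\kappa,\phi)$ and re-derives the $L^\infty$ bound by the Hopf--Stampacchia truncation for completeness, whereas you invoke the corresponding abstract facts directly.
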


\begin{proof}
Our method of proof involves passing to the limit of the weak solution of  \eqref{bouss} as $\kappa\maps 0$, that is, we consider $\kappa>0$ to be a regularization parameter to system \eqref{bouss_nd}.   Without loss of generality, we can assume $0<\kappa < 1$.  In accordance with Remark \ref{test_fcns_are_trig_polys} and Definition \ref{def:weak}, we only consider test functions of the form \eqref{test_fcns}.

We will show that the weak formulation
\begin{subequations}\label{bouss_kappa_weak}
\begin{align}
&\quad\notag
-\int_0^T(\bu_\kappa(s), \Gamma_{\bm}'(s) e^{2\pi i\bm\cdot \bx})\,ds
+\nu\int_0^T((\bu_\kappa(s), \Gamma_{\bm}(s) e^{2\pi i\bm\cdot \bx}      ))\,ds
 \\&\quad \notag
 +\sum_{j=1}^2\int_0^T(u^j_\kappa\bu_\kappa, \Gamma_{\bm}(s) \partial_j e^{2\pi i\bm\cdot \bx} )\,ds
\\&
\label{bouss_kappa_wk_mo}=
(\bu_{0},\Gamma_{\bm}(0) e^{2\pi i\bm\cdot \bx} )
+\int_0^T(\theta_\kappa(s)\be_2,\Gamma_{\bm}(s) e^{2\pi i\bm\cdot \bx} )\,ds
\\ & \notag \\
&
-\int_0^T(\theta_\kappa(s),e^{2\pi i\bm\cdot \bx} )\chi_{\bm}'(s)\,ds
+\notag
\int_0^T(\bu_\kappa(s)\theta_\kappa(s),\nabla e^{2\pi i\bm\cdot \bx} \chi_{\bm}(s))\,ds
\\&\label{bouss_kappa_wk_den}
  + \kappa \int_0^T((\theta_\kappa(s), e^{2\pi i \bm\cdot \bx}\chi_{\bm}(s)))\;ds=(\theta_{0},e^{2\pi i\bm\cdot \bx})\chi_{\bm}(0)
\end{align}
\end{subequations}
converges to the  weak formulation of $P^0_{\nu,0}$  (see \eqref{bouss_wk} with $\kappa=0$) as $\kappa\maps 0$.  After passing to the limit in the system we then show that the limiting functions satisfy the aforementioned regularity properties.  We proceed with the following steps.

\begin{list}{}{\leftmargin=0em}
\item {\em {\bf Step 1: } Using compactness arguments to prove convergence of a subsequence.}

From Theorem \ref{thm:diffusion}, in particular from the uniform bounds (with respect to $\kappa$) of $\bu_\kappa$, $\pd{\bu_\kappa}{t}$, $\theta_\kappa$ and $\pd{\theta_\kappa}{t}$ in the corresponding norms, one can use the Banach-Alaoglu Theorem and the Aubin Compactness theorem (see, e.g., \cite[Lemma 8.2]{Constantin_Foias_1988} or \cite{Temam_2001_Th_Num})  to justify that one can extract a subsequence of $(\bu_\kappa,\theta_\kappa)$ (which we still write as $(\bu_\kappa,\theta_\kappa)$) as $\kappa\maps 0$  and elements $\bu$ and $\theta$, such that
\begin{subequations}\label{wk_conv}
\begin{align}
\label{st_u_L2H}
\bu_\kappa&\maps\bu \quad\text{strongly in }L^2([0,T],H),\\
\label{wk_u_L2V}
\bu_\kappa&\rightharpoonup\bu \quad\text{weakly in }L^2([0,T],V)\text{ and weak-\tac \;in }L^\infty([0,T],H),\\
\label{wk_du_dt}
\pd{\bu_\kappa}{t}&\rightharpoonup\pd{\bu}{t} \quad\text{weakly in }L^2([0,T],V'),\\
\label{wk_theta_LiH}
\theta_\kappa&\rightharpoonup\theta \quad\text{weakly in }L^2([0,T],L^2)\text{ and weak-\tac \;in }L^\infty([0,T],L^2), \\
\label{wk_dtheta_dt}
\pd{\theta_\kappa}{t}&\rightharpoonup\pd{\theta}{t} \quad\text{weakly in }L^2([0,T],H^{-2}).
\end{align}
\end{subequations}\\

\item {\em {\bf Step 2:}  Passing to the limit in the system.}

The results from Step 1 imply that  for the linear terms in \eqref{bouss_kappa_weak}, we have, by the weak convergence in \eqref{wk_u_L2V} and  \eqref{wk_theta_LiH}, as $\kappa\maps 0$,
\begin{align*}
\int_0^T(\bu_\kappa(s), \Gamma'_{\bm}(s) e^{2\pi i\bm\cdot \bx})\,ds
  &\maps
  \int_0^T(\bu(s), \Gamma'_{\bm}(s) e^{2\pi i\bm\cdot \bx})\,ds
   ,\\
\nu\int_0^T((\bu_\kappa(s), \Gamma_{\bm}(s) e^{2\pi i\bm\cdot \bx}      ))\,ds
&\maps
\nu\int_0^T((\bu(s), \Gamma_{\bm}(s) e^{2\pi i\bm\cdot \bx}      ))\,ds
,\\
\int_0^T(\theta_\kappa(s)\be_2,\Gamma_{\bm}(s) e^{2\pi i\bm\cdot \bx} )\,ds
&\maps
\int_0^T(\theta(s)\be_2,\Gamma_{\bm}(s) e^{2\pi i\bm\cdot \bx} )\,ds
,\\
\int_0^T(\theta_\kappa(s),e^{2\pi i\bm\cdot\bx})\chi_{\bm}'(s)\,ds
&\maps
\int_0^T(\theta(s),e^{2\pi i\bm\cdot\bx})\chi_{\bm}'(s)\,ds
,\\
\kappa\abs{\int_0^T((\theta_\kappa(s), e^{2\pi i\bm\cdot \bx}\chi_{\bm}(s)))\;ds} & \leq C\sqrt{\kappa} (\sqrt{\kappa}\|\theta_\kappa\|_{L^2([0,T], H^1)})\leq CK_0\sqrt{\kappa}
\maps
0
\end{align*}

\bigskip

It remains to show the convergence of the remaining non-linear terms.  Let
\begin{align*}
   I(\kappa)&:=\sum_{j=1}^2\int_0^T(u^j_\kappa\bu_\kappa, \Gamma_{\bm}(s)\partial_j e^{2\pi i\bm\cdot \bx} )\,ds-
   \sum_{j=1}^2\int_0^T(u^j\bu, \Gamma_{\bm}(s) \partial_j e^{2\pi i\bm\cdot \bx} )\,ds
   \\
   J(\kappa)&:=\int_0^T\adv{\bu_\kappa(s)\theta_\kappa(s),\chi_{\bm}(s)\nabla e^{2\pi i\bm\cdot \bx}}\,ds-\int_0^T\adv{\bu(s)\theta(s),\chi_{\bm}(s)\nabla e^{2\pi i\bm\cdot \bx}}\,ds
\end{align*}
The convergence $I(\kappa)\maps 0$ as $\kappa\maps0$ is standard in the theory of the Navier-Stokes equations, thanks to \eqref{st_u_L2H} and \eqref{wk_u_L2V} (see, e.g., \cite{Temam_2001_Th_Num, Constantin_Foias_1988}).  To show $J(\kappa)\maps0$ as $\kappa\maps0$, we write $J(\kappa)=J_1(\kappa)+J_2(\kappa)$, the definitions of which are given below.  We  have
\begin{align*}
   J_1(\kappa)
   &:=
   \int_0^T(( \bu_\kappa(s)-\bu(s) ) \theta_\kappa(s),\nabla e^{2\pi i\bm\cdot \bx})\chi_{\bm}(s)\,ds
     \maps0
\end{align*}
as $\kappa\maps0$, since $\bu_\kappa\maps\bu$ strongly in $L^2([0,T],H)$ and $\theta_\kappa\maps\theta$ weakly in $L^2([0,T],H)$.  For $J_2$, we have
\begin{align*}
   J_2(\kappa)&:= \int_0^T\left(\bu(s)(\theta_\kappa(s)-\theta(s)),\nabla e^{2\pi i\bm\cdot \bx}\right)\chi_{\bm}(s)\,ds
     \maps0
\end{align*}
thanks to the weak convergence in \eqref{wk_theta_LiH} and the fact that $\bu\in L^2([0,T],H)$.
Thus, $J(\kappa)=J_1(\kappa)+J_2(\kappa)\maps0$.  Hence, sending $\kappa\maps0$, we see that $\bu$ and $\theta$ satisfy \eqref{bouss_wk}.  \\

\item {\em {\bf Step 3: }Show that $\bu\in C([0,T],H)$, that $\theta\in C_w([0,T],L^2)$, and that in fact $\theta\in C([0,T],L^2)$ .}

The uniform bound with respect to $\kappa$ on the time derivative of $\bu_\kappa$  given in Theorem \ref{thm:diffusion} (ii) allows us to pass to an additional subsequence if necessary to find that $\frac{d\bu}{dt}\in L^2([0,T],V')$.  Since $\bu\in L^2([0,T],V)$ and $\frac{d\bu}{dt}\in L^2([0,T],V')$, following the standard theory of NSE, (see, e.g.  Theorem 7.2 of \cite{Robinson_2001}) we obtain that $\bu\in C([0,T],H)$.

Next, we would like to show that $\theta\in C_w([0,T], L^2)$.   This can be proven without difficulty using standard arguments.   For completeness and for use in the later section we present the proof here.  We follow similar arguments as in  \cite{Levermore_Oliver_Titi_1996, Temam_2001_Th_Num}.  We start by showing that the sequence of solutions $\{\theta_\kappa\}$ (as $\kappa \maps 0$) is relatively compact in $C_w([0,T], H)$.  By  the Arzela-Ascoli theorem, it suffices to show that (a) $\{\theta_\kappa(t)\}$ is a relatively compact set in the weak topology of $L^2([0,T],\nT^2)$
for a.e $t\geq 0$ and (b) for every $\phi\in L^2(\nT^2)$ the sequence $\{(\theta_\kappa,\phi)\}$ is equicontinuous in $C([0,T])$. Condition (a) follows from the uniform boundedness of $\theta_\kappa(t)$  in $L^2(\nT^2)$ for a.e. $t\geq 0$,  as stated in Theorem \ref{thm:diffusion} part (i).  Next, we show that condition (b) is satisfied.  Following classical arguments,  we start by assuming that $\phi$ is smooth, for example we can assume that $\phi$ is a trigonometric polynomial.  We have
\begin{equation}
\aligned \label{e:continuity_theta_kappa}
&\quad
|(\theta_\kappa(t_2),\phi) - (\theta_\kappa(t_1), \phi)|
\\&\leq
\abs{
\kappa\int_{t_1}^{t_2}((\theta_\kappa(t),\phi))\;dt
}
+\abs{
 \int_{t_1}^{t_2} (\nabla(\bu_\kappa\theta_\kappa)(t),\phi)\;dt
}
\\&\leq
\kappa  \int_{t_1}^{t_2}  \|\theta_\kappa(t)\||\nabla\phi|  \;dt + \int_{t_1}^{t_2} (\bu_\kappa(t)\theta_\kappa(t),\nabla\phi)\;dt
\\&\leq
C\kappa^{1/2}|t_2-t_1|^{1/2}\left(\kappa\int_{t_1}^{t_2}\|\theta_\kappa(t)\|^2\;dt\right)^{1/2}
\\&\quad+
\|\nabla\phi\|_\infty |t_2-t_1|^{1/4}\left( \int_{t_1}^{t_2}\|\bu_\kappa(t)\|^4_4 \;dt \right)^{1/4}\left(\int_{t_1}^{t_2}|\theta_\kappa(t)|^2\;dt\right)^{1/2}.
\endaligned
\end{equation}
Without loss of generality assume $0<\kappa<1$ and use \eqref{L4_to_H1}, one then obtains
\begin{equation}\label{t-small}
\aligned
|(\theta_\kappa(t_2),\phi) - (\theta_\kappa(t_1), \phi)|&\leq C |t_2-t_1|^{1/2} \left(\kappa\int_{t_1}^{t_2}\|\theta_\kappa(t)\|^2\;dt\right)^{1/2} \\
&\quad    + C|t_2-t_1|^{1/4} \int_{t_1}^{t_2}|\bu_\kappa(t)|^2\|\bu_\kappa(t)\|^2 \;dt.
\endaligned
\end{equation}
From Theorem \ref{thm:diffusion} part (i), since $\bu_\kappa$ is uniformly bounded with respect to $\kappa$ in
$L^\infty([0,T], H) \cap L^2([0,T], V)$ and  $\displaystyle\kappa\int_{0}^T \|\theta_\kappa(t)\|^2\;dt < K_0$, with $K_0$ independent of $\kappa$, we have that the set $\{(\theta_\kappa,\phi) \}$ is equicontinuous in $C([0,T])$.  We now extend this result for all test functions $\phi$ in $\dot{L}^2(\nT^2)$ using a simple density argument of trigonometric polynomials in $\dot{L}^2(\nT^2)$.   Let $\epsilon>0$.   We choose  a trigonometric polynomial $\phi_\epsilon$   such that $|\phi-\phi_\epsilon | < \frac{\epsilon}{3|\theta_0| + 1}$.  Then, we have
\begin{equation}\label{eq:above}
\aligned
|(\theta_\kappa(t_2),\phi) - (\theta_\kappa(t_1), \phi)| &= |( \theta_\kappa(t_2)- \theta_\kappa(t_1), \phi-\phi_\epsilon) +  ( \theta_\kappa(t_2)- \theta_\kappa(t_1),\phi_\epsilon)  |\\
&\leq |\phi-\phi_\epsilon|\left(|\theta_\kappa(t_2)| + |\theta_\kappa(t_1)|\right) + |( \theta_\kappa(t_2)- \theta_\kappa(t_1),\phi_\epsilon)    |.
\endaligned
\end{equation}

From the uniform $L^\infty([0,T],L^2)$ bound of $\theta_\kappa$, with respect to $\kappa$, we conclude that the first term on the right-hand side of \eqref{eq:above} is less than $\frac{2}{3}\epsilon$.    Choosing $|t_2-t_1|$ small enough in \eqref{t-small} we can make the second term on the right-hand side of \eqref{eq:above} to be less than $\epsilon/3$.  Thus, the whole expression can be made less than $\epsilon$.  This completes the proof that $\theta\in C_w([0,T], L^2)$.  Finally, as pointed out by the authors \cite{Danchin_Paicu_2008}, since $\theta$ is transported by the div-free velocity field $\bu\in L^2([0,T], V)$, we get in addition that  $\theta\in C([0,T], L^2)$,  (see, e.g. \cite{DiPerna_Lions_1989} ).

From these results, standard arguments from the theory of the Navier-Stokes equations (see, e.g., \cite{Constantin_Foias_1988,Temam_2001_Th_Num,Robinson_2001}) now show that the initial conditions are satisfied in the sense of Definition \ref{def:weak}.

\item {\em {\bf Step 4:}  Show that if $\theta_0\in L^\infty$ then $\theta \in L^\infty([0,T], L^\infty)$.}

Here we will use E. Hopf and G. Stampacchia technique which are very similar to those used in \cite{Foias_Manley_Temam_1987} (see also \cite{Kinderlehrer_Stampacchia_1980, Temam_1997_IDDS}), but we give the details here for the sake of completeness.
For any function $f\in H^1$, we use the standard notation $f^+:=\max\{f,0\}.$    It is a standard exercise to show that if $f\in H^1$, then $f^+\in H^1$.
Let $(\bu_\kappa,\theta_\kappa)$ be a solution of \eqref{bouss}, as given in Theorem \ref{thm:diffusion}.  Let us denote $\Theta_\kappa:=\theta_\kappa-\|\theta_0\|_{L^\infty}$.  Notice that $\Theta_\kappa$ satisfies the evolution equation \eqref{bouss_den} with $\theta$ replaced by $\Theta_\kappa$ and $\bu$ replaced by $\bu_\kappa$.  Thus we have  $(\Theta_\kappa)^+\in L^2([0,T],H^1)$.  Taking the action of \eqref{bouss_den} with $(\Theta_\kappa)^+$ yields
\begin{align*}
   \frac{1}{2}\frac{d}{dt}\|(\Theta_\kappa)^+\|_{L^2}^2
   &=
   -\kappa\int_{\nT^2}|\nabla(\Theta_\kappa)^+|^2\,d\bx
   +\int_{\nT^2}\bu_\kappa\Theta_\kappa\cdot \nabla\Theta_\kappa^+\,d\bx \\&=
   -\kappa\int_{\nT^2}|\nabla(\Theta_\kappa)^+|^2\,d\bx +\frac{1}{2}\int_{\nT^2} \bu_\kappa\cdot\nabla(\Theta^+)^2 \;d\bx
      \\&=
   -\kappa\int_{\nT^2}|\nabla(\Theta_\kappa)^+|^2\,d\bx
   \leq0,
\end{align*}
thanks to \eqref{bouss_div}.  Thus $\|(\Theta_\kappa)^+(t)\|_{L^2}\leq \|(\Theta_\kappa)^+(0)\|_{L^2}=0$ which implies that $(\theta_\kappa(\bx,t)- \|\theta_0\|_{L^\infty})^+\leq0$ a.e.
Similarly, one can show that $( \|\theta_0\|_{L^\infty}-\theta_\kappa(\bx,t))^+\geq 0$ a.e.  It now follows that $\|\theta_\kappa\|_{L^\infty([0,T],L^\infty)}\leq \|\theta_0\|_{L^\infty}$ for all $\kappa>0$.  Thus, we have $\theta_\kappa$ is bounded uniformly with respect to $\kappa$ in $L^\infty([0,T],L^\infty)$.  Therefore, it follows from the Banach-Alaoglu Theorem,  that there exists a subsequence of the previous subsequence which we also denote as $\theta_\kappa$ converging in weak-\tac\; topology of $L^\infty([0,T], L^\infty)$ to $\theta$ and satisfies the following bounds: $\displaystyle\|\theta\|_{L^\infty([0,T],L^\infty)}\leq\lim\inf_{\kappa\maps 0} \|\theta_\kappa\|_{L^\infty([0,T],L^\infty)}  \leq\|\theta_0\|_{L^\infty}<\infty$.

The equivalence of \eqref{bouss_wk} to the functional form \eqref{e:functional_nu} follows from the standard argument of NSE (see, e.g., \cite{Temam_2001_Th_Num}).
 \end{list}
\end{proof}

\begin{theorem}[Existence of strong solutions]\label{exist_strong_visc}
   Let $\bu_0\in V$ and $\theta_0\in L^2$.  Then there exists a strong solution of $P_{\nu,0}^0$. Furthermore, the functional equation \eqref{e:funct_1} now holds in $L^2([0,T],H)$, and \eqref{e:funct_2} now holds in $L^2([0,T], H^{-1})$.  \end{theorem}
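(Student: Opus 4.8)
The plan is to run the argument of Theorem~\ref{exist_weak_visc} one derivative higher, taking advantage of the fact that $\bu_0\in V$ makes the $\kappa$-uniform bounds of Theorem~\ref{thm:diffusion}(ii) available; indeed, this proof is the natural place to establish those bounds, which were only announced there.

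First I would derive the $\kappa$-uniform $V$-estimate on the velocity. Working on the Galerkin truncations of the regularized system \eqref{bouss} and taking the action of the momentum equation against $A\bu_\kappa$ gives
\[
\frac12\frac{d}{dt}\|\bu_\kappa\|^2+\nu|A\bu_\kappa|^2=-\ip{B(\bu_\kappa,\bu_\kappa)}{A\bu_\kappa}+(P_\sigma(\theta_\kappa\be_2),A\bu_\kappa).
\]
Using the usual two-dimensional bound $|\ip{B(\bu_\kappa,\bu_\kappa)}{A\bu_\kappa}|\le C|\bu_\kappa|^{1/2}\|\bu_\kappa\|\,|A\bu_\kappa|^{3/2}$ (from \eqref{L4_to_H1} and Agmon's inequality \eqref{Agmon1/2}), the bound $|(P_\sigma(\theta_\kappa\be_2),A\bu_\kappa)|\le|\theta_\kappa|\,|A\bu_\kappa|$, and Young's inequality, one gets
\[
\frac{d}{dt}\|\bu_\kappa\|^2+\nu|A\bu_\kappa|^2\le \frac{C}{\nu^{3}}\,|\bu_\kappa|^{2}\|\bu_\kappa\|^{4}+\frac{2}{\nu}|\theta_\kappa|^2 .
\]
Since $|\theta_\kappa(t)|\le|\theta_0|$, $\|\bu_\kappa\|_{L^\infty([0,T],H)}\le K_0$ and $\|\bu_\kappa\|_{L^2([0,T],V)}\le K_0$ by Theorem~\ref{thm:diffusion}(i) --- all independent of $\kappa$ --- the coefficient $\frac{C}{\nu^{3}}|\bu_\kappa(t)|^{2}\|\bu_\kappa(t)\|^{2}$ is bounded in $L^1([0,T])$ uniformly in $\kappa$, so Gr\"onwall's inequality yields bounds for $\|\bu_\kappa\|_{L^\infty([0,T],V)}$ and $\|\bu_\kappa\|_{L^2([0,T],\cD(A))}$ depending only on $\nu,T,\|\bu_0\|,|\theta_0|$. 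Feeding this back into $\pd{\bu_\kappa}{t}=-\nu A\bu_\kappa-B(\bu_\kappa,\bu_\kappa)+P_\sigma(\theta_\kappa\be_2)$ and using $|B(\bu_\kappa,\bu_\kappa)|\le C|\bu_\kappa|^{1/2}\|\bu_\kappa\|\,|A\bu_\kappa|^{1/2}$ bounds $\pd{\bu_\kappa}{t}$ uniformly in $L^2([0,T],H)$, and from $\pd{\theta_\kappa}{t}=-\mathcal{B}(\bu_\kappa,\theta_\kappa)+\kappa\triangle\theta_\kappa$ the extension $\mathcal{B}:\cD(A)\times L^2\maps H^{-1}$ of Lemma~\ref{B_theta:prop} together with $\sqrt{\kappa}\,\|\theta_\kappa\|_{L^2([0,T],H^1)}\le K_0$ bounds $\pd{\theta_\kappa}{t}$ uniformly in $L^2([0,T],H^{-1})$.

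Next I would extract, by the Banach--Alaoglu and Aubin compactness theorems, a subsequence with $\bu_\kappa\maps\bu$ strongly in $L^2([0,T],V)$ (using the compact embedding $\cD(A)\hookrightarrow V$), $\bu_\kappa\rightharpoonup\bu$ weakly in $L^2([0,T],\cD(A))$ and weak-$\tac$ in $L^\infty([0,T],V)$, $\pd{\bu_\kappa}{t}\rightharpoonup\pd{\bu}{t}$ weakly in $L^2([0,T],H)$, together with the convergences \eqref{wk_theta_LiH} for $\theta$ and $\pd{\theta_\kappa}{t}\rightharpoonup\pd{\theta}{t}$ weakly in $L^2([0,T],H^{-1})$. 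Passing to the limit in \eqref{bouss_kappa_weak} is then word-for-word as in Steps~2--3 of the proof of Theorem~\ref{exist_weak_visc}: the linear terms converge by the above weak convergences, the nonlinear terms $I(\kappa),J(\kappa)$ converge to $0$ because $\bu_\kappa\maps\bu$ strongly in $L^2([0,T],V)\subset L^2([0,T],H)$ while $\theta_\kappa\rightharpoonup\theta$ weakly in $L^2([0,T],L^2)$, and $\theta\in C([0,T],L^2)$ follows again from \cite{DiPerna_Lions_1989} since $\theta$ is transported by the divergence-free field $\bu\in L^2([0,T],V)$. Hence $(\bu,\theta)$ is a weak solution of $P^0_{\nu,0}$. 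To upgrade it to a strong solution: $\bu\in L^2([0,T],\cD(A))$ and $\pd{\bu}{t}\in L^2([0,T],H)$ give $\bu\in C([0,T],V)$ by the Lions--Magenes interpolation lemma (see, e.g., \cite{Temam_2001_Th_Num,Robinson_2001}), while $\pd{\bu}{t}\in L^2([0,T],H)\subset L^1([0,T],H)$ and $\pd{\theta}{t}\in L^2([0,T],H^{-1})\subset L^1([0,T],H^{-1})$; this is precisely Definition~\ref{def:strong}. Finally, \eqref{e:funct_1} already holds in $L^2([0,T],V')$ by Theorem~\ref{exist_weak_visc}, and now each of its terms --- $\pd{\bu}{t}$, $\nu A\bu$, $B(\bu,\bu)$ (by $|B(\bu,\bu)|\le C|\bu|^{1/2}\|\bu\|\,|A\bu|^{1/2}$ with $\bu\in L^\infty([0,T],V)\cap L^2([0,T],\cD(A))$), and $P_\sigma(\theta\be_2)$ (since $|\theta(t)|\le|\theta_0|$) --- lies in $L^2([0,T],H)$, so the identity holds there; likewise \eqref{e:funct_2} holds in $L^2([0,T],H^{-1})$ because $\pd{\theta}{t}\in L^2([0,T],H^{-1})$ and $\mathcal{B}(\bu,\theta)\in L^2([0,T],H^{-1})$ by Lemma~\ref{B_theta:prop}.

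The only genuinely new work here is the $\kappa$-uniform $H^1$-estimate of the first step, and the one delicate point is checking that the Gr\"onwall constants are truly independent of $\kappa$; this works because the buoyancy forcing $\theta_\kappa\be_2$ is controlled uniformly in $\kappa$ by the transport bound $|\theta_\kappa(t)|\le|\theta_0|$, and the lower-order norms of $\bu_\kappa$ already carry $\kappa$-independent bounds from Theorem~\ref{thm:diffusion}(i). All the remaining steps are a routine repetition of the weak-solution construction, shifted up by one level of regularity.
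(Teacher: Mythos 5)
Your proposal is correct and follows the same overall strategy as the paper: pass to the limit $\kappa\to 0$ after establishing $\kappa$-uniform bounds in $L^\infty([0,T],V)\cap L^2([0,T],\cD(A))$ for $\bu_\kappa$, in $L^2([0,T],H)$ for $\partial_t\bu_\kappa$, and in $L^2([0,T],H^{-1})$ for $\partial_t\theta_\kappa$, then upgrade via compactness. The one place where you diverge is the treatment of the trilinear term in the $H^1$ energy estimate: you bound $|\ip{B(\bu_\kappa,\bu_\kappa)}{A\bu_\kappa}|\le C|\bu_\kappa|^{1/2}\|\bu_\kappa\|\,|A\bu_\kappa|^{3/2}$ and close with Young plus a Gr\"onwall argument whose coefficient $\tfrac{C}{\nu^3}|\bu_\kappa|^2\|\bu_\kappa\|^2$ is uniformly in $L^1([0,T])$ by Theorem~\ref{thm:diffusion}(i). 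The paper instead uses the classical identity $(B(\bu_\kappa,\bu_\kappa),A\bu_\kappa)=0$, valid for two-dimensional periodic boundary conditions, so the nonlinear term drops out entirely and one gets the clean explicit bound $\|\bu_\kappa(t)\|^2+\nu\int_0^t|A\bu_\kappa|^2\,ds\le\|\bu_0\|^2+\tfrac{1}{\nu}|\theta_0|^2T$ with no Gr\"onwall exponential. Your version is slightly longer but is the one that survives outside the periodic 2D setting (e.g., for no-slip boundary conditions, where the orthogonality fails); the paper's version buys a simpler, sharper constant by exploiting the periodic geometry. Both are valid, and the remainder of your argument (time-derivative bounds, compactness, passage to the limit, continuity in $V$, and the identification of the spaces in which \eqref{e:funct_1} and \eqref{e:funct_2} hold) matches the paper's proof.
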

\begin{proof}
   Since $\bu_0\in V$ and $\theta_0\in L^2$, we have by Theorem \ref{thm:diffusion} that there exists a solution $(\bu_\kappa,\theta_\kappa)$ of \eqref{bouss} with $\bu_\kappa\in L^\infty([0,T],V)\cap L^2([0,T],\mathcal{D}(A))$, and furthermore, $\frac{d}{dt}\bu_\kappa\in L^2([0,T],H)$.  In order to show that in fact the bounds on higher-order norms are independent of $\kappa$ (as stated in Theorem \ref{thm:diffusion} part (ii)), let us take the inner product of \eqref{bouss_nd_mo} with $A\bu_\kappa$.  Using  the Lions-Magenes lemma (see, e.g., \cite{Temam_2001_Th_Num}) to show that $\ip{\pd{\bu}{t}}{A\bu} = \frac{1}{2}\frac{d}{dt}\|\bu\|^2$ and the fact that $(B(\bu_\kappa,\bu_\kappa),A\bu_\kappa)=0$ due to the periodic boundary conditions, we have
   \begin{align*}
      \frac{1}{2}\frac{d}{dt}\|\bu_\kappa\|^2 +\nu|A\bu_\kappa|^2
      &=
      (\theta_\kappa \be_2,A\bu_\kappa)
      \leq
      |\theta_0||A\bu_\kappa|
      \leq
      \frac{1}{2\nu}|\theta_0|^2+\frac{\nu}{2}|A\bu_\kappa|^2.
   \end{align*}
Subtracting $\frac{\nu}{2}|A\bu_\kappa|^2$ and using Gr\"onwall's inequality yields
   \begin{align}\label{u_H2_bdd}
      \|\bu_\kappa(t)\|^2 +\nu\int_0^t|A\bu_\kappa|^2 \,ds
      &\leq
      \|\bu_0\|^2+\frac{1}{\nu}|\theta_0|^2t
      \leq
      \|\bu_0\|^2+\frac{1}{\nu}|\theta_0|^2T:=K_3.
   \end{align}
Thus $\bu_\kappa$ is bounded in $L^{\infty}([0,T],V)\cap L^2([0,T],\mathcal{D}(A))$ independently of $\kappa$.  Furthermore,
\begin{align*}
   \abs{\frac{d\bu_\kappa}{dt}}
   &=\sup_{|\bw|=1}\pair{B(\bu_\kappa,\bu_\kappa)}{\bw}
   +\nu\sup_{|\bw|=1}(A\bu_\kappa,\bw)
   +\sup_{|\bw|=1}(\theta_\kappa\be_2,\bw)
   \\&\leq
\sup_{|\bw|=1}|A\bu_\kappa|\|\bu_\kappa\||\bw|
   +\nu\sup_{|\bw|=1}|A\bu_\kappa||\bw|
   +\sup_{|\bw|=1}|\theta_\kappa||\bw|
    \\&\leq K_3|A\bu_\kappa|
   +\nu|A\bu_\kappa|
   +|\theta_0|
\end{align*}
Thus, $\frac{d}{dt}\bu_\kappa$ is bounded in $L^2([0,T],H)$ independently of $\kappa$ due to \eqref{u_H2_bdd}.
We also have,
\begin{align*}
   \norm{\frac{d\theta_{\kappa}}{dt}}_{H^{-1}}
   &=
   \sup_{\|\bw\|=1}\abs{\ip{\mathcal{B}(\bu_{\kappa},\theta_{\kappa})}{\bw}} +  \kappa\sup_{\|\bw\|=1}\abs{\ip{\nabla\theta_{\kappa}}{\nabla\bw}}\\
   &\leq
     \sup_{\|\bw\|=1}\|\bu_{\kappa}\|_{L^\infty}|\theta_{\kappa}|\|\bw\| +\kappa \sup_{\|\bw\|=1}\|\theta_\kappa\| \|w\|  \\
    &\leq  \|\bu_{\kappa}\|_{H^2}|\theta_0| + \sqrt{\kappa}\|\theta_\kappa\|,
\end{align*}
where we have used here the assumption that $0< \kappa < 1$.   Hence, from Theorem \ref{thm:diffusion}, we have that $\frac{d\theta_{\kappa}}{dt}$ is bounded in $L^2([0,T],H^{-1})$ independently of $\kappa$.
The above estimates allow us to use the Banach-Alaoglu Theorem and the Aubin Compactness Theorem (see, e.g., \cite{Temam_2001_Th_Num, Constantin_Foias_1988}) , as $\kappa\maps0$, to extract a further subsequence (extracted from the sequences in \eqref{st_u_L2H}, \eqref{wk_u_L2V} and \eqref{wk_theta_LiH}, and which we still label with a subscript $\kappa$, such that
\begin{subequations}\label{st_conv}
\begin{align}
\label{st_u_L2V}
\bu_\kappa&\maps\bu \quad\text{strongly in }L^2([0,T],V),
\\
\label{wk_u_L2DA}
\bu_\kappa&\rightharpoonup\bu \quad\text{weakly in }L^2([0,T],\cD(A))\text{ and weak-\tac\; in }L^\infty([0,T],V),\\
\label{wk_d_u_H}
\frac{d\bu_\kappa}{dt}&\rightharpoonup\frac{d\bu}{dt} \quad\text{weakly in }L^2([0,T],H),\\
\label{wk_d_theta}
\frac{d\theta_\kappa}{dt}&\rightharpoonup\frac{d\theta}{dt} \quad\text{weakly in }L^2([0,T],H^{-1}),
\end{align}
\end{subequations}
where the limit $\bu$ and $\theta$ are the same elements as in \eqref{wk_conv}, by the uniqueness of limits since the current topology are stronger than those in \eqref{wk_conv}. Furthermore,  since $\bu\in L^2([0,T],\cD(A))$ and $\frac{d\bu}{dt}\in L^2([0,T],H)$, following the standard theory of NSE, (see, e.g.  Theorem 7.2 of \cite{Robinson_2001}) we obtain that $\bu\in C([0,T],V)$.   Thus we have shown the existence of a strong solution as defined in Definition \ref{def:strong}.
\end{proof}

In the next theorem we will show the uniqueness of strong solutions. We note that in the work of \cite{Danchin_Paicu_2008_French}, global well-posedness in the case of whole plane $\nR^2$ was established with initial data $\bu_0$ and $\theta_0$ both only in $L^2$.  This optimal global well-posedness result was established using elegantly {\em a priori} estimates in Besov spaces for the heat equation and the transport equation.  Here we give an alternate proof which uses more elementary techniques but we require stronger initial data for the velocity field.  This will allow us to fix some basic ideas that we will use to get the optimal global well-posedness results for the anisotropic Boussinesq equations which we will present in the next section.

\begin{theorem}[Uniqueness of Strong Solutions of $P_{\nu,0}^0$]\label{uniqueness_visc}
   Let $T>0$.  Suppose $\theta_0\in L^2$ and $\bu_0\in V$.  Then there exists a unique strong solution $(\bu,\theta)$ to $P_{\nu,0}^0$.
\end{theorem}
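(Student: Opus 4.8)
The plan is to compare two strong solutions in a norm adapted to the very low regularity of the temperature, and to close a logarithmic (Osgood-type) Gr\"onwall inequality in the spirit of Yudovich. Let $(\bu_1,\theta_1)$ and $(\bu_2,\theta_2)$ be strong solutions of $P^0_{\nu,0}$ issued from the same data. Following the idea announced in the introduction, I would write $\theta_i=\Delta\xi_i$ with $\int_{\nT^2}\xi_i\,dx=0$ and set $\bw:=\bu_1-\bu_2$, $\eta:=\xi_1-\xi_2$, so that $\theta_1-\theta_2=\Delta\eta$ and $\bw(0)=0$, $\eta(0)=0$. Since each $\theta_i$ is transported by the divergence-free field $\bu_i\in L^2([0,T],V)$, the DiPerna--Lions theory gives $|\theta_i(t)|=|\theta_0|=:M$, hence by the elliptic estimate on $\nT^2$ we have the \emph{uniform} bounds $\|\xi_i(t)\|_{H^2}\le CM$, $\|\eta(t)\|_{H^2}\le CM$ and $|\nabla\eta(t)|\le CM$ on $[0,T]$; these will be used repeatedly. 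The target is the quantity $E(t):=|\bw(t)|^2+|\nabla\eta(t)|^2$ together with an inequality $\frac{d}{dt}E\le a(t)\,\Omega(E)$ with $a\in L^1([0,T])$ and $\Omega(r)=r\bigl(1+\log_+(C/r)\bigr)$; since $E(0)=0$ and $\int_{0^+}dr/\Omega(r)=+\infty$, Osgood's lemma then forces $E\equiv0$, which gives $\bu_1=\bu_2$ and, by the zero-average normalization, $\theta_1=\theta_2$.

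First I would carry out the velocity estimate. Subtracting the functional form \eqref{e:funct_1} for the two solutions gives $\partial_t\bw+\nu A\bw+B(\bw,\bu_1)+B(\bu_2,\bw)=P_\sigma\bigl((\Delta\eta)\be_2\bigr)$; pairing with $\bw$, using \eqref{B:zero} to kill $\ip{B(\bu_2,\bw)}{\bw}$, using \eqref{B:Alt} together with the Ladyzhenskaya inequality \eqref{L4_to_H1} and the a priori bound $\|\bu_1(t)\|\le K$ for $\ip{B(\bw,\bu_1)}{\bw}$, and the identity $\bigl((\Delta\eta)\be_2,\bw\bigr)=-(\nabla\eta,\nabla w^2)$ for the buoyancy term, I obtain $\frac{d}{dt}|\bw|^2+\nu\|\bw\|^2\le C_1 E$. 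The point worth stressing is that the buoyancy term is already \emph{linear} in $E$ precisely because, after the substitution $\theta=\Delta\xi$, it is controlled by $|\nabla\eta|^2$, which is one of the two pieces of $E$; and the viscosity is genuinely needed to absorb $\ip{B(\bw,\bu_1)}{\bw}$.

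The temperature estimate is the heart of the argument. Subtracting the $\theta$-equations \eqref{e:funct_2} and applying $\Delta^{-1}$ gives $\partial_t\eta+\Delta^{-1}(\bu_2\cdot\nabla\Delta\eta)+\Delta^{-1}(\bw\cdot\nabla\Delta\xi_1)=0$; pairing with $-\Delta\eta$, integrating by parts and using $\nabla\cdot\bu_2=\nabla\cdot\bw=0$, one is left with $\tfrac12\frac{d}{dt}|\nabla\eta|^2=\int_{\nT^2}(\nabla\eta\cdot\nabla)\bu_2\cdot\nabla\eta\,dx-\int_{\nT^2}\theta_1\,\bw\cdot\nabla\eta\,dx$. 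For the first (``Yudovich'') term I would use H\"older as $\|\nabla\bu_2\|_{L^p}\|\nabla\eta\|_{L^{2p'}}^2$ with $\tfrac1p+\tfrac1{p'}=1$, then \eqref{CZ_est} applied to $\nabla\bu_2$ (so that $\|\nabla\bu_2\|_{L^p}\le C\sqrt p\,\|\bu_2\|_{H^2}$, with $\|\bu_2(\cdot)\|_{H^2}\in L^2([0,T])$ since $\bu_2$ is a strong solution) and a $p$-dependent Gagliardo--Nirenberg bound for $\|\nabla\eta\|_{L^{2p'}}$ controlled by the uniform $\|\nabla\eta\|_{H^1}\le CM$; this gives $\lesssim\|\bu_2(t)\|_{H^2}\,\sqrt p\,M^{2/p}E^{1-1/p}$, and optimizing over $p\ge2$ (taking $p\sim\log(M^2/E)$) yields $\lesssim\|\bu_2(t)\|_{H^2}\,E\,\sqrt{\log(M^2/E)}$. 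For the source term I would write $\bigl|\int\theta_1\bw\cdot\nabla\eta\bigr|\le M\,\|\bw\|_{L^q}\|\nabla\eta\|_{L^{q'}}$ with $\tfrac1q+\tfrac1{q'}=\tfrac12$, bound $\|\bw\|_{L^q}\le C\sqrt q\,\|\bw\|$ via \eqref{CZ_est}, bound $\|\nabla\eta\|_{L^{q'}}$ by interpolation against $\|\nabla\eta\|_{H^1}\le CM$, use Young's inequality to send the resulting $\|\bw\|$ into the dissipation $\nu\|\bw\|^2$, and again optimize over $q$ to arrive at $\le\tfrac{\nu}{8}\|\bw\|^2+C\,M^2 E\log(M^2/E)$. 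Adding this to the velocity estimate (and bounding $\sqrt{\log(M^2/E)}\le\log(M^2/E)$ for small $E$) produces $\frac{d}{dt}E\le C\bigl(1+\|\bu_2(t)\|_{H^2}\bigr)\,E\bigl(1+\log_+(C'/E)\bigr)$ with the coefficient in $L^2([0,T])\subset L^1([0,T])$, and Osgood's lemma concludes.

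The hard part, and the reason the device $\theta=\Delta\xi$ is indispensable, is that $\theta$ lies only in $C([0,T],L^2)$, which is far too rough for a direct energy estimate on the transport equation; working with $\eta=\Delta^{-1}(\theta_1-\theta_2)$ moves everything to the level $\nabla\eta\in L^\infty_T H^1$, where the nonlinear terms become manageable. Even then, naive H\"older/Ladyzhenskaya estimates of the two terms in the $\eta$-balance would produce sub-linear powers of $E$ (e.g.\ $E^{2/3}$ or $E^{1/2}$), which destroy uniqueness; the decisive point is to keep the transport term \emph{quadratic} in $\nabla\eta$ (and the source term \emph{linear} in $\bw$, so that the viscous dissipation pays for it) and then to exploit the full strength of the $\sqrt L$-type inequality \eqref{CZ_est} through a $p$-dependent interpolation and an optimization in $p$, exactly as in Yudovich's uniqueness proof for the 2D Euler equations, which lands precisely on the borderline modulus $r\log(1/r)$. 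A secondary technical point is that the Yudovich term carries the time-weight $\|\bu_2(\cdot)\|_{H^2}$, which is only $L^2$ in time, so one must invoke the generalized Osgood (Gr\"onwall) lemma with an $L^1$-in-time coefficient rather than the classical inequality.
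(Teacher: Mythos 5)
Your proposal is correct, and it follows the paper's overall strategy: the same substitution $\theta_\ell=\triangle\xi_\ell$, the same energy $|\bud|^2+\|\xid\|^2$, the same linear velocity estimate, and the same splitting of the $\xi$-balance into a transport term quadratic in $\nabla\xid$ and a source term linear in $\bud$. Where you genuinely diverge is in the technical execution of the two delicate terms. For the source term $(\bud\,\triangle\xi_1,\nabla\xid)$ the paper estimates $\|\bud\|_{L^\infty}$ via the Br\'ezis--Gallouet-type inequality \eqref{brezis}, which produces the remainder $|A\bud|^2e^{-2/\epsilon^{1/4}}$ and forces both the auxiliary parameter $\eta>0$ and the localization to a time interval where the energy stays below $1$; you instead use a three-factor H\"older estimate, \eqref{CZ_est} applied to $\bud$, and absorption of $\|\bud\|$ into the viscous dissipation, which avoids \eqref{brezis} and the exponentially small remainder altogether. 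Second, the paper keeps the Yudovich exponent $\epsilon$ fixed, integrates the resulting inequality $\dot z\lesssim a_\epsilon(t)\,z^{1-\epsilon}+(\text{small})$, and only sends $\epsilon\to0$ at the very end (together with $\eta\to0$ and a continuity argument at $t^*$); you optimize the exponent pointwise in time to land directly on the Osgood modulus $E\bigl(1+\log_+(C/E)\bigr)$ with an $L^1$-in-time coefficient, and conclude by Osgood's lemma. Both closures are standard variants of the Yudovich argument and both legitimately use the strong-solution regularity $\bu_2\in L^2([0,T],\mathcal{D}(A))$ (you through $\|\bu_2\|_{H^2}$ in the coefficient, the paper through $\int_0^T|A\bu_2|\,ds$); your version is arguably a bit cleaner in that it needs neither the appendix inequality nor the two-parameter limit, at the cost of invoking the generalized Osgood lemma rather than a bare integration.
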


\begin{proof}
   The existence of solutions satisfying the hypothesis is already given by Theorem \ref{exist_strong_visc}.  It remains to show the uniqueness.  Let $(\bu_\ell,\theta_\ell)$ be two strong solutions, $\ell=1,2$, and define $\xi_\ell:=\triangle^{-1}\theta_\ell$ such that $\int_{\nT^2}\xi_\ell\,d\bx=0$ on the interval $[0,T]$.  Write $\bud:=\bu_1-\bu_2$, and $\xid:=\xi_1-\xi_2$.   These quantities satisfy the functional equations
   \begin{subequations}\label{e:functional_nu_diff}
  \begin{align}\label{e:funct_1_diff}
  \pd{\bud}{t} + \nu A\bud + B(\bu_1,\bud) + B(\bud,\bu_2)&= P_\sigma(\triangle\xid\be_2)\quad \mbox{in}\quad L^2([0,T], H)\quad
  \mbox{and}\\\label{e:funct_2_diff}
   \pd{\triangle\xid}{t} + \mathcal{B}(\bud,\triangle\xi_1) + \mathcal{B}(\bu_2,\triangle\xid) &= 0\quad \mbox{in}\quad L^2([0,T], H^{-1}).
   \end{align}
   \end{subequations}
  Taking the inner product in $H$ of \eqref{e:funct_1_diff} with $\bud$ , and taking the action in  $H^{-1}$ of \eqref{e:funct_2_diff} on $\xid \in L^2([0,T], H^{2})$, we obtain, thanks to Lemmas \ref{B:prop} and \ref{B_theta:prop},
\begin{subequations}\label{bouss_diff_en}
\begin{align}
\label{bouss_diff_en_u}
   \frac{1}{2}\frac{d}{dt}|\bud|^2
   &+ \nu\|\bud\|^2
   =
  -(B(\bud,\bu_1), \bud)
       +(\triangle\xid\be_2,\bud),
   \\\label{bouss_diff_en_xi}
   \frac{1}{2}\frac{d}{dt}\|\xid\|^2
    &=
        (\bud\triangle\xi_1,\nabla\xid)
      -(\bu_2\triangle\xid,\nabla\xid).
\end{align}
\end{subequations}
In \eqref{bouss_diff_en_xi} we used the Lions-Magenes Lemma (see, e.g., \cite{Temam_2001_Th_Num})  to obtain $\frac{1}{2}\frac{d}{dt}\|\xid\|^2 = \ip{\frac{d\xid}{dt}}{\xid}$.
    Let $K=\max_{\ell=1,2}\set{\|\bu_\ell\|_{L^\infty([0,T], V)},\|\theta_\ell\|_{L^\infty([0,T], L^2)}}$.  From equation \eqref{bouss_diff_en_u}, \eqref{B:ineq1} and since $\bu_1\in L^\infty([0,T],V)$ we have
   \begin{align}
      \frac{1}{2}\frac{d}{dt}|\bud|^2  + \nu\|\bud\|^2
      &\notag
      \leq
      C|\bud|\|\bud\|\|\bu_1\|+\|\xid\|\|\bud\|
      \\&\leq\label{bouss_u_diff}
      \frac{K}{\nu}|\bud|^2+\frac{\nu}{6}\|\bud\|^2 + \frac{3}{2\nu}\|\xid\|^2+\frac{\nu}{6}\|\bud\|^2.
   \end{align}

    Next, let $\epsilon>0$ be given such that $\epsilon\ll1$.  For the equation \eqref{bouss_diff_en_xi}, we integrate by parts and use \eqref{B_theta:zero} to find

 \begin{align}
      \frac{1}{2}\frac{d}{dt}\|\xid\|^2
      &=\notag
      \pair{\bud\triangle\xi_1}{\nabla\xid}
      +\sum_{j=1}^2\pair{\partial_j\bu_2}{\nabla\xid\partial_j\xid}
      \\&\leq \notag
      \|\bud\|_{L^\infty}|\triangle\xi_1|\|\xid\|
      +\|\nabla \bu_2\|_{L^{2/\epsilon}}\|\xid\|\|\nabla\xid\|_{L^{2/(1-\epsilon)}}
      \\&\leq \notag
      K\pnt{\|\bud\|\epsilon^{-1/4}+|A\bud|e^{-1/\epsilon^{1/4}}}\|\xid\|
      +C\|\nabla \bu_2\|_{L^{2/\epsilon}}\|\xid\|\|\xid\|^{1-\epsilon}\|\xid\|_{H^2}^\epsilon
      \\&\leq \label{bouss_xi_diff}
      \frac{\nu}{6}\|\bud\|^2
      +C\pnt{\frac{K^2}{\nu}\epsilon^{-1/2}
      +1}\|\xid\|^2
      +|A\bud|^2e^{-2/\epsilon^{1/4}}
      \\&\quad \notag
      +K^{\epsilon}\epsilon^{-1/2}|A\bu_2|\|\xid\|^{2-\epsilon},
   \end{align}
   where we have used  \eqref{CZ_est}, \eqref{brezis}, and the interpolation inequality $\|\nabla\xid\|_{L^{2/(1-\epsilon)}}\leq C\|\xid\|^{1-\epsilon}\|\xid\|_{H^2}^\epsilon$, noting that $C$ is independent of $\epsilon$.

Next, we will use the fact that, $\xid(0)=0$ and $\bud(0)=0$, and also that $\|\xid(t)\|$ and $|\bud(t)|$ are continuous in time and thus there exist a $\tau>0$ such that  $\|\xid(t)\|<1$ and $|\bud(t)|<1$ for all $t\in[0,\tau]$.  Let $t^* = \sup\{\tau\in (0,T] : |\bud(t)|<1 \mbox{ and } \|\xid(t)\|<1 \mbox{ for  all } t\in[0,\tau)\}$.   Adding \eqref{bouss_u_diff} and \eqref{bouss_xi_diff} and rearranging, we have on $[0,t^*]$,
\begin{align}
&\quad\notag
   \frac{1}{2}\frac{d}{dt}\pnt{|\bud|^2+\|\xid\|^2 }+\frac{\nu}{2}\|\bud\|^2
   \\&\leq \notag
   K_\nu\pnt{1+\frac{1}{ \epsilon^{1/2}}}\pnt{|\bud|^2 + \|\xid\|^2}
      +|A\bud|^2e^{-2/\epsilon^{1/4}}
      +K^{\epsilon}\epsilon^{-1/2}|A\bu_2|\|\xid\|^{2-\epsilon}
   \\&\leq \label{less_than_one}
   K_\nu\pnt{1+\frac{1}{ \epsilon^{1/2}}+\frac{K^{\epsilon}}{\epsilon^{1/2}}|A\bu_2|}\pnt{|\bud|^2 + \|\xid\|^2}^{1-\epsilon}
      +|A\bud|^2e^{-2/\epsilon^{1/4}}.
\end{align}
 Let $\eta>0$, be arbitrary and let $z:=|\bud|^2+\|\xid\|^2+\eta$.  Dividing \eqref{less_than_one} by $z^{1-\epsilon}$, we find
\begin{align*}
   \frac{1}{\epsilon}\frac{d}{dt}z^{\epsilon}
   &\leq
   K_\nu\pnt{1+\frac{1}{ \epsilon^{1/2}}+\frac{K^{\epsilon}}{\epsilon^{1/2}}|A\bu_2|}
   +z^{\epsilon-1}|A\bud|^2e^{-2/\epsilon^{1/4}}
   \\&\leq
   K_\nu\pnt{1+\frac{1}{ \epsilon^{1/2}}+\frac{K^{\epsilon}}{\epsilon^{1/2}}|A\bu_2|}
   +(\eta)^{\epsilon-1}|A\bud|^2e^{-2/\epsilon^{1/4}},
\end{align*}
since $z\geq\eta$.  Integrating over $[0,t]$, for $t\in(0,t^*]$, we  find
\begin{align}\label{z_int}
  z(t)
   &\leq
   K_\nu^{1/\epsilon}\pnt{\epsilon T+\epsilon^{1/2}T+K^{\epsilon}\epsilon^{1/2}\int_0^T|A\bu_2(s)|\,ds}^{1/\epsilon}
   \\&\quad\nonumber
   +\epsilon^{1/\epsilon}(\eta)^{1-1/\epsilon}e^{-2\epsilon^{-5/4}}\pnt{\int_0^T|A\bud(s)|^2\,ds}^{1/\epsilon} + \eta^{1/\epsilon}.
\end{align}
Sending $\eta\maps 0$, we obtain
\begin{align}\label{e:E4}
|\bud(t)|^2 + \|\xid(t)\|^2 \leq K_\nu^{1/\epsilon} \left(\epsilon T + \epsilon^{1/2}T + cK^\epsilon\epsilon^{1/2} \int_0^T|A\bu_2(s)|^2\;ds\right)^{1/\epsilon}
\end{align}
for $t\in [0,t^*]$.  Taking the limit of \eqref{e:E4}, as $\epsilon\maps0$, we find that $\|\xid(t)\|=0$ and $|\bud(t)|=0$ on $[0,t^{*}]$.  In particular,  $|\bud(t^*)|^2= \|\xid(t^*)\|^2=0<1$.  Therefore, from the continuity of $|\bud(t)|^2$ and $\|\xid(t)\|^2$ and the definition of $t^*$, we conclude that $t^*= T$, otherwise we a contradiction to the definition of $t^*$.  Hence, $\bud(t)=0$ and $\xid(t)=0$ for all $t\in [0,T]$.
 \end{proof}
\section{Global Well-posedness Results for the non-diffusive Boussinesq Equations with Horizontal Viscosity \texorpdfstring{($P_{\nu_x,0}^0$)}{}}\label{sec:aniso}

We now consider the Boussinesq equations with anisotropic viscosity  as given in \eqref{bouss_aniso} ($P_{\nu_x,0}^0$).   We will establish here global well-posedness results under some not too restricted initial conditions.  In the first part of this section we will first define what we mean by weak solution to system \eqref{bouss_aniso} and then show its existence.   Then, under some additional requirements on initial data, we can show uniqueness.
To set additional notation, we denote the vorticity $\omega := \partial_1u^2-\partial_2 u^1$,  which satisfies the following equation
\begin{align}\label{bouss_aniso_vort}
 \partial_t\omega + \nabla\cdot(\omega\bu) -\nu\partial_1^2\omega&= \partial_1\theta.
 \end{align}

The best global well-posedness result we are aware of for problem \eqref{bouss_aniso} in the case of the whole plane $\nR^2$ is stated in  following theorem, established in \cite{Danchin_Paicu_2008}.
\begin{theorem}[Danchin and Paicu,\cite{Danchin_Paicu_2008}]\label{t:Paicu_exist}
Let $\Omega = \nR^2$.  Suppose $\theta_0\in L^2\cap L^\infty$ , and $\bu_0\in V$ with $\omega_0\in \sqrt{L}$.  Then system \eqref{bouss_aniso} admits a global solution $(\bu,\theta)$ such that
$\theta\in C_B([0,\infty);L^2)\cap C_w([0,\infty);L^\infty)\cap L^\infty([0,\infty),L^\infty)$
and $\bu \in C_w([0,\infty);H^1)$,
$\bu\cdot\be_2\in L^2_{\text{loc}}([0,\infty);H^2)$,
$\omega\in L^\infty_{\text{loc}}([0,\infty),\sqrt{L})$,
$\nabla \bu\in  L^2_{\text{loc}}([0,\infty),\sqrt{L})$.
If in addition $\theta_0\in H^s$ for some $s\in (0, 1]$, then $\theta\in C([0,\infty);H^{s-\epsilon})$
 for all $\epsilon > 0$.   Finally, if $s > 1/2$, then the solution is unique.
\end{theorem}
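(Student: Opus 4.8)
The theorem has three parts: global existence with the listed regularity, propagation of $H^{s-\epsilon}$-regularity of $\theta$, and uniqueness under $s>1/2$. The plan is to obtain existence from a priori estimates plus a compactness (Galerkin, or Friedrichs-mollification) scheme, the regularity propagation from classical transport theory, and uniqueness from a difference estimate in the spirit of Theorem~\ref{uniqueness_visc}. First I would record the estimates that do not see the anisotropy: since $\theta$ is transported by the divergence-free field $\bu$, every $L^q$-norm of $\theta$ ($q\in[1,\infty]$) is conserved, so $\theta\in L^\infty([0,\infty);L^2\cap L^\infty)$; pairing the momentum equation with $\bu$ gives $\tfrac12\tfrac{d}{dt}|\bu|^2+\nu\|\partial_1\bu\|_{L^2}^2=(\theta\be_2,\bu)\le\|\theta_0\|_{L^2}|\bu|$, whence $\bu\in L^\infty_{\mathrm{loc}}(L^2)$ and $\partial_1\bu\in L^2_{\mathrm{loc}}(L^2)$. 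The anisotropy enters through the vorticity equation~\eqref{bouss_aniso_vort}: pairing it with $\omega$ and absorbing $\int\partial_1\theta\,\omega=-\int\theta\,\partial_1\omega$ into the horizontal dissipation gives $\omega\in L^\infty_{\mathrm{loc}}(L^2)$ and $\partial_1\omega\in L^2_{\mathrm{loc}}(L^2)$; since incompressibility gives the identity $\partial_1\omega=\triangle u^2$, this already yields $\bu\cdot\be_2\in L^2_{\mathrm{loc}}(H^2)$.

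\textbf{The $\sqrt{L}$ estimate and passage to the limit.} The delicate estimate is $\omega\in L^\infty_{\mathrm{loc}}(\sqrt L)$: one pairs~\eqref{bouss_aniso_vort} with $|\omega|^{p-2}\omega$, integrates by parts to move $\partial_1$ off $\theta$ and onto the vorticity power, absorbs the result against the horizontal dissipation, and tracks the $p$-dependence carefully (using $\|\theta_0\|_{L^p}\le\max\{\|\theta_0\|_{L^2},\|\theta_0\|_{L^\infty}\}$ uniformly in $p$) to reach $\|\omega(t)\|_{L^p}\lesssim(\|\omega_0\|_{\sqrt L}+C(t))\sqrt{p-1}$. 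Combining $u^2\in L^2_{\mathrm{loc}}(H^2)$, $\omega\in\sqrt L$ and $\partial_1 u^1=-\partial_2 u^2$ then gives $\nabla\bu\in L^2_{\mathrm{loc}}(\sqrt L)$ through~\eqref{CZ_est} and~\eqref{Calderon}, and $\bu\in C_w([0,\infty);H^1)$. To produce an actual solution I would run all the above estimates on regularized problems, note that the bounds are uniform in the regularization parameter, extract weak-$*$ limits, obtain strong $L^2_{\mathrm{loc}}(L^2_{\mathrm{loc}})$-convergence of the velocities from Aubin--Lions (using a bound on $\partial_t\bu$ in a negative-order space), pass to the limit in $B(\bu,\bu)$ and $\mathcal{B}(\bu,\theta)$, recover weak time-continuity from the time-derivative bounds, and upgrade $\theta$ to $C_B([0,\infty);L^2)$ and $C_w([0,\infty);L^\infty)$ exactly as in Step~3 of the proof of Theorem~\ref{exist_weak_visc}.

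\textbf{Regularity propagation and uniqueness.} If $\theta_0\in H^s$ with $s\in(0,1]$, I would propagate $H^\sigma$-regularity of $\theta$ along $\partial_t\theta+\bu\cdot\nabla\theta=0$ for every $\sigma<s$: since $\omega\in L^\infty_{\mathrm{loc}}(\sqrt L)$, \eqref{Calderon} makes $\bu$ spatially log-Lipschitz (uniformly on compact time intervals), and a commutator estimate for $(-\triangle)^{\sigma/2}(\bu\cdot\nabla\theta)$ — with the $\nabla\bu$ factor taken in $L^p$ for large $p$ against $\theta$ in $W^{\sigma,q}$, $1/p+1/q=1/2$, and $p$ optimized — produces an Osgood-type inequality for $\|\theta(t)\|_{H^\sigma}$ that stays finite on every $[0,T]$; the $\epsilon$-loss is exactly the loss incurred transporting Sobolev data by a merely log-Lipschitz flow (alternatively, quote the Lagrangian/renormalization theory for transport with Sobolev velocity). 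For uniqueness when $s>1/2$ I would mimic Theorem~\ref{uniqueness_visc}: with $\xi_\ell:=\triangle^{-1}\theta_\ell$ (zero average), $\bud:=\bu_1-\bu_2$, $\xid:=\xi_1-\xi_2$, control $\tfrac{d}{dt}\bigl(|\bud|^2+\|\xid\|^2\bigr)$, estimating the terms $(B(\bud,\bu_1),\bud)$, $(\bud\,\triangle\xi_1,\nabla\xid)$ and $(\bu_2\,\triangle\xid,\nabla\xid)$ by means of~\eqref{CZ_est}, \eqref{Calderon}, the Br\'ezis--Gallouet inequality~\eqref{brezis} and an $\epsilon$-dependent interpolation, reaching a nonlinear Gr\"onwall inequality on the maximal interval where $|\bud|,\|\xid\|<1$, which forces $\bud\equiv\xid\equiv0$.

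\textbf{The main obstacle.} Essentially all the difficulty sits in two places. The first is the transport term $\bud\cdot\nabla\theta$ in the uniqueness estimate: bounding it in the usual $H^{-1}$/paraproduct framework is precisely what forces Danchin and Paicu's hypothesis $s>1/2$, whereas rewriting $\theta=\triangle\xi$ trades it for $\bud\,\triangle\xi_1$ tested against $\nabla\xid$, which requires only $\theta_1\in L^\infty$ — so that route in fact yields uniqueness with no lower bound on $s$, and the $s>1/2$ in the statement above is only an artifact of the paraproduct method. The second, peculiar to the anisotropic viscosity, is that the dissipation supplies only $\nu\|\partial_1\bud\|^2$ rather than $\nu\|\nabla\bud\|^2$, so every term in the $\bud$-equation must be arranged so that only the horizontal derivative of $\bud$ needs absorbing (e.g.\ via an anisotropic Ladyzhenskaya inequality, with $\partial_2\bud$ controlled merely by its $L^\infty_tH^1$-bound), and a parallel anisotropic bookkeeping is what makes the $\sqrt{L}$ vorticity estimate close; I expect the bulk of the labor to lie in these two bookkeeping steps.
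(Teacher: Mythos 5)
The first thing to note is that the paper does not prove this statement at all: Theorem \ref{t:Paicu_exist} is quoted verbatim from Danchin and Paicu \cite{Danchin_Paicu_2008} as the benchmark that the paper then improves upon, so there is no in-paper proof to compare against. What the paper does prove are its own analogues in the periodic setting: Theorem \ref{EU_aniso} (existence and regularity, via regularization with artificial vertical viscosity $\nu_y^{(n)}$ and diffusion $\kappa^{(n)}$, the energy and vorticity estimates, the pairing of \eqref{bouss_aniso_vort} with $|\omega|^{p-2}\omega$ to get the $\sqrt{L}$ bound, the identity $\partial_1\omega=\triangle u^2$ giving $u^2\in L^2_{\mathrm{loc}}(H^2)$, and Aubin compactness), together with the subsequent uniqueness theorem (via $\theta=\triangle\xi$ and a Yudovich-type $L^p$ argument). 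Your existence sketch coincides with the former almost step for step, and your uniqueness sketch is the latter --- which is the present paper's contribution, not Danchin--Paicu's. You correctly diagnose that the hypothesis $s>1/2$ in the quoted statement is an artifact of the Bony paraproduct machinery that Danchin and Paicu actually use for uniqueness; but if the goal is to prove the theorem \emph{as stated}, including that clause and the propagation clause $\theta\in C([0,\infty);H^{s-\epsilon})$, you would need their Besov-space transport estimates, which your proposal only gestures at and which have no counterpart anywhere in this paper.

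One concrete technical caution on your uniqueness step: you list the Br\'ezis--Gallouet inequality \eqref{brezis} among your tools, but \eqref{brezis} requires control of $|A\bud|$, i.e.\ the full $H^2$ norm of the velocity difference, which is exactly what the anisotropic dissipation fails to provide (only $u^2$, not $u^1$, lands in $L^2_{\mathrm{loc}}(H^2)$). This is why the paper's anisotropic uniqueness proof abandons the Br\'ezis--Gallouet route of Theorem \ref{uniqueness_visc} and instead runs the pure Yudovich scheme: H\"older with exponent $p$, the bound $X^{2-2/p}$ with $X^2=|\bud|^2+\|\xid\|^2+\eta^2$, the $\sqrt{p-1}$ growth of $\|\nabla\bu_\ell\|_{p}$ from \eqref{grad_u_sqrt_L}, and a passage $p\to\infty$ on time intervals of length at most $1/(4CM)$, iterated to cover $[0,T]$. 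It also needs the pointwise identity $\be_1\cdot\partial_1\bud=-\be_2\cdot\partial_2\bud$ to rewrite $(\triangle\xid\be_2,\bud)$ so that only $\partial_1\bud$ must be absorbed by the dissipation. You correctly identify this anisotropic bookkeeping as the main labor, but you should drop \eqref{brezis} from the toolkit there.
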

In the present work, we improve the above result by weakening the requirements on the initial data needed for the uniqueness portion of the theorem.  To begin with, we weaken the notion of solution by making the following definition.

\begin{definition}[Weak Solutions for the Anisotropic Case]\label{def:soln_aniso}
   Let $T>0$.  Let $\theta_0\in L^2$, $\omega_0\equiv\nabla^\perp\cdot\bu_0\in L^2$.  We say that $(\bu,\theta)$ is a weak solution to \eqref{bouss_aniso} on the interval $[0,T]$ if $\omega\in L^\infty([0,T];L^2)\cap C_w([0,T];L^2)$ and $\theta\in L^\infty([0,T];L^2)\cap C_w([0,T];L^2),$  $u^2\in L^2([0,T],H^2)$, $\pd{\bu}{t}\in L^1([0,T], V')$, $\pd{\theta}{t}\in L^1([0,T],H^{-2})$ and also $(\bu,\theta)$ satisfies \eqref{bouss_aniso} in the weak sense; that is, for any $\Phi$, $\vphi$, chosen as in \eqref{test_fcns}, it holds that
\begin{subequations}\label{bouss_aniso_weak_form}
\begin{align}
   &\quad\notag
   -\int_0^T(\bu(s),\Phi'(s))\,ds+\nu\int_0^T(\partial_1\bu(s),\partial_1\Phi(s))\,ds
   +\sum_{j=1}^2\int_0^T(u^j\bu,\partial_j\Phi)\,ds
   \\&\label{}
    = (\bu_0,\Phi(0))+\int_0^T(\theta(s)\be_2,\Phi(s))\,ds
   \\\notag\\&
   -\int_0^T(\theta(s),\vphi'(s))\,ds + \int_0^T(\theta\bu,\nabla\vphi)\,ds = (\theta_0,\vphi(0)),
\end{align}
\end{subequations}
where $'\equiv \pd{}{s}$.
\end{definition}
\begin{remark}
Again following standard arguments as in the theory of NSE \cite{Temam_2001_Th_Num} one can show that the above system is equivalent to the functional form
\begin{subequations}\label{e:functional_nu_horizontal}
  \begin{align}\label{e:funct_1_horizontal}
  \pd{\bu}{t} + \nu \partial_{1}^2\bu + B(\bu,\bu) &= P_\sigma(\theta\be_2)\quad \mbox{in}\quad L^2([0,T], V')\quad
  \mbox{and}\\\label{e:funct_2_horizontal}
   \pd{\theta}{t} + \mathcal{B}(\bu,\theta) &= 0\quad \mbox{in}\quad L^2([0,T], H^{-2}).
   \end{align}
   \end{subequations}
\end{remark}
We now state and prove our main results for the system  $\eqref{bouss_aniso}$ ($P_{\nu_x,0}^0$).  The global existence and regularity results will be stated in the theorem below and the uniqueness theorem will follow.
\begin{theorem}[Global Existence and Regularity]\label{EU_aniso}
   Let $T>0$ be given.  Let $\theta_0\in L^2$ and  $\omega_0\in L^2$.  Then, the following hold:
   \begin{enumerate}
   \item There exists a weak solution to \eqref{bouss_aniso} ($P_{\nu_x,0}^0$) in the sense of Definition \ref{def:soln_aniso}.
  \item  If $\omega_0\in L^p$,  and $\theta_0\in L^p$,  with  $p\in [2,\infty)$ fixed, then this weak solution satisfies $\omega\in L^\infty([0,T],L^p)$ and  $\theta\in L^\infty([0,T],L^p)$.
  \item Furthermore, if  $\omega_0\in \sqrt{L}$ and $\theta_0\in L^\infty$, then there exists a solution $\omega\in L^\infty([0,T], \sqrt{L})\cap C_w([0,T], L^2)$, $\pd{\bu}{t}\in L^2([0,T], V')$ and $\theta\in L^\infty([0,T],L^\infty)\cap C([0,T],w\mbox{\tac-} L^\infty)$ (where $w\mbox{\tac-} L^\infty$ denotes the weak-$*$ topology on $L^\infty$) with $\pd{\theta}{t}\in L^\infty([0,T], H^{-1})$.

  \end{enumerate}
\end{theorem}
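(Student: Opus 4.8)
The plan is to obtain the solution as a limit of smooth solutions of a doubly-regularized system, and to read off every claimed bound from \emph{a priori} estimates that are uniform in the regularization parameters. I would first mollify the data to smooth divergence-free $\bu_0^m,\theta_0^m$ with $\|\theta_0^m\|_p\le\|\theta_0\|_p$ and $\|\omega_0^m\|_p\le\|\omega_0\|_p$ for all $p\in[2,\infty]$ (hence $\|\omega_0^m\|_{\sqrt L}\le\|\omega_0\|_{\sqrt L}$), with $\omega_0^m\to\omega_0$, and consider \eqref{bouss_aniso} with the viscosity $\nu\partial_1^2\bu$ replaced by $\nu\partial_1^2\bu+\varepsilon\triangle\bu$ and a diffusion term $\kappa\triangle\theta$ added to the temperature equation, $\varepsilon,\kappa\in(0,1)$. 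This system has coercive (though still anisotropic) viscosity and strictly positive diffusion, hence is globally well-posed with smooth solutions $(\bu_{\varepsilon,\kappa,m},\theta_{\varepsilon,\kappa,m})$ by the same arguments that prove Theorem \ref{thm:diffusion} (only coercivity $\ge\varepsilon|\nabla\bu|^2$ is used). On these honestly smooth solutions every computation below is rigorous, and one then lets $m\to\infty$ and $\varepsilon,\kappa\to0$.

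For the uniform estimates, the basic energy identity is $\tfrac12\tfrac{d}{dt}|\bu|^2+\nu|\partial_1\bu|^2+\varepsilon\|\bu\|^2=(\theta\be_2,\bu)$, while testing the $\theta$-equation by $|\theta|^{p-2}\theta$ gives $\tfrac1p\tfrac{d}{dt}\|\theta\|_p^p=-\kappa(p-1)\int|\theta|^{p-2}|\nabla\theta|^2\,d\bx\le0$, so $\|\theta(t)\|_p\le\|\theta_0\|_p$ for every $p\in[2,\infty]$. Testing the vorticity equation \eqref{bouss_aniso_vort} (augmented by $\varepsilon\triangle\omega$) by $|\omega|^{p-2}\omega$: the transport term drops, the viscous terms are coercive, and on the right one integrates by parts and uses Young's and H\"older's inequalities,
\begin{align*}
\Big|\int\partial_1\theta\,|\omega|^{p-2}\omega\,d\bx\Big|
&=(p-1)\Big|\int\theta\,|\omega|^{p-2}\partial_1\omega\,d\bx\Big|\\
&\le\frac{\nu(p-1)}{2}\int|\omega|^{p-2}|\partial_1\omega|^2\,d\bx+\frac{p-1}{2\nu}\|\theta\|_p^2\,\|\omega\|_p^{p-2}.
\end{align*}
Absorbing the first term into the viscous term and using $\|\theta\|_p\le\|\theta_0\|_p$ gives $\tfrac{d}{dt}\|\omega\|_p^2\le\tfrac{p-1}{\nu}\|\theta_0\|_p^2$, hence
\[
\|\omega(t)\|_p^2\le\|\omega_0\|_p^2+\frac{(p-1)T}{\nu}\|\theta_0\|_p^2,
\]
uniformly in $\varepsilon,\kappa,m$; this proves part (2). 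For part (3), inserting $\|\omega_0\|_p^2\le(p-1)\|\omega_0\|_{\sqrt L}^2$ and $\|\theta_0\|_p\le\|\theta_0\|_\infty$, dividing by $p-1$ and taking $\sup_{p\ge2}$ yields $\|\omega(t)\|_{\sqrt L}^2\le\|\omega_0\|_{\sqrt L}^2+\tfrac{T}{\nu}\|\theta_0\|_\infty^2$. Since $\omega\in L^\infty([0,T],L^2)$ with $\int_{\nT^2}\omega\,d\bx=\int_{\nT^2}\omega_0\,d\bx=0$, the Biot--Savart law gives $\bu\in L^\infty([0,T],V)$ uniformly; and since $\partial_1\omega\in L^2([0,T],L^2)$ uniformly (the $p=2$ estimate), the identity $\partial_1\omega=\partial_1^2u^2-\partial_1\partial_2u^1=\partial_1^2u^2+\partial_2^2u^2=\triangle u^2$ (using $\nabla\cdot\bu=0$) gives $u^2\in L^2([0,T],H^2)$ uniformly. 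Finally $\pd{\bu}{t}=-B(\bu,\bu)+\nu\partial_1^2\bu+\varepsilon\triangle\bu+P_\sigma(\theta\be_2)$ is bounded in $L^2([0,T],V')$ (the $\varepsilon$-term of norm $O(\sqrt\varepsilon)$), and $\pd{\theta}{t}=-\mathcal{B}(\bu,\theta)+\kappa\triangle\theta$ is bounded in $L^\infty([0,T],H^{-2})$; when $\theta_0\in L^\infty$ the bound $\|\mathcal{B}(\bu,\theta)\|_{H^{-1}}\le\|\theta\|_\infty|\bu|$ shows the transport part is bounded in $L^\infty([0,T],H^{-1})$ and the $\kappa$-part vanishes in the limit, and one obtains an analogous negative-order bound on $\pd{\omega}{t}$.

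The passage to the limit then follows the template of Steps 1--3 in the proof of Theorem \ref{exist_weak_visc}: Banach--Alaoglu together with the Aubin compactness theorem provide a subsequence converging strongly in $L^2([0,T],H)$ and in the appropriate weak and weak-\tac{} senses (including $u^2\rightharpoonup u^2$ in $L^2([0,T],H^2)$ and $\varepsilon\triangle\bu,\kappa\triangle\theta\to0$), which is enough to pass to the limit in the bilinear terms and obtain a weak solution satisfying \eqref{bouss_aniso_weak_form}--\eqref{e:functional_nu_horizontal}; weak lower semicontinuity transfers the $L^p$, $\sqrt L$ and $L^\infty$ bounds to the limit. The statements $\omega,\theta\in C_w([0,T],L^2)$ follow from the Arzel\`a--Ascoli argument already used in Section \ref{s:P_nu_zero_zero} (relative weak-$L^2$ compactness from the $L^\infty([0,T],L^2)$ bounds, equicontinuity of $t\mapsto(\omega(t),\phi)$ and $t\mapsto(\theta(t),\phi)$ for trigonometric-polynomial $\phi$ coming from the time-derivative bounds, then density in $L^2$), and $\theta\in C([0,T],w\mbox{\tac-}L^\infty)$ follows by combining the uniform bound $\|\theta(t)\|_\infty\le\|\theta_0\|_\infty$ with $\theta\in C_w([0,T],L^2)$ through an $L^1$-density argument for the test function.

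The main obstacle, as I see it, is the vorticity $L^p$ estimate and its use: the buoyancy forcing $\partial_1\theta$ in \eqref{bouss_aniso_vort} is a \emph{horizontal} derivative, and it is precisely the horizontal viscosity $\nu\partial_1^2\omega$ that produces the weighted quantity $\int|\omega|^{p-2}|\partial_1\omega|^2$ needed to absorb it after integration by parts --- this structural match is what makes the anisotropic problem tractable at all. One must also keep the constant in the resulting bound exactly linear in $(p-1)$ so that the $\sqrt L$-norm is preserved in the limit; this is the point that lets us drop the hypothesis $\theta_0\in H^s$, $s>1/2$, of Theorem \ref{t:Paicu_exist}. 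The remaining issues --- performing the $L^p$ computations on genuinely smooth approximations and establishing the two continuity statements --- are routine, but they must be carried out in the order above.
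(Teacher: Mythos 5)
Your proposal is correct and follows essentially the same route as the paper: regularize with an added coercive viscosity and artificial diffusion plus mollified data, derive the uniform $L^p$ vorticity estimate by integrating $\partial_1\theta$ by parts and absorbing into the horizontal dissipation (keeping the constant linear in $p-1$ for the $\sqrt{L}$ bound), recover $u^2\in L^2_TH^2$ from $\partial_1\omega=\triangle u^2$, and pass to the limit via Banach--Alaoglu/Aubin with the Arzel\`a--Ascoli argument for the weak continuity statements. The only cosmetic difference is that you add an isotropic $\varepsilon\triangle\bu$ where the paper adds only an artificial vertical viscosity $\nu_y^{(n)}\partial_2^2\bu$; both yield the same uniform estimates.
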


\begin{proof}   The outline of our proof is as follows.  We begin by generating approximate sequence of solutions $(\bun,\thetan)$ to $P^0_{\nu_x,0}$ by adding artificial {\em vertical viscosity} $\nu_y^{(n)}>0$,  artificial diffusion $\kappa^{(n)}>0$, where $\kappa^{(n)}, \nu_y^{(n)}\maps0$ as $n\maps \infty$, and also by smoothing the initial data.  Global existence of solutions to the {\em fully viscous} system $P^0_{\nu,\kappa}$, given smoothed initial condition is guaranteed (see, Theorem \ref{thm:diffusion} part (iii)).  Next, we establish uniform bounds, for the relevant norms of the approximate sequence of solutions which are independent of $n$ using basic energy estimates.  We then employ the Aubin Compactness Theorem (see, e.g., \cite{Temam_2001_Th_Num, Constantin_Foias_1988}) to show that the sequence of approximate solutions has a subsequence converging in appropriate function spaces.  This limit will serve as a candidate weak solution.  We then show that one can pass to the limit  to show that the candidate functions satisfy the weak formulation \eqref{bouss_aniso_weak_form}.  Then we establish some regularity results.

\begin{list}{}{\leftmargin=0em}
\item {\em {\bf Step 1: } Generating solutions to the regularized system given smoothed initial data.}

Let $\nu_x > 0$ be fixed and let $\kappa^{(n)}, {\nu^{(n)}_y}$ be a sequence of positive numbers, converging to zero.  In fact, we can also assume that both $\kappa^{(n)}\leq \nu_x$ and ${\nu^{(n)}_y}\leq\nu_x$.
Let $(\bun_0, \thetan_0)$ is a sequence of smooth initial data such that $\bun_0\maps\bu_0$ in $V$ and $\thetan_0\maps\theta_0$ in $L^2$, chosen in such a way that for each $n\in\nN$, $\|\bun_0\|\leq \|\bu_0\|+ \frac{1}{n}$ and $|\thetan_0|\leq |\theta_0|+\frac{1}{n}$.    Notice, since $\bun_0$ is smooth it follows that $\nabla^\perp\cdot \bun_0 = \omegan_0$ and so $\omegan_0$ are smooth functions bounded in $L^2$.  From Theorem \ref{thm:diffusion} part (iii),  by slightly modifying the proof of this result to account for values of the viscosity which differ in the horizontal and vertical directions, we have that for each $n$, there exist $(\bun, \thetan)$ satisfying the following equations:

\begin{subequations}\label{bouss_aniso+nu_y_weak_form}
\begin{align}
   \quad
   -\int_0^T(\bun(s),\Phi'(s))\,ds&+\nu_x\int_0^T(\partial_1\bun(s),\partial_1\Phi(s))\,ds
   \\\notag
+ \nu^{(n)}_y&\int_0^T(\partial_2\bun(s),\partial_2\Phi(s))\,ds
   +\sum_{j=1}^2\int_0^T(u^{j,(n)}\bun,\partial_j\Phi)\,ds
   \\&\notag
    = (\bun_0,\Phi(0))+\int_0^T(\thetan(s)\be_2,\Phi(s))\,ds
   \\\notag\\
   -\int_0^T(\thetan(s),\vphi'(s))\,ds &+    \kappa^{(n)}\int_0^T(\nabla\thetan(s),\nabla\Phi(s))\,ds           +\int_0^T(\thetan\bun,\nabla\vphi)\,ds \\&= (\thetan_0,\vphi(0)).
\end{align}
\end{subequations}

\item {\em {\bf Step 2: } A priori estimates  and using compactness arguments to prove convergence of a subsequence.}

We next establish {\em a priori} estimates on $(\bun,\thetan)$ uniformly in $n$ (independent of $\nu_y^{(n)}$ and $\kappa^{(n)}$).  From the above smoothness properties of $(\bun,\thetan)$, we can now derive {\em a priori} estimates using basic energy estimates in which the derivatives and integrations are well defined.  First, one can obtain, because div $\bun$=0, that
\begin{align}\label{eq:thetan_l2_est}
   |\thetan(t)|\leq|\thetan_0|\leq |\theta_0| + \frac{1}{n},
\end{align}
and
\begin{align*}
|\bun(t)|^2+2\nu_x\int_0^t|\partial_1\bun(\tau)|^2\,d\tau &+ 2\nu_y^{(n)}\int_0^t|\partial_2\bun(\tau)|^2\,d\tau\\
&\leq (|\bu_0|+ \frac{1}{n}+t(|\theta_0|+ \frac{1}{n}))^2.
\end{align*}
The calculations above are justified by replacing the test functions by $\thetan$ and $\bun$ in \eqref{bouss_aniso+nu_y_weak_form} and then integrating by parts.

Using the evolution equation of the vorticity, namely the equation
\begin{align}\label{eq:omega_n}
   \partial_t \omegan+\bun\cdot\nabla\omegan-\nu_x\partial_1^2\omegan-\nu_y^{(n)}\partial_2^2\omegan =\thetan_x,
\end{align}
we also have
\begin{align*}
   \frac{1}{2}\frac{d}{dt}|\omegan|^2+\nu_x|\partial_1^2\omegan| + \nu_y^{(n)}|\partial_2^2\omegan|
   &=
   -(\thetan,\partial_1\omegan)
   \\&\quad
   \leq\frac{\nu_x}{2}|\partial_1\omegan|^2+\frac{1}{2\nu_x}|\thetan|^2.
\end{align*}
Integrating this gives
\begin{align}\label{omega_LiL2_omega1_L2H1}
   |\omegan|^2+\nu_x\int_0^t|\partial_1\omegan|^2\,d\tau &+ 2\nu_y^{(n)}\int_0^t|\partial_2\omegan|^2\,d\tau\\&\leq \left(|\omega_0|+ \frac{1}{n}\right)^2+\frac{t}{2\nu_x}\left(|\theta_0|+ \frac{1}{n}\right)^2,
\end{align}
which implies that $\omegan$ is uniformly bounded in $L^\infty([0,T],L^2)$ with respect to $n$, and therefore $\bun$ is uniformly bounded in $L^\infty([0,T],V)$ with respect to $n$.  Furthermore, \eqref{omega_LiL2_omega1_L2H1} shows that $\partial_1\omegan$ is uniformly bounded in $L^2([0,T],L^2)$ with respect to $n$.  We also observe that
\begin{align*}
   \partial_1\omegan
   &=
   \partial_1^2 u^{2,(n)}- \partial_1\partial_2 u^{1,(n)}
   =\partial_1^2u^{2,(n)}+\partial_2^2u^{2,(n)} = \triangle u^{2,(n)}.
\end{align*}
Therefore, $\triangle u^{2,(n)}$ is uniformly bounded in $L^2([0,T],L^2)$, so that $u^{2,(n)}$ is uniformly bounded in $L^2([0,T],H^2)$ by elliptic regularity, and thus $\nabla u^{2,(n)}$ is uniformly bounded in $ L^2([0,T],H^1)$, all with respect to $n$.
Next we derive uniform bounds on the derivatives $(\pd{\bun}{t})_{n\in \nN}$.  Note that
%
%
$$\pd{\omegan}{t} = -\mathcal{B}(\omegan,\bun) + \nu_x\partial_1^2\omegan + \nu_y^{(n)} \partial_2^2\omegan + \partial_1\thetan$$
Thus,
\begin{equation}\label{eq:w_t_H2}
\aligned
   \norm{\pd{\omegan}{t}}_{H^{-2}}
   &\leq
   \sup_{\|\bw\|_{\dot{H}^2}=1}\abs{\ip{\mathcal{B}(\omegan,\bun)}{\bw}} + \nu_x \sup_{\|\bw\|_{\dot{H}^2}=1}\abs{\ip{  \partial_1^2\omegan   }{\bw}}   \\
   &\quad+  \nu_y^{(n)} \sup_{\|\bw\|_{\dot{H}^2}=1}\abs{\ip{ \partial_2^2\omegan   }{\bw}}  +\sup_{\|\bw\|_{\dot{H}^2}=1}\abs{\ip{   \partial_1\thetan   }{\bw}}   \\
   &=
    \sup_{\|\bw\|_{\dot{H}^2}=1}\abs{\ip{\omegan\bun}{\nabla\bw}} + \nu_x \sup_{\|\bw\|_{\dot{H}^2}=1}\abs{\ip{  \omegan   }{\partial_1^2\bw}}   \\
   &\quad+  \nu_y^{(n)} \sup_{\|\bw\|_{\dot{H}^2}=1}\abs{\ip{ \omegan   }{  \partial_2^2\bw}}  +\sup_{\|\bw\|_{\dot{H}^2}=1}\abs{\ip{  \thetan   }{\partial_1\bw}}   \\
         &\leq
    |\omegan||\bun|^{1/2}\|\bun\|^{1/2} + \nu_x|\omegan| + \nu_x |\omegan| + |\thetan| ,
\endaligned
\end{equation}
Since each of the terms on the right-hand side of the inequality above is bounded independently of $n$, we deduce by the Calder\'on-Zygmund elliptic estimate \eqref{Calderon} that $\partial_t\bun$ is bounded in $L^\infty([0,T],V')$ independently of $n$.  Similarly, one can show easily that
\begin{equation}
\norm{\pd{\thetan}{t}}_{H^{-2}} \leq |\thetan||\bun|^{1/2}\|\bun\|^{1/2},
\end{equation}
which implies also that $\pd{\thetan}{t}$ is bounded in  $L^\infty([0,T],H^{-2})$ independently of $n$.  To summarize, we have from the above results that
\begin{subequations}\label{aniso_bounds}
\begin{align}
 (\thetan)_{n\in\nN}\quad &\mbox{ is bounded in } \quad L^\infty([0,T], L^2), \\
(\bun)_{n\in\nN} \quad&\mbox{ is bounded in } \quad L^\infty([0,T], V), \\
(u^{2,(n)})_{n\in\nN}\quad &\mbox{ is bounded in }\quad L^2([0,T], H^2)\\
\left(\pd{\bun}{t}\right)_{n\in\nN} \quad&\mbox{ is bounded in } \quad L^\infty([0,T], V'), \\
\left(\pd{\thetan}{t}\right)_{n\in\nN} \quad&\mbox{ is bounded in } \quad L^\infty([0,T], H^{-2}).
\end{align}
\end{subequations}
 Using  Banach-Alaoglu and Aubin Compactness theorems (see, e.g., \cite{Temam_2001_Th_Num, Constantin_Foias_1988}), the uniform bounds with respect to $n$ as stated in \eqref{aniso_bounds} implies that one can extract a further subsequence (which we relabel with the index $n$ if necessary) such that

\begin{subequations}\label{wk_conv_aniso}
\begin{align}
\label{wk_theta_LiH_aniso}
\thetan\rightharpoonup\theta &\quad\text{weakly in }L^2([0,T],L^2)\text{ and weak-$*$ in }L^\infty([0,T],L^2).\\
\label{st_u_L2H_aniso}
\bun\maps\bu &\quad\text{strongly in }L^2([0,T],H),\\
\label{wk_u_L2V_aniso}
\bun\rightharpoonup\bu &\quad\text{weakly in }L^2([0,T],V)\text{ and weak-$*$ in }L^\infty([0,T],V),\\
\label{wk_u_squared_aniso}
u^{2,(n)}\rightharpoonup \quad& u^{2,(n)} \quad\text{weakly in }L^2([0,T],H^2),\\
\label{wk_du_dt_aniso}
\pd{\bun}{t}\rightharpoonup\pd{\bu}{t} &\quad\text{weakly in }L^2([0,T],V')\text{ and weak-$*$ in }L^\infty([0,T],V'),\\
\label{wk_dtheta_dt_aniso}
\pd{\thetan}{t}\rightharpoonup\pd{\theta}{t} &\quad\text{weakly in }L^2([0,T],H^{-2})\text{ and weak-$*$ in }L^\infty([0,T],H^{-2}).
\end{align}
\end{subequations}

\item {\em {\bf Step 3: } Pass to the limit in the system.}

It remains to show that  \eqref{wk_conv_aniso} is enough to pass to the limit in \eqref{bouss_aniso+nu_y_weak_form} to show that $(\bu,\theta)$ satisfies \eqref{bouss_aniso_weak_form}.    To do this, in accordance with Remark~\ref{test_fcns_are_trig_polys} and Definition~ \ref{def:soln_aniso}, we only consider test functions of the form \eqref{test_fcns}, which we note is sufficient for showing that $(\bu,\theta)$ satisfies \eqref{bouss_aniso_weak_form}. For the linear terms in \eqref{bouss_aniso+nu_y_weak_form}, we have, by the weak convergence in \eqref{wk_u_L2V_aniso}  and  \eqref{wk_theta_LiH_aniso}, as $n\maps \infty$ (that is, $\kappa^{(n)} , \nu_y^{(n)}\maps 0$),
\begin{align*}
\int_0^T(\bun(s), \Gamma'_{\bm}(s) e^{2\pi i\bm\cdot \bx})\,ds
  &\maps
  \int_0^T(\bu(s), \Gamma'_{\bm}(s) e^{2\pi i\bm\cdot \bx})\,ds
   ,\\
\nu_x\int_0^T(\partial_1\bun(s), \Gamma_{\bm}(s) \partial_1e^{2\pi i\bm\cdot \bx}      )\,ds
&\maps
\nu_x\int_0^T(\partial_1\bu(s), \Gamma_{\bm}(s) \partial_1e^{2\pi i\bm\cdot \bx}      )\,ds
,\\
\int_0^T(\thetan(s)\be_2,\Gamma_{\bm}(s) e^{2\pi i\bm\cdot \bx} )\,ds
&\maps
\int_0^T(\theta(s)\be_2,\Gamma_{\bm}(s) e^{2\pi i\bm\cdot \bx} )\,ds
,\\
\int_0^T(\thetan(s),e^{2\pi i\bm\cdot\bx})\chi_{\bm}'(s)\,ds
&\maps
\int_0^T(\theta(s),e^{2\pi i\bm\cdot\bx})\chi_{\bm}'(s)\,ds
,\\
 \kappa^{(n)}\int_0^T(\partial_2\bun(s), \Gamma_{\bm}(s) \partial_2e^{2\pi i\bm\cdot \bx}    )\,ds &  \maps
0,\\
\kappa^{(n)}\abs{\int_0^T((\theta_\kappa(s), e^{2\pi i\bm\cdot \bx}\chi_{\bm}(s)))\;ds}
& \leq C\sqrt{\kappa^{(n)} }\pnt{\sqrt{\kappa^{(n)}}\|\theta_\kappa\|_{L^2_TH^1_x}}
\\&\leq CK_0\sqrt{\kappa^{(n)}}
\maps
0.
\end{align*}

\bigskip


It remains to show the convergence of the remaining non-linear terms.  Let
\begin{align*}
   I(n)&:=\sum_{j=1}^2\int_0^T(u^{j,(n)}\bun, \Gamma_{\bm}(s)\partial_j e^{2\pi i\bm\cdot \bx} )\,ds-
   \sum_{j=1}^2\int_0^T(u^j\bu, \Gamma_{\bm}(s) \partial_j e^{2\pi i\bm\cdot \bx} )\,ds
   \\
   J(n)&:=\int_0^T\adv{\bun(s)\thetan(s),\chi_{\bm}(s)\nabla e^{2\pi i\bm\cdot \bx}}\,ds-\int_0^T\adv{\bu(s)\theta(s),\chi_{\bm}(s)\nabla e^{2\pi i\bm\cdot \bx}}\,ds.
\end{align*}

To show $I(n)\maps0$ as $n\maps\infty$, we write $I(n)=I_1(n)+I_2(n)$, the definitions of which are given below.
We  have
\begin{align*}
   |I_1(n)|
   &:=
   \abs{\sum_{j=1}^2\int_0^T((u^{j,(n)}(s)-u^j(s) ) \bun(s),\partial_j e^{2\pi i\bm\cdot \bx})\chi_{\bm}(s)\,ds
   }\\
    &\leq
    \int_0^T |\bun(s)-\bu(s) ||\bun(s)||\nabla e^{2\pi i\bm\cdot \bx}\chi_{\bm}(s)|\,ds\\
    &\leq\|\bun-\bu\|_{L^2_TH_x}\|\bun\|_{L^\infty_TH_x}\|\nabla e^{2\pi i\bm\cdot \bx}\chi_{\bm}\|_{L^2_TL^\infty_x}\maps 0,
    \end{align*}
as $n\maps\infty$, since $\bun\maps\bu$ strongly in $L^2([0,T],H)$ and $\bun$ is uniformly bounded in $L^\infty([0,T],V)$ and hence in $L^\infty([0,T],H)$ .  Similarly, for $I_2$, we have that as $n\maps \infty$
\begin{align*}
  I_2(n)&:=
  \sum_{j=1}^2\int_0^T\left(u^j(s)(\bun(s)-\bu(s)),\partial_j e^{2\pi i\bm\cdot \bx}\right)\chi_{\bm}(s)\,ds
     \maps0.
\end{align*}

To show $J(n)\maps0$ as $n\maps\infty$, we write $J(n)=J_1(n)+J_2(n)$.  We  have
\begin{align*}
   J_1(n)
   &:=
   \int_0^T((\bun(s)-\bu(s) ) \thetan(s),\nabla e^{2\pi i\bm\cdot \bx})\chi_{\bm}(s)\,ds
     \maps0,
\end{align*}
as $n\maps\infty$, since $\bun\maps\bu$ strongly in $L^2([0,T],H)$ and $\thetan\maps\theta$ weakly in $L^2([0,T],H)$.  For $J_2$, we have
\begin{align*}
   J_2(n)&:= \int_0^T\left(\bu(s)(\thetan(s)-\theta(s)),\nabla e^{2\pi i\bm\cdot \bx}\right)\chi_{\bm}(s)\,ds
     \maps0,
\end{align*}
by the weak convergence in \eqref{wk_theta_LiH_aniso} and the fact that $\bu\in L^2([0,T],H)$.  This establishes the existence of weak solution to the system $P^0_{\nu_x,0}$ when $\bu_0\in H^1 \mbox{ and } \theta_0\in L^2$.\\

\item {\em {\bf Step 4: } Show that $\omega\in C_w([0,T];L^2)$.}

By  the Arzela-Ascoli theorem, it suffices to show that (a) $\{\omegan\}$ is a relatively weakly compact set in $L^2(\nT^2)$
for a.e $t\geq 0$ and (b) for every $\phi\in L^2(\nT^2)$ the sequence $\{(\omegan,\phi)\}$ is equicontinuous in $C([0,T])$. Condition (a) follows from the uniform boundedness of $\omegan$  in $L^2(\nT^2)$ for a.e. $t\geq 0$ given in \eqref{omega_LiL2_omega1_L2H1}.  Next, we show that condition (b) is satisfied.
We follow similar argument as in  Step 3 of Section \ref{s:P_nu_zero_zero} equation \eqref{e:continuity_theta_kappa}, where, we start by assuming that $\phi$ is a trigonometric polynomial to obtain,
\begin{align*}
&\quad
|(\omegan(t_2),\phi) - (\omegan(t_1), \phi)|\\
&\leq |\nu_x\int_{t_1}^{t_2}(\partial_1\omegan(t),\partial_1\phi)\;dt |+   |\nu_y\int_{t_1}^{t_2}(\partial_2\omegan(t),\partial_2\phi)\;dt |\\
&\quad +  |\int_{t_1}^{t_2} (\bun\cdot\nabla\phi,\omegan)\;dt|+ |\int_{t_1}^{t_2}(\thetan,\partial_x\phi)\;dt| \\
&\leq\nu_x \int_{t_1}^{t_2}  |\partial_1\omegan||\partial_1\phi| \;dt + \nu_x \int_{t_1}^{t_2}  |\omegan| |\partial_2^2\phi | \;dt\\
&\quad + \|\nabla\phi\|_\infty\int_{t_1}^{t_2}  |\bun||\omegan| \;dt +  \int_{t_1}^{t_2}  |\thetan||\nabla\phi| \;dt\\
&\leq |\nabla\phi|_\infty| |t_2-t_1|^{1/2} \nu_x\int_{t_1}^{t_2}|\partial_1\omegan|^2\;dt + |\partial_2^2\phi|_\infty||t_2-t_1|\|\omegan\|_{L^\infty_TL^2_x}\\
&\quad+ \|\nabla\phi\|_\infty |t_2-t_1|\left(\|\bun\|_{L^\infty_TL^2_x} \|\omegan\|_{L^\infty_TL^2_x}+ \|\thetan\|_{L^\infty_TL^2_x}\right),
\end{align*}
where recall we have assumed without loss of generality that $\nu_y^{(n)}<\nu_x$.
From the uniform boundedness of $\omegan$ \eqref{omega_LiL2_omega1_L2H1} and $\thetan$ \eqref{eq:thetan_l2_est}, the right-hand side can be made small when $|t_2-t_1|$ is small enough.  Thus we have that the set $\{(\omegan,\phi) \}$ is equicontinuous in $C([0,T])$.  Then one can extend this result for all test functions $\phi$ in $L^2(\nT^2)$ using a simple density argument as before.  This completes the proof of part (1) of Theorem \ref{EU_aniso}.\\

\item {\em {\bf Step 5}  Proof of part (2) of Theorem \ref{EU_aniso}.}

We choose a sequence of smooth initial data $\omegan_0\maps \omega_0$ and similarly $\thetan_0\maps \theta_0$ in every $L^p$ with $p\geq 2$ chosen in such a way that for each $n\in\nN$, $\|\omegan_0\|_p\leq \|\omega_0\|_p+ \frac{1}{n}$ and $\|\thetan_0\|_p\leq\|\theta_0\|_p+\frac{1}{n}$.   From Theorem \ref{thm:diffusion}, we obtain for each $n$, a solution $u^{(n)}\in H^3$ which then gives us $\omegan\in H^2$  which is a topological algebra, hence $|\omegan|^{p-2}\omegan \in H^{2}$. We take the inner product of  \eqref{eq:omega_n} with $|\omegan|^{p-2}\omegan$. Integrating by parts, we have
\begin{align*}
     \frac{1}{p}\frac{d}{dt}\|\omegan\|_p^p & +\nu_x(p-1)\int_{\nT^2}|\partial_1\omegan|^2|\omegan|^{p-2}\,dx + \nu_y^{(n)}(p-1)\int_{\nT^2}|\partial_2\omegan|^2|\omega|^{p-2}\,dx
   \\&\leq
   (p-1)\int_{\nT^2}|\thetan||\partial_1\omegan||\omegan|^{p-2}\,dx
   \\&\leq
   \nu_x(p-1)\int_{\nT^2}|\partial_1\omegan|^2|\omegan|^{p-2}\,dx
   +\frac{p-1}{4\nu_x}\int_{\nT^2}|\thetan|^2|\omegan|^{p-2}\,dx
   \\&\leq
   \nu_x(p-1)\int_{\nT^2}|\partial_1\omegan|^2|\omegan|^{p-2}\,dx
   +\frac{p-1}{4\nu_x}\|\thetan\|_p^2\|\omegan\|_p^{p-2}.
\end{align*}
Therefore, we have
\begin{align*}
   \frac{1}{p}\frac{d}{dt}\|\omegan\|_p^p
   \leq
   \frac{p-1}{4\nu_x}\|\thetan\|_p^2\|\omegan\|_p^{p-2}
   \leq
   \frac{p-1}{4\nu_x}\left(\|\theta_0\|_p+\frac{1}{n}\right)^2\|\omegan\|_p^{p-2}.
\end{align*}
That is,
\begin{align*}
   \frac{d}{dt}\|\omegan\|_p^2
   \leq
   \frac{p-1}{2\nu_x}\|\thetan_0\|_p^2
   \leq
   \frac{p-1}{2\nu_x}\left(\|\theta_0\|_p+\frac{1}{n}\right)^2.
\end{align*}
Integrating in time, we have
\begin{align}\label{omegan_p_bounds}
   \|\omegan(t)\|_p^2
   &\leq
   \|\omegan_0\|_p^2+\frac{p-1}{2\nu_x}\left(\|\theta_0\|_p+\frac{1}{n}\right)^2t\\
    &\leq\notag
   \left(\|\omega_0\|_p+\frac{1}{n}\right)^2+\frac{p-1}{2\nu_x}\left(\|\theta_0\|_p+\frac{1}{n}\right)^2t.
\end{align}
That is,   $\omegan$ is uniformly bounded in $L^\infty([0,T], L^p)$  for each $p \in [2, \infty)$, independent of $n$. It follows from the Banach-Alaoglu Theorem and diagonalization process,  that there exists a further subsequence which we also denote as $\omegan$ converging weak-\tac \, in  $L^\infty([0,T], L^p)$ to some limit  which we denote as $\omega$ and this  limit also enjoys the limit of the upper bound, that is
\begin{align}\label{omega_p_bound}
 \|\omega\|_p^2\leq  \left(\|\omega_0\|_p+\frac{1}{n}\right)^2+\frac{p-1}{2\nu_x}\left(\|\theta_0\|_p+\frac{1}{n}\right)^2t.
 \end{align}
 This implies that $\omega\in L^\infty([0,T];L^p)$ for all $p \in [2, \infty)$. Similarly we find that
 \begin{align}\label{thetan_lp_bound}
 \|\thetan(t)\|_p\leq\|\thetan_0\|_p\leq\|\theta_0\|_p + \frac{1}{n},
 \end{align}
 which implies that $\thetan$ converges weak-\tac \, in $L^\infty([0,T];L^p)$ to $\theta \in L^\infty([0,T];L^p)$ for all $p \in [2, \infty)$, and $\|\theta\|_{L^\infty([0,T], L^p)} \leq \|\theta_0\|_p$.\\

\item {\em {\bf Step 6}  Proof of part (3) of Theorem \ref{EU_aniso}.}

To prove Theorem \ref{EU_aniso} {\em part (3) } we divide both sides of  \eqref{omega_p_bound} by $p-1$ and then taking the supremum over all $p>2$ of both sides, we get that $\omega\in L^\infty([0,T], \sqrt{L})$ provided that $\omega_0\in \sqrt{L}$ and $\theta_0\in L^\infty$. Next, we want to show that $\theta\in C([0,T];w\mbox{\tac-} L^\infty)$.  We will use the Arzela-Ascoli theorem as in Step 4.   Notice that if $\theta_0\in L^\infty$ then \eqref{thetan_lp_bound} holds uniformly for all $p\in[2,\infty)$ and hence
\begin{align}\label{thetan_infty_bound}
 \|\thetan(t)\|_\infty\leq\|\theta_0\|_\infty + \frac{1}{n}.
 \end{align}
This implies that the sequence $\thetan(t)$ is a relatively compact set in the weak$-*$ topology of $L^\infty([0,T]\times\nT^2)$.   It suffices to show that the sequence $\{\left(\thetan,\phi\right)\}$ is equicontinuous in $C([0,T])$ for every $\phi\in L^1$.  It follows automatically from the previous result and the density of $L^2(\nT^2)$ in $L^1(\nT^2)$ that $\theta\in C_w([0,T],L^2) $.
Finally, we would like to show that $\pd{\theta}{t}\in L^\infty([0,T], H^{-1})$ and hence $\pd{\theta}{t}\in L^2([0,T], H^{-1})$.  Since $\omega\in L^\infty([0,T],\sqrt{L})$, we have in particular that $\omega\in L^\infty([0,T],L^3)$, and hence $\bu\in L^\infty([0,T],W^{1,3})\subset L^\infty([0,T],L^\infty)$ by \eqref{Calderon}, \eqref{poincare}, and the Sobolev Embedding Theorem.  From equation \eqref{e:funct_2_horizontal}, using \eqref{B_theta:def} and the fact that $\theta\in L^\infty([0,T],L^2)$,  we obtain,
\begin{align} \label{e:strong_aniso_D_theta}
\norm{\pd{\theta}{t}}_{H^{-1}} = \sup_{\|w\| = 1} \abs{\ip{\mathcal{B}(\bu,\theta)}{w}} \leq\|\bu\|_\infty|\theta| <\infty \text{ a.e } t\in [0,T].
\end{align}
This completes the proof of part (3) of Theorem \ref{EU_aniso}.
\qedhere
\end{list}
\end{proof}



\begin{theorem}[Uniqueness for the Anisotropic Case]
   Let $\theta_0\in L^\infty$, $\omega_0\in \sqrt{L}$.  Then, for every $T>0$, there exists a unique  solution  $\omega\in L^\infty([0,T], \sqrt{L})\cap C_w([0,T];L^2)$ and $\theta\in L^\infty([0,T], L^\infty)\cap C([0,T]),w\mbox{\tac-} L^\infty)$  to \eqref{bouss_aniso}.
\end{theorem}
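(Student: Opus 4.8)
\noindent
The plan is to follow the Yudovich-type argument used for $P^0_{\nu,0}$ in Theorem~\ref{uniqueness_visc} --- built on the substitution $\theta=\triangle\xi$ --- adapting it to the anisotropy. Let $(\bu_\ell,\theta_\ell)$, $\ell=1,2$, be two solutions furnished by Theorem~\ref{EU_aniso}; put $\xi_\ell:=\triangle^{-1}\theta_\ell$ with $\int_{\nT^2}\xi_\ell\,d\bx=0$, and $\bud:=\bu_1-\bu_2$, $\xid:=\xi_1-\xi_2$, so that $\triangle\xid=\theta_1-\theta_2$ and $\bud(0)=\xid(0)=0$. I record the facts I will use: (i) $\theta_\ell\in L^\infty([0,T],L^2\cap L^\infty)$, hence $\xid\in L^\infty([0,T],H^2)$ with $\|\xid\|_{H^2}\le C|\theta_0|$; (ii) $u^2_\ell\in L^2([0,T],H^2)$ --- this is precisely where the horizontal viscosity enters --- so that, expressing every entry of $\nabla\bu_\ell$ as a combination of $\nabla u^2_\ell\in H^1$ and $\omega_\ell\in\sqrt{L}$ through the incompressibility relations $\partial_1 u^1_\ell=-\partial_2 u^2_\ell$ and $\partial_2 u^1_\ell=\partial_1 u^2_\ell-\omega_\ell$, inequality \eqref{CZ_est} gives
\[
   \|\nabla\bu_\ell(t)\|_{L^q}\le C\sqrt q\,\bigl(\|u^2_\ell(t)\|_{H^2}+\|\omega_\ell(t)\|_{\sqrt{L}}\bigr),\qquad q\ge2;
\]
(iii) $\partial_2\bud^2=-\partial_1\bud^1$, so $|\nabla\bud^2|=|\partial_1\bud|$ (as $L^2$ functions).

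\noindent
Forming the difference equations (the analogue of \eqref{e:functional_nu_diff}, with $\nu A$ replaced by the horizontal dissipation), taking the $H$-inner product of the $\bud$-equation with $\bud$ and the $H^{-1}$-action of the $\triangle\xid$-equation on $\xid$ --- all pairings justified by the Lions--Magenes lemma, all products below legitimate since $\theta_\ell\in L^\infty$ --- one obtains, with $y:=|\bud|^2+\|\xid\|^2$,
\begin{align*}
   \tfrac12\tfrac{d}{dt}|\bud|^2+\nu|\partial_1\bud|^2&=-\ip{B(\bud,\bu_1)}{\bud}+(\triangle\xid\,\be_2,\bud),\\
   \tfrac12\tfrac{d}{dt}\|\xid\|^2&=\int_{\nT^2}\theta_1\,\bud\!\cdot\!\nabla\xid\,d\bx-\int_{\nT^2}(\triangle\xid)\,\bu_2\!\cdot\!\nabla\xid\,d\bx .
\end{align*}
The buoyancy term is integrated by parts and then, by (iii), absorbed into the dissipation: $(\triangle\xid\,\be_2,\bud)=-\int\nabla\xid\cdot\nabla\bud^2\,d\bx\le\|\xid\|\,|\partial_1\bud|\le\tfrac{\nu}2|\partial_1\bud|^2+\tfrac1{2\nu}\|\xid\|^2$; this is the anisotropic replacement for the step $\|\xid\|\|\bud\|\le\tfrac\nu2\|\bud\|^2+\cdots$ of Theorem~\ref{uniqueness_visc}. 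The term $\int\theta_1\,\bud\cdot\nabla\xid$ is $\le\|\theta_0\|_\infty|\bud|\,\|\xid\|$, and one integration by parts turns the last term into $\int\nabla\bu_2:\nabla\xid\otimes\nabla\xid$. For this term and for $\ip{B(\bud,\bu_1)}{\bud}=\int(\bud\cdot\nabla)\bu_1\cdot\bud$, H\"older's inequality with exponents $(p,2p',2p')$, interpolation of $\bud$ and $\nabla\xid$ between $L^2$ and $L^{2p}$, the bounds $\|\bud\|_{L^{2p}}\le C\sqrt p\|\bud\|_{H^1}$ and $\|\nabla\xid\|_{L^{2p}}\le C\sqrt p\|\xid\|_{H^2}$ from \eqref{CZ_est}, and the estimate in (ii) for $\|\nabla\bu_\ell\|_{L^p}$ --- the residual powers of $p$ from the interpolation being bounded uniformly in $p\ge2$ by a data-dependent constant --- yield
\[
   \bigl|\ip{B(\bud,\bu_1)}{\bud}\bigr|+\Bigl|\int\nabla\bu_2:\nabla\xid\otimes\nabla\xid\Bigr|\le C\sqrt p\,\bigl(g(t)+M\bigr)\,y^{\,1-\frac1{p-1}},
\]
where $g(t):=\|u^2_1(t)\|_{H^2}+\|u^2_2(t)\|_{H^2}\in L^1([0,T])$ and $M$ depends only on $T$ and the data norms. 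Adding the two identities and discarding the nonnegative leftover dissipation, $y$ is absolutely continuous and satisfies, for \emph{every} $p\ge2$,
\[
   \frac{dy}{dt}\le C_1\,y+C_2\sqrt p\,\bigl(g(t)+M\bigr)\,y^{\,1-\frac1{p-1}}.
\]

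\noindent
To close I would use the Yudovich--Osgood device from the end of the proof of Theorem~\ref{uniqueness_visc}. For $\eta>0$ set $z:=y+\eta$ and $\epsilon:=\tfrac1{p-1}\in(0,1]$; dividing the differential inequality by $z^{1-\epsilon}>0$, multiplying by $\epsilon$, and using $y\le z$, $y^{1-\epsilon}\le z^{1-\epsilon}$ and $\epsilon\sqrt p\le\sqrt{2\epsilon}$ gives $\tfrac{d}{dt}z^{\epsilon}\le C_1\epsilon z^{\epsilon}+C_3\sqrt\epsilon\,(g(t)+M)$. Gr\"onwall's inequality and $z(0)=\eta$ then give, for $t\in[0,T]$, $z(t)\le e^{C_1T}\bigl(\eta^{\epsilon}+C_3\sqrt\epsilon\,(G+MT)\bigr)^{1/\epsilon}$ with $G:=\int_0^T g\,ds<\infty$. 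Letting $\eta\to0$ and then $\epsilon\to0^+$ --- the factor $\epsilon^{1/(2\epsilon)}$ tending to $0$ faster than $\bigl(C_3\sqrt2\,(G+MT)\bigr)^{1/\epsilon}$ can grow --- forces $y\equiv0$ on $[0,T]$, i.e.\ $\bu_1\equiv\bu_2$ and $\theta_1=\triangle\xi_1=\triangle\xi_2=\theta_2$.

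\noindent
The main obstacle is the interplay of two difficulties: only the horizontal part of $\nabla\bud$ is controlled by the dissipation, and the $\sqrt{L}$ class is logarithmically borderline. The substitution $\theta=\triangle\xi$ handles the first, recasting the velocity--temperature coupling as $-\int\nabla\xid\cdot\nabla\bud^2\,d\bx$, which the horizontal dissipation controls exactly via $\partial_2\bud^2=-\partial_1\bud^1$. The second is handled by point (ii): had one bounded $\|\nabla\bu_\ell\|_{L^p}$ by the Calder\'on--Zygmund estimate $Cp\|\omega_\ell\|_{L^p}$ one would pick up $p^{3/2}$ in place of $\sqrt p$ in the differential inequality, and then both the Osgood criterion and the $z^\epsilon$ device would fail; writing $\nabla\bu_\ell$ through $\nabla u^2_\ell$ (controlled in $H^1$ by the horizontal viscosity) and $\omega_\ell\in\sqrt{L}$ costs only $\sqrt p$, which is exactly what makes the final limit $\epsilon\to0$ succeed. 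The remaining points --- admissibility of the energy identities and integrations by parts in the regularity class of Theorem~\ref{EU_aniso} --- are routine (Lions--Magenes plus a density argument), using once more that $\theta_\ell\in L^\infty$.
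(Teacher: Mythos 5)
Your proposal is correct and follows essentially the same route as the paper's proof: the substitution $\theta=\triangle\xi$, the absorption of the buoyancy term into the horizontal dissipation via the incompressibility relation $\be_1\cdot\partial_1\bud=-\be_2\cdot\partial_2\bud$, the $\sqrt{p}$-growth bound on $\|\nabla\bu_\ell\|_{L^p}$ obtained from $u^2_\ell\in L^2([0,T],H^2)$ together with $\omega_\ell\in L^\infty([0,T],\sqrt{L})$, and a Yudovich-type closing of the resulting differential inequality. The only (harmless) deviations are that you interpolate $\bud$ and $\nabla\xid$ through $L^{2p}$ using \eqref{CZ_est}, where the paper uses the $L^\infty$-interpolation $\|\cdot\|_{\infty}^{2/p}|\cdot|^{2-2/p}$ and therefore must first establish $\bud,\nabla\xid\in L^\infty([0,T],L^\infty)$ via Sobolev embedding from $W^{1,4}$, and that you close with the $z^{\epsilon}$-Gr\"onwall device of Theorem \ref{uniqueness_visc} rather than the paper's iteration over subintervals of length $\tau_0=\min\{T,\tfrac{1}{4CM}\}$.
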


\begin{proof}
Let $T>0$ arbitrarily large.  The existence of solution on the interval $[0,T]$ is established above, therefore it suffices to show uniqueness.  We note that some very important {\em a priori} estimates that we need in the beginning of this proof were first elegantly derived in \cite{Danchin_Paicu_2008}.   We recall those estimates that we have borrowed from \cite{Danchin_Paicu_2008}.  We have derived them
rigorously in the previous theorem and we derive them here again formally to make the proof of uniqueness self-contained.  First, one may easily show that for any $p\in[2,\infty]$, we have
\begin{align}\label{eq:theta_lp_est}
   \|\theta(t)\|_{p}\leq\|\theta_0\|_{p},
\end{align}
so $\theta\in L^\infty([0,T],L^p)$, $p\in[2,\infty]$.
Given that $\omega_0\in \sqrt{L}$, and hence $\omega_0\in L^2$, we have
\begin{align*}
   \frac{1}{2}\frac{d}{dt}|\omega|^2+\nu|\partial_1^2\omega|= -(\theta,\partial_1\omega)
   \leq\frac{\nu}{2}|\partial_1\omega|^2+\frac{1}{2\nu}|\theta|^2.
\end{align*}
Integrating this gives
\begin{align*}
   |\omega|^2+\nu\int_0^t|\partial_1\omega|^2\,d\tau\leq |\omega_0|^2+\frac{t}{\nu}|\theta_0|^2.
\end{align*}
This implies that $\omega\in L^\infty([0,T],L^2)$, and therefore $\bu\in L^\infty([0,T],V)$.  Furthermore, $\partial_1\omega\in L^2([0,T],L^2)$.  Using the divergence free condition \eqref{bouss_aniso_div}, we observe that
\begin{align*}
   \partial_1\omega
   &=
   \partial_1^2 u^2- \partial_1\partial_2 u^1
   =\partial_1^2u^2+\partial_2^2u^2 = \triangle u^2.
\end{align*}
Therefore, $\triangle u^2\in L^2([0,T],L^2)$, so that $u^2\in L^2([0,T],H^2)$ by elliptic regularity, and thus $\nabla u^2\in L^2([0,T],H^1)$.  By inequality \eqref{CZ_est}, we have
\begin{align}\label{Du2_root_L}
   \|\nabla u^2\|_{p}\leq C\sqrt{p-1}\|\nabla u^2\|_{H^1}.
\end{align}
so that $\nabla u^2\in L^2([0,T],\sqrt{L})$.

Next, we recall that we have global in time control over the $\|\omega\|_{\sqrt{L}}$.   Taking the inner product of \eqref{bouss_aniso_vort} with $|\omega|^{p-2}\omega$ for some $p>2$ and integrating by parts, and integrating in time, we have
\begin{align}
   \|\omega(t)\|_{p}^2
   \leq
   \|\omega_0\|_{p}^2+\frac{p-1}{2\nu}\|\theta_0\|_{p}^2t.
\end{align}
This shows that $\omega\in L^\infty([0,T],\sqrt{L})$. Using this, and the facts that  $\partial_1u^1=-\partial_2u^2$ (by \eqref{bouss_aniso_div}) and $\partial_2 u^1=\partial_1 u^2-\omega$, we have thanks to \eqref{Du2_root_L} that $\nabla u^1\in L^2([0,T],\sqrt{L})$.  Combining this with \eqref{Du2_root_L} shows that
\begin{align}\label{grad_u_sqrt_L}
   \nabla\bu\in L^2([0,T],\sqrt{L}).
\end{align}
We recall again that all the estimates above were first derived in \cite{Danchin_Paicu_2008} for the case where $\Omega = \nR^2$.



We are now ready to show that if $(\bu_1,\theta_1)$ and $(\bu_2,\theta_2)$ are two solutions to \eqref{bouss_aniso_weak_form} on the interval $[0,T]$,  with the same initial data $(\bu_0,\theta_0)$ then they must be the equal.  Define $\bud := \bu_1-\bu_2$, $\diff{\theta}:=\theta_1-\theta_2$, and $\xi_\ell:=\triangle^{-1}\theta_\ell$, $\ell=1,2$, and $\xid:=\xi_1-\xi_2$.   Based on Remark \ref{e:functional_nu_horizontal}, these quantities satisfy the following functional equations.
\begin{subequations}\label{e:functional_nu_horizontal_diff}
  \begin{align}\label{e:funct_1_horizontal_diff}
  \pd{\bud}{t} + \nu \partial_{11}\bud + B(\bud,\bu_1)  + B(\bu_2,\bud) &= P_\sigma(\triangle\xid\be_2)\quad \mbox{in}\quad L^2([0,T], V')\quad
  \mbox{and}\\\label{e:funct_2_horizontal_diff}
   \pd{\triangle\xid}{t} + \mathcal{B}(\bud,\triangle\xi_1) + \mathcal{B}(\bu_2,\triangle\xid) &= 0\quad \mbox{in}\quad L^2([0,T], H^{-1}).
   \end{align}
   \end{subequations}
 Taking the action of \eqref{e:funct_1_horizontal_diff} on $\bud$ in $L^2([0,T], V)$ and the action of  \eqref{e:funct_2_horizontal_diff} in $L^2([0,T],H^{-1})$ with $\xid\in L^2([0,T], H^2)$, thanks to the properties of the operator $B$ in Lemma \ref{B:prop} and  the operator $\mathcal{B}$ in Lemma \ref{B_theta:prop}  we obtain the following:
\begin{align*}
   \frac{1}{2}\frac{d}{dt}|\bud(t)|^2+\nu\|\partial_1\bud\|^2 &= \sum_{j=1}^2(\diff{u}^j\bu_1,\partial_j\bud)    + (\triangle\xid\be_2,\bud)\\
   \frac{1}{2}\frac{d}{dt}\|\xid(t)\|^2&=-(\bud\triangle\xi_1,\nabla\xid)-(\bu_2\triangle\xid,\nabla\xid),
\end{align*}
where again we have used Lions-Magenes Lemma (see, e.g., \cite{Temam_2001_Th_Num}) to get that $ \ip{\pd{\bud}{t}}{\bud}=\frac{1}{2}\frac{d}{dt}|\bud(t)|^2$ and $ \ip{\pd{\triangle\xid}{t}}{\xid}=\frac{1}{2}\frac{d}{dt}\|\xid(t)\|^2$.
By Lemma \ref{B:prop}, we obtain
\begin{align*}
   \frac{1}{2}\frac{d}{dt}|\bud|^2+\nu|\partial_1\bud|^2
   &\leq  \int_{\nT^2}\abs{\nabla\bu_1}\abs{\bud}^2\,d\bx
  +\abs{(\triangle\xid\be_2,\bud)}
   \intertext{and}
   \frac{1}{2}\frac{d}{dt}\|\xid\|^2
   &\leq
   \abs{\int_{\nT^2}\bud\cdot\nabla\xid\triangle\xi_1\,d\bx}+\abs{\int_{\nT^2}\bu_2\cdot\nabla\xid\triangle\xid\,d\bx}.
\end{align*}
Next, observe that, due to the divergence free condition, $\be_1\cdot\partial_1\bud=-\be_2\cdot\partial_2\bud$, we have
\begin{align*}
   |(\triangle\xid\be_2,\bud)|
   &\leq
   \int_{\nT^2}\pnt{|\partial_1\xid\be_2\cdot\partial_1\bud|
   +|\partial_2\xid\be_2\cdot\partial_2\bud|}\,d\bx
    \\&=
    \int_{\nT^2}\pnt{|\partial_1\xid\be_2\cdot\partial_1\bud|
   +|\partial_2\xid\be_1\cdot\partial_1\bud|}\,d\bx
    \\&\leq
   \frac{1}{\nu}|\partial_1\xid|^2+\frac{\nu}{4}|\be_2\cdot\partial_1\bud|^2
   +\frac{1}{\nu}|\partial_2\xid|^2+\frac{\nu}{4}|\be_1\cdot\partial_1\bud|^2.
\end{align*}
%
Combining the above estimates, we find
\begin{align*}
   \frac{1}{2}\frac{d}{dt}|\bud|^2+\nu|\partial_1\bud|^2
   &\leq
  \int_{\nT^2}\abs{\nabla\bu_1}\abs{\bud}^2\,d\bx
  +\frac{2}{\nu}\|\xid\|^2 +\frac{\nu}{2}|\partial_1\bud|^2
  \\&\leq
  \|\bud\|_{\infty}^{2/p}\int_{\nT^2}\abs{\nabla\bu_1}\abs{\bud}^{2-2/p}\,d\bx
  +\frac{2}{\nu}\|\xid\|^2 +\frac{\nu}{2}|\partial_1\bud|^2
   \\&\leq
   \|\nabla\bu_1\|_{p}\|\bud\|_{\infty}^{2/p}|\bud|^{2-2/p}+\frac{2}{\nu}\|\xid\|^2 +\frac{\nu}{2}|\partial_1\bud|^2
\intertext{where we have used H\"older's inequality.  Similarly, by Lemma \ref{B_theta:prop}}
   \frac{1}{2}\frac{d}{dt}\|\xid\|^2
   &\leq
   \abs{\int_{\nT^2}\bud\cdot\nabla\xid\triangle\xi_1\,d\bx}+\int_{\nT^2}|\nabla\bu_2||\nabla\xid|^2\,d\bx
   \\&\leq
   |\bud||\nabla\xid|\|\triangle\xi_1\|_{\infty}
   +\|\nabla\bu_2\|_{p}\|\nabla\xid\|_{\infty}^{2/p}|\nabla\xid|^{2-2/p}.
\end{align*}
From the estimates above we can now adapt the well-known Yudovich argument for the 2D incompressible Euler equations (see, e.g., \cite{Yudovich_1963}) to complete the uniqueness proof.  Let $X^2:=|\bud(t)|^2+\|\xid(t)\|^2+\eta^2$ for some arbitrary $\eta>0$.  Adding the above two inequalities and using Young's inequality  gives,
\begin{align*}
   &\qquad
   \frac{1}{2}\frac{d}{dt}X^2+\frac{\nu}{2}|\partial_1\bud|^2
   \\&\leq
   K_{\nu}\pnt{|\bud|^2+\|\xid\|^2+\eta^2}
   \\&\quad
   +\pnt{\|\nabla\bu_2\|_{p}+\|\nabla\bu_1\|_{p}}%
   \pnt{\|\bud\|_{\infty}^{2/p}+\|\nabla\xid\|_{\infty}^{2/p}}
   \pnt{|\bud|^{2-2/p}+|\nabla\xid|^{2-2/p}}
   \\&\leq
   K_{\nu}X^2
   +C\pnt{\|\nabla\bu_2\|_{p}+\|\nabla\bu_1\|_{p}}%
   \pnt{\|\bud\|_{\infty}^{2/p}+\|\nabla\xid\|_{\infty}^{2/p}}
   X^{2-2/p}.
\end{align*}

Neglecting the term $\frac{\nu}{2}|\partial_1\bud|^2$, dividing by $X$, and making the change of variables $Y(t)=e^{-K_{\nu}t}X(t)$, we have after a simple calculation,
\begin{align*}
   \dot{Y} \leq
   Ce^{-2K_{\nu}t/p}\pnt{\|\nabla\bu_2\|_{p}+\|\nabla\bu_1\|_{p}}%
   \pnt{\|\bud\|_{\infty}^{2/p}+\|\nabla\xid\|_{\infty}^{2/p}}
   Y^{1-2/p}.
\end{align*}
Integrating this equation and using the fact that $e^{-2K_{\nu}t/p} \leq 1$, we get that
\begin{align*}
Y(t)\leq
  \left[ \eta^{2/p} +C \int_0^t \frac{1}{p}\pnt{\|\nabla\bu_2(s)\|_{p}+\|\nabla\bu_1(s)\|_{p}}%
   \pnt{\|\bud(s)\|_{\infty}^{2/p}+\|\nabla\xid(s)\|_{\infty}^{2/p}}\;ds \right]^{p/2}.
   \end{align*}
Letting $\eta\maps 0$ we discover that for all $t\in[0,T]$,
\begin{align}\label{p_eqn}
 \notag
 |\bud(t)|^2+\|\xid(t)\|^2
 &\leq  \pnt{\|\bud\|_{L_T^\infty L^\infty_x}+\|\nabla\xid\|_{L_T^\infty L^\infty_x}}
 \\&\qquad\cdot
\pnt{C\int_0^t\frac{1}{p}\pnt{\|\nabla\bu_2(s)\|_{p}+\|\nabla\bu_1(s)\|_{p}}\,ds}^{p/2}.
 \end{align}

Thanks to the fact that $\triangle\xid=\diff{\theta}\in L^\infty([0,T],L^\infty)\subset L^\infty([0,T],L^4)$, we have by elliptic regularity that $\xid\in L^\infty([0,T],W^{2,4})$, and therefore $\nabla\xid\in L^\infty([0,T],W^{1,4})$.   Thus, by the Sobolev Embedding Theorem, we have $\nabla\xid\in L^\infty([0,T],W^{1,4}) \subset L^\infty([0,T],C^{0,\gamma})$, for some $\gamma\in(0,1)$.  Furthermore, $\diff{\omega} \in L^\infty([0,T],\sqrt{L})$ implies, for instance that $\bud\in L^\infty([0,T],W^{1,4})$ by the Calder\'on-Zygmund elliptic estimate \eqref{Calderon}. Using the Sobolev Embedding Theorem again, we have $\bud\in L^\infty([0,T],C^{0,\gamma})$, for some $\gamma\in(0,1)$.  Therefore, the first factor on the right-hand side of \eqref{p_eqn} is bounded.
 Now, since $\nabla\bu_\ell\in L^2([0,T],\sqrt{L})$, $\ell=1,2$ by \eqref{grad_u_sqrt_L},  we have by Cauchy-Schwarz
\begin{align*}
\int_0^t\frac{\|\nabla\bu_\ell(s)\|_p}{p}\;ds
&\leq
\left(t\int_0^T\sup_{p\geq 2}\frac{\|\nabla\bu_\ell(s)\|^2_{p}}{p-1}\;ds \right)^{1/2}.
\end{align*}
Let $\displaystyle M_\ell = \int_0^T\sup_{p\geq 2}\frac{\|\nabla\bu_\ell(s)\|^2_{p}}{p-1}\;ds$, $\ell = 1, 2$ and $M = \max\{M_1, M_2\}$.  Thus, from the above, for every fixed $\tau\in (0,T]$ we have
\begin{align}\label{p_1}
|\bud(t)|^2+\|\xid(t)\|^2\leq K (2CM\tau)^{p/2}, \text{ for all } t\in [0,\tau],
\end{align}
 where the constant $C$ is the same constant which appears in \eqref{p_eqn} and $K= \pnt{\|\bud\|_{L_T^\infty L^\infty_x}+\|\nabla\xid\|_{L_T^\infty L^\infty_x}}$.  Now choose $\tau = \tau_0 = \min\{T, \frac{1}{4CM}\}$, and consider \eqref{p_1} on $[0,\tau_0]$.  Taking the limit as $p\maps \infty$, we get that
 $|\bud(t)|^2+\|\xid(t)\|^2\leq 0$ for all $t\in [0,\tau_0]$.  Restarting the time at $t = \tau_0$ and noting the fact that
\begin{align*}
\int_{\tau_0}^{t+\tau_0}\frac{\|\nabla\bu_\ell(s)\|_p}{p} \,ds
&\leq
\left(t\int_0^T\sup_{p\geq 2}\frac{\|\nabla\bu_\ell(s)\|^2_{p}}{p-1}\,ds \right)^{1/2},
\end{align*}
we obtain from the analogue of \eqref{p_eqn} on $[\tau_0,T]$ that $|\bud(t)|^2+\|\xid(t)\|^2\leq K(2CM\tau_0)^{p/2}$ for all $t\in [\tau_0,2\tau_0]$.  Since we defined $\tau_0 \leq \frac{1}{4CM}$, we take the limit $p\rightarrow \infty$ and find that on the interval $[\tau_0, 2\tau_0]$, we also have that $|\bud(t)|^2+\|\xid(t)\|^2\leq 0$.   We can continue this argument on the intervals $[2\tau_0, 3\tau_0], [3\tau_0, 4\tau_0],\dots,$ and so on.  Thus, we have  $|\bud(t)|^2+\|\xid(t)\|^2\leq 0$ for all $t\in [0,T]$.  This implies that, $|\bud(t)| = 0$ and $\|\xid(t)\| =0$ for all $t\in [0,T]$.
\end{proof}

\section{Global Well-posedness Results for the Voigt-regularized Inviscid and Non-diffusive Boussinesq Equations \texorpdfstring{($P_{0,0}^\alpha$)}{}} \label{s:P_alpha_zero_zero}

In this section, we investigate the problem $P_{0,0}^\alpha$, $\alpha>0$, given by \eqref{bouss_v} (with $\nu=\kappa=0$) in 2D.  We first establish global well-posedness results, and then investigate the behavior of solutions as $\alpha\maps 0$.  In particular, we compare the limiting behavior to sufficiently regular solutions of the $P_{0,0}^0$ problem.  This leads to a new criterion for the blow-up of solutions to the $P_{0,0}^0$ problem.  A similar criterion was given for the blow-up of the Surface Quasi-Geostrophic equations in \cite{Khouider_Titi_2008}, for the Euler equations in \cite{Larios_Titi_2009}, and for the inviscid, resistive MHD equations in \cite{Larios_Titi_2010_MHD}.

\begin{definition}\label{def:voigt_sol}
   Let $T>0$.  Suppose $\bu_0\in V$ and $\theta_0\in L^2$.  We say that $(\bu,\theta)$ is a \textit{weak solution} to the problem $P_{0,0}^\alpha$ on the interval $[0,T]$ if for all test functions $\Phi$, $\vphi$ chosen as in \eqref{test_fcns}, $(\bu,\theta)$ satisfies
\begin{subequations}\label{bouss_v_weak}
\begin{align}
   &\quad\notag
   -\int_0^T(\bu(s),\Phi'(s))\,ds-\alpha^2\int_0^T((\bu(s),\Phi'(s)))\,ds
   +\sum_{j=1}^2\int_0^T(u^j\bu,\partial_j\Phi)\,ds
   \\&\label{bouss_v_weak_mo}
    = (\bu_0,\Phi(0))+\alpha^2((\bu_0,\Phi(0)))+\int_0^T(\theta(s)\be_2,\Phi(s))\,ds,
   \\&\label{bouss_v_weak_den}
   -\int_0^T(\theta(s),\vphi'(s))\,ds + \int_0^T(\theta\bu,\nabla\vphi)\,ds = (\theta_0,\vphi(0)).
\end{align}
\end{subequations}
 and furthermore, $\bu\in C([0,T],V)$, $\pd{\bu}{t}\in L^\infty([0,T],V)$, $\theta\in L^\infty([0,T],L^2)$, $\theta\in C_w([0,T],L^2)$ and $\pd{\theta}{t}\in L^\infty([0,T],H^{-2}) $.
\end{definition}
\begin{remark}
Following similar arguments as those for the NSE presented in \cite{Temam_2001_Th_Num} one can show that this definition is equivalent to the functional equation
\begin{subequations}\label{e:functional_voigt}
  \begin{align}\label{e:funct_1_voigt}
  (I+\alpha^2A)\pd{\bu}{t} +  B(\bu,\bu) &= P_\sigma(\theta\be_2)\quad \mbox{in}\quad L^2([0,T], V')\quad
  \mbox{and}\\\label{e:funct_2_voigt}
   \pd{\theta}{t} + \mathcal{B}(\bu,\theta) &= 0\quad \mbox{in}\quad L^2([0,T], H^{-2}).
   \end{align}
   \end{subequations}

\end{remark}
\begin{theorem}\label{bouss_v_exist}
   Let $\bu_0\in V$, $\theta_0\in L^2$.  Then there exists a solution to $P^\alpha_{0,0}$, in the sense of Definition \ref{def:voigt_sol}.  Furthermore, if $\theta_0\in L^\infty$, then $\theta\in L^\infty([0,T], L^\infty)$.
  \end{theorem}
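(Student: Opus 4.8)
The plan is a standard Galerkin approximation, which is easier here than for $P^0_{\nu,0}$ because the Voigt term $-\alpha^2\triangle\pd{\bu}{t}$ already makes the momentum equation well-behaved without any parabolic smoothing. Recall $H_n=\mathrm{span}\{\bw_1,\dots,\bw_n\}$ and let $P_n$ denote both the $L^2$-orthogonal projection onto $H_n$ and, abusing notation, the $L^2$-projection onto the span of the first $n$ zero-average eigenfunctions of $-\triangle$. One seeks $\bun(t)\in H_n$ and $\thetan(t)$ in the latter span with
\[
(I+\alpha^2A)\pd{\bun}{t}=P_n\!\left[-B(\bun,\bun)+P_\sigma(\thetan\be_2)\right],\qquad
\pd{\thetan}{t}=-P_n\mathcal{B}(\bun,\thetan),
\]
and $\bun(0)=P_n\bu_0$, $\thetan(0)=P_n\theta_0$. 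Since $(I+\alpha^2A)|_{H_n}$ is symmetric positive-definite, hence invertible, the right-hand sides are quadratic (so locally Lipschitz) in the coefficients, and Picard--Lindel\"of gives a unique local solution; the $n$-uniform bounds below extend it to $[0,T]$.

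For the a priori estimates, testing the temperature equation with $\thetan$ and using \eqref{B_theta:zero} gives $\frac{d}{dt}|\thetan|^2=0$, so $|\thetan(t)|=|P_n\theta_0|\le|\theta_0|$. Testing the momentum equation with $\bun$, using $(B(\bun,\bun),\bun)=0$ and Cauchy--Schwarz on $(\thetan\be_2,\bun)$, yields $\frac{d}{dt}\big(|\bun|^2+\alpha^2\|\bun\|^2\big)\le 2|\theta_0|\big(|\bun|^2+\alpha^2\|\bun\|^2\big)^{1/2}$, hence $|\bun(t)|^2+\alpha^2\|\bun(t)\|^2\le\big(\sqrt{|\bu_0|^2+\alpha^2\|\bu_0\|^2}+|\theta_0|T\big)^2$; thus $(\bun)$ is bounded in $L^\infty([0,T],V)$ uniformly in $n$ (with a bound depending on $\alpha$). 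For the time derivatives, $B(\bun,\bun)$ is bounded in $V'$ by $C\|\bun\|^2$ (by \eqref{B:ineq1} and \eqref{poincare}) and $P_\sigma(\thetan\be_2)$ is bounded in $H$ by $|\theta_0|$; since $I+\alpha^2A\colon V\to V'$ is an isomorphism (Lax--Milgram) commuting with $P_n$, $\pd{\bun}{t}$ is bounded in $L^\infty([0,T],V)$ uniformly in $n$. Likewise, from \eqref{B_theta:def} and the Ladyzhenskaya inequality \eqref{L4_to_H1}, $\|\mathcal{B}(\bun,\thetan)\|_{H^{-2}}\le C|\thetan|\,|\bun|^{1/2}\|\bun\|^{1/2}$, so $\pd{\thetan}{t}$ is bounded in $L^\infty([0,T],H^{-2})$ uniformly in $n$.

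By Banach--Alaoglu one extracts a subsequence with $\bun\rightharpoonup\bu$ and $\pd{\bun}{t}\rightharpoonup\pd{\bu}{t}$ weak-$*$ in $L^\infty([0,T],V)$, and $\thetan\rightharpoonup\theta$, $\pd{\thetan}{t}\rightharpoonup\pd{\theta}{t}$ weak-$*$ in $L^\infty([0,T],L^2)$ and $L^\infty([0,T],H^{-2})$. Since $(\bun)$ is bounded in $L^\infty([0,T],V)$ and $(\pd{\bun}{t})$ in $L^\infty([0,T],H)$, Arzel\`a--Ascoli with the compact embedding $V\hookrightarrow\hookrightarrow H$ gives $\bun\maps\bu$ strongly in $C([0,T],H)$; writing $\bu(t)=\bu(0)+\int_0^t\pd{\bu}{s}\,ds$ shows $\bu\in C([0,T],V)$ with $\pd{\bu}{t}\in L^\infty([0,T],V)$. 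Exactly as in Step~3 of the proof of Theorem~\ref{exist_weak_visc} (relative compactness in $C_w([0,T],L^2)$ from the uniform $L^2$ bound and equicontinuity of $t\mapsto(\thetan(t),\phi)$ for trigonometric-polynomial $\phi$, then density), $\thetan\maps\theta$ in $C_w([0,T],L^2)$, so $\theta\in C_w([0,T],L^2)$. One then passes to the limit in the Galerkin identities: the linear terms converge by the weak-$*$ convergences; the velocity nonlinearity $\sum_j\int(u^{j,(n)}\bun,\partial_j\Phi)$ by the strong $L^2([0,T],H)$ convergence of $\bun$, as in Navier--Stokes theory; the transport term $\int(\thetan\bun,\nabla\vphi)$ by splitting $\thetan\bun-\theta\bu=(\bun-\bu)\thetan+\bu(\thetan-\theta)$ and combining strong $L^2([0,T],H)$ convergence of $\bun$ with weak $L^2([0,T],L^2)$ convergence of $\thetan$ (the $J_1,J_2$ argument of Theorem~\ref{exist_weak_visc}); and the initial-data terms since $P_n\bu_0\maps\bu_0$ in $V$ and $P_n\theta_0\maps\theta_0$ in $L^2$. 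Hence $(\bu,\theta)$ satisfies \eqref{bouss_v_weak} with the regularity required by Definition~\ref{def:voigt_sol}.

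Finally, when $\theta_0\in L^\infty$ the Galerkin truncation $P_n\theta$ does not preserve the $L^\infty$ bound, so the Stampacchia truncation cannot be run at the discrete level; instead, the limit $\theta$ is transported by the divergence-free field $\bu\in L^\infty([0,T],V)=L^\infty([0,T],H^1)$, so the DiPerna--Lions renormalization theory \cite{DiPerna_Lions_1989} applies and every $L^p$ norm ($p\in[1,\infty]$) of $\theta$ is conserved, giving $\theta\in L^\infty([0,T],L^\infty)$ with $\|\theta(t)\|_{L^\infty}=\|\theta_0\|_{L^\infty}$. (Alternatively, run the whole construction with an added vanishing diffusion $\kappa\triangle\theta$, get $\|\theta_\kappa\|_{L^\infty([0,T],L^\infty)}\le\|\theta_0\|_{L^\infty}$ uniformly in $\kappa$ by the Hopf--Stampacchia argument of Step~4 of Theorem~\ref{exist_weak_visc}, and let $\kappa\maps0$.) This last point is the only one requiring care: without diffusion and with a projection that does not preserve $L^\infty$, the $L^\infty$ propagation must come from renormalization rather than a naive maximum principle; everything else is routine, the Voigt term in fact making the velocity estimates and the continuity $\bu\in C([0,T],V)$ immediate.
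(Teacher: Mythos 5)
Your argument follows the paper's proof essentially step for step: the same Galerkin truncation of \eqref{e:functional_voigt}, the same energy identities giving $|\theta_n(t)|\le|\theta_0|$ and the $\alpha$-weighted bound on $|\bu_n|^2+\alpha^2\|\bu_n\|^2$ (your closed form $\bigl(\sqrt{|\bu_0|^2+\alpha^2\|\bu_0\|^2}+|\theta_0|T\bigr)^2$ is in fact the correct integration of the differential inequality, whereas \eqref{v_u_en} as printed drops the cross term), the same $V'$-bound on $(I+\alpha^2A)\frac{d\bu_n}{dt}$ inverted to control $\frac{d\bu_n}{dt}$ in $L^\infty([0,T],V)$, and the same compactness and limit passage, including the $J_1,J_2$ splitting of the transport nonlinearity. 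The one point where you genuinely depart from the paper is the final $L^\infty$ propagation. The paper disposes of it by asserting that Step 4 of the proof of Theorem \ref{exist_weak_visc} applies ``line-by-line,'' but that step is a Hopf--Stampacchia truncation carried out on the $\kappa$-diffusive solutions $\theta_\kappa$, and you rightly note that the Galerkin projection does not preserve $L^\infty$ and that running the truncation directly on the limiting transport equation requires more regularity of $\theta$ than is available. Your two remedies are both sound: the vanishing-diffusion detour is precisely the rigorous reading of the paper's instruction, while the DiPerna--Lions renormalization route (legitimate here since $\bu\in L^\infty([0,T],H^1)$ is divergence-free, and the paper already invokes \cite{DiPerna_Lions_1989} in Step 3 of Theorem \ref{exist_weak_visc}) buys the stronger conclusion that every $L^p$ norm of $\theta$ is conserved, not merely bounded. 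This is a refinement of, rather than a gap in, the paper's argument.
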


\begin{proof}  We use the notation laid out in Section \ref{sec:Pre}.
   Let us consider the Galerkin approximation to $P^\alpha_{0,0}$ (or equivalently, \eqref{e:functional_voigt}) given by
 \begin{subequations}\label{bouss_v_Gal}
\begin{alignat}{2}
\label{bouss_v_Gal_mo}
(I+\alpha^2A)\partial_t\bu_n + P_nB(\bu_n,\bu_n) &= P_nP_\sigma(\theta_n \be_2),\\
\label{bouss_v_Gal_den}
\partial_t\theta_n + P_n(\nabla\cdot(\bu_n \theta_n)) &=0,\\
\label{bouss_v_Gal_in}
\bu_n(0)=P_n\bu_0,\quad \theta_n(0)&=P_n\theta_0.
\end{alignat}
\end{subequations}
   This is a finite dimensional system of ODEs in $H_n$ with quadratic polynomial non-linearity, and therefore it has a unique local solution in $C^1([0,T_n),H_n)$ for some $T_n>0$.  Let $[0,T_n^*)$ be the maximal interval of existence and uniqueness of solutions to \eqref{bouss_v_Gal}.  We show below that $T_n^*=\infty$ for every $n$.

Taking the inner product of \eqref{bouss_v_den} with $\theta_n$, using Lemma \ref{B_theta:prop}, and integrating in time, we find that for $t\in[0,T_n^*)$,
\begin{align}\label{v_theta_L2}
  |\theta_n(t)| =|\theta_n(0)|\leq |\theta_0|.
\end{align}
\noindent Next, we take the inner product of \eqref{bouss_v_mo} with $\bu_n$ and use Lemma \ref{B:prop} to find
\begin{align}
   \frac{1}{2}\frac{d}{dt}(|\bu_n|^2+\alpha^2\|\bu_n\|^2)
   &=\notag
   (\theta_n\be_2,\bu_n)
   \leq
   |\theta_n||\bu_n|
     \\&\leq
   \label{bouss_L2_est}
  |\theta_0|\sqrt{|\bu_n|^2+\alpha^2\|\bu_n\|^2}
\end{align}
Consequently, we have for $t\in[0,T_n^*)$,
\begin{align}\label{v_u_en}
   |\bu_n(t)|^2+\alpha^2\|\bu_n(t)\|^2
   \leq
   |\bu_0|^2+\alpha^2\|\bu_0\|^2+t^2|\theta_0|^2.
\end{align}
According to \eqref{v_theta_L2} and \eqref{v_u_en},  we see that if $T_n^*<\infty$, then $\|\bu_n\|$ and $|\theta_n|$ are both bounded in time on $[0,T_n^*)$, and thus the solutions can be continued beyond $T_n^*$, contradicting the definition of $T_n^*$ as the maximal time of existence.  Thus, $T_n^*=\infty$ for all $n\in\nN$.  Next, we find bounds on the time derivatives.  From now on, we work on the interval $[0,T]$, where $T$ was arbitrarily given in the statement of the theorem.
Using Lemma \ref{B:prop}, along with \eqref{poincare} and \eqref{v_theta_L2}, we have
\begin{align}
   \norm{(I+\alpha^2A)\frac{d\bu_n}{dt}}_{V'}
   &\leq\label{v_Au_t+u_t}
   \sup_{\|\bw\|=1}\abs{\pair{B(\bu_n,\bu_n)}{P_n\bw}}+
   \sup_{\|\bw\|=1}\abs{\pair{\theta_n\be_2}{P_n\bw}}
   \\&\leq\notag
   \sup_{\|\bw\|=1}|\bu_n|\|\bu_n\|\|\bw\|+
   \sup_{\|\bw\|=1}|\theta_n||\bw|
   \\&\leq\notag
   |\bun|\|\bun\| + \lambda_1^{-1/2}|\thetan|
   \\&\leq\notag
   |\bu_n|\|\bu_n\|+\lambda^{-1/2}|\theta_0|.
\end{align}
Thanks to \eqref{v_u_en} and \eqref{v_Au_t+u_t} we obtain that $(I+\alpha^2A)\frac{d\bu_n}{dt}$ is uniformly bounded in $L^\infty([0,T],V')$, which implies that $\frac{d\bu_n}{dt}$ is uniformly bounded in $L^\infty([0,T],V)$, with respect to $n$.
Similarly, using Lemma \ref{B_theta:prop}, we obtain
\begin{equation}
\norm{\pd{\theta_n}{t}}_{H^{-2}} \leq |\theta_n||\bu_n|^{1/2}\|\bu_n\|^{1/2},
\end{equation}
which implies that $\pd{\theta_n}{t}$ is bounded in  $L^\infty([0,T],H^{-2})$ independently of $n$ by virtue of \eqref{v_theta_L2} and \eqref{v_u_en}.

The above bounds allow us to use the Banach-Alaoglu Theorem and the Aubin Compactness Theorem  (see, e.g., \cite{Temam_2001_Th_Num, Constantin_Foias_1988}) to extract a subsequence, which we still write as $(\bu_n,\theta_n)$, and elements $\bu$ and $\theta$, such that
   \begin{subequations}\label{wk_conv_Gal}
\begin{align}
\label{st_u_L2H_Gal}
\bu_n\maps\bu &\quad\text{strongly in }L^2([0,T],H),
\\\label{wk_u_L2V_Gal}
\bu_n\rightharpoonup\bu &\quad\text{weakly in }L^2([0,T],V)\text{ and weak-$*$ in }L^\infty([0,T],V),\\
\label{wk_du_L2V_Gal}
\pd{\bu_n}{t}\rightharpoonup\pd{\bu}{t} &\quad\text{ weak-$*$ in }L^\infty([0,T],V),
\\\label{wk_theta_LiH_Gal}
\theta_n\rightharpoonup\theta &\quad\text{weakly in }L^2([0,T],L^2)\text{ and weak-$*$ in }L^\infty([0,T],L^2),
\\\label{wk_dtheta_LiH_Gal}
\pd{\theta_n}{t}\rightharpoonup\pd{\theta}{t} &\quad\text{ weak-$*$ in }L^\infty([0,T],H^{-2}).
\end{align}
\end{subequations}

   Next, for arbitrary $\vphi$ and $\Phi$, chosen as in \eqref{test_fcns},  let us take the  inner product of $\eqref{bouss_v_Gal_mo}$ with $\Phi$, and of \eqref{bouss_v_Gal_den} with $\vphi$ and integrate in time on $[0,T]$.  After integrating by parts several times, we have
\begin{subequations}\label{bouss_v_Gal_weak}
\begin{align}
   &\quad
   -\int_0^T(\bu_n(s),\Phi'(s))\,ds-\alpha^2\int_0^T((\bu_n(s),\Phi'(s)))\,ds
   +\sum_{j=1}^2\int_0^T(u_n^j\bu_n,P_n\partial_j\Phi)\,ds
   \\&\notag
    = (\bu_n(0),\Phi(0))+\alpha^2((\bu_n(0),\Phi(0)))+\int_0^T(\theta_n(s)\be_2,\Phi(s))\,ds,
   \\&
   -\int_0^T(\theta_n(s),\vphi'(s))\,ds + \int_0^T(\theta_n\bu_n,P_n\nabla\vphi)\,ds = (\theta_0,\vphi(0)),
\end{align}
\end{subequations}
where we have again denoted $'\equiv \pd{}{s}$.  We would like to pass to the limit as $n\maps\infty$ to obtain \eqref{bouss_v_weak}.  The convergence of the linear terms is straight-forward, thanks to \eqref{wk_conv_Gal}.  As for the non-linear terms, notice that the convergence in \eqref{wk_conv_Gal} is  in stronger than the convergence in \eqref{wk_conv}, and so the convergence of the non-linear terms follows just as in the proof of Theorem \ref{exist_weak_visc} (note that $P_n\Phi= \Phi$ and $P_n\vphi=\vphi$ for sufficiently large $n$, due to our choice of test functions in \eqref{test_fcns}). Thus, $(\bu,\theta)$ satisfies \eqref{bouss_v_weak}.  In particular, choosing $\phi$ and $\Phi$ to have compact support in $(0,T)$, we see that the equations of \eqref{e:funct_1_voigt} and \eqref{e:funct_2_voigt} are satisfied in the sense of distributions in time with values in $V'$ and $H^{-2}$, respectively.  Acting with  \eqref{e:funct_1_voigt} on $\Phi$, with \eqref{e:funct_2_voigt} on $\vphi$, and integrating in time on $[t_0,t_1]$, we find
\begin{subequations}\label{bouss_v_weak_integrated}
\begin{align}
   &\quad\label{bouss_v_weak_integrated_mo}
   -\int_{t_0}^{t_1}(\bu(s),\Phi'(s))\,ds-\alpha^2\int_{t_0}^{t_1}((\bu(s),\Phi'(s)))\,ds
   +\sum_{j=1}^2\int_{t_0}^{t_1}(u^j\bu,\partial_j\Phi)\,ds
   \\&\notag
    = (\bu(t_0),\Phi(t_0))+\alpha^2((\bu(t_0),\Phi(t_0)))
    - (\bu(t_1),\Phi(t_1))-\alpha^2((\bu(t_1),\Phi(t_1)))
    \\&\quad\notag
    +\int_{t_0}^{t_1}(\theta(s)\be_2,\Phi(s))\,ds,
   \\&\label{bouss_v_weak_integrated_den}
   -\int_{t_0}^{t_1}(\theta(s),\vphi'(s))\,ds + \int_{t_0}^{t_1}(\theta\bu,\nabla\vphi)\,ds = (\theta(t_0),\vphi(t_0))- (\theta(t_1),\vphi(t_1)).
\end{align}
\end{subequations}
Temporarily restricting our set of test functions to those which are compactly supported in time on $[0,T]$ and considering the case $t_0=0$ and $t_1=T$, it is easy to see from a simple density argument that $(\bu,\theta)$ satisfies the equations of \eqref{e:functional_voigt} in the sense of distributions, thanks to \eqref{bouss_v_weak_integrated}.  Next, allowing $\Phi(0)$, and $\vphi(0)$ to be arbitrary, but fixing $\Phi(T)=0$ and $\vphi(T)=0$, we act on $\Phi$ with \eqref{e:funct_1_voigt} and on $\vphi$ with \eqref{e:funct_2_voigt} and integrate on $[0,T]$, resulting equations in \eqref{bouss_v_weak}.  Comparing \eqref{bouss_v_weak} with \eqref{bouss_v_weak_integrated}, we find that that $\theta(0)=\theta_0$ in the sense of $H^{-1}$ and that $(I+\alpha^2A)\bu(0)=(I+\alpha^2A)\bu_0$ in the sense of $V'$.  Inverting $I+\alpha^2A$ gives $\bu(0)=\bu_0$.  Furthermore, we may send $t_1\maps t_0$ in \eqref{bouss_v_weak_integrated_den}, and use the density of $C^{\infty}(\nT^2)$ in $L^2(\nT^2)$, as well as the boundedness of $\theta$ in $L^\infty([0,T],L^2(\nT^2))$, to show that $\theta\in C_w([0,T],L^2(\nT^2))$.  Next, since we have $\bu,\frac{d}{dt}\bu\in L^\infty([0,T],V)\hookrightarrow L^2([0,T],V)$, it follows that $\bu\in C([0,T],V)$ by the Sobolev Embedding Theorem.  Thus, we have shown a weak solution exists in the sense of Definition \ref{def:voigt_sol}.  Finally, one may show that if $\theta_0\in L^\infty(\nT^2)$, then $\theta\in L^\infty([0,T],L^\infty)$, by following Step 4 of the proof of Theorem \ref{exist_weak_visc} line-by-line.
\end{proof}

\begin{theorem}[Uniqueness for the 2D Voigt model]\label{voigt_unique}  Let $T>0$ be arbitrary.  Suppose $\bu_0\in \cD(A)$ and $\theta_0\in L^2(\nT^2)$.  Then there exists a unique solution to \eqref{bouss_v} in the sense of Definition \ref{def:voigt_sol}.  Furthermore, it holds that $\bu\in L^\infty([0,T],\cD(A))$.
\end{theorem}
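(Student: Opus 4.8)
The plan is to reduce the theorem to uniqueness (existence in the sense of Definition~\ref{def:voigt_sol} is Theorem~\ref{bouss_v_exist}) and to carry it out in two stages: first upgrade the solution to $\bu\in L^\infty([0,T],\cD(A))$, then compare two solutions with a logarithmic (Osgood) Gr\"onwall estimate, writing $\theta=\triangle\xi$ as in Theorem~\ref{uniqueness_visc}. For the regularity upgrade I would re-run the Galerkin scheme from the proof of Theorem~\ref{bouss_v_exist}, adding the estimate obtained by pairing \eqref{bouss_v_Gal_mo} with $A\bu_n$: using the two-dimensional periodic identity $(P_nB(\bu_n,\bu_n),A\bu_n)=0$ (as in the proof of Theorem~\ref{exist_strong_visc}) and $(\theta_n\be_2,A\bu_n)\le|\theta_0|\,|A\bu_n|\le\tfrac{|\theta_0|}{\alpha}\big(\|\bu_n\|^2+\alpha^2|A\bu_n|^2\big)^{1/2}$, one gets $\tfrac{d}{dt}\big(\|\bu_n\|^2+\alpha^2|A\bu_n|^2\big)^{1/2}\le|\theta_0|/\alpha$, hence a bound on $|A\bu_n|$ uniform in $n$ and in $t\in[0,T]$ that passes to the limit. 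To see that \emph{any} solution in the sense of Definition~\ref{def:voigt_sol} inherits this (which the uniqueness proof needs), I would bootstrap on \eqref{e:funct_1_voigt}: since $(I+\alpha^2A)\bu(t)=(I+\alpha^2A)\bu_0+\int_0^t(-B(\bu,\bu)+P_\sigma(\theta\be_2))\,ds$ with $\bu\in C([0,T],V)$ and $\theta\in L^\infty([0,T],L^2)$, the integrand lies in $C([0,T],H^{-1/2})$, so $\bu\in C([0,T],H^{3/2})$, and a second pass ($\bu\cdot\nabla\bu\in L^2$ in two dimensions) gives $\bu\in C([0,T],\cD(A))$.

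For uniqueness, let $(\bu_\ell,\theta_\ell)$, $\ell=1,2$, solve $P^\alpha_{0,0}$ with the same data; set $\bud=\bu_1-\bu_2$, $\xi_\ell=\triangle^{-1}\theta_\ell$ (zero mean), $\xid=\xi_1-\xi_2$, so that $\triangle\xid=\thetad\in L^\infty([0,T],L^2)$ and hence $\xid\in L^\infty([0,T],H^2)$ with $\|\xid\|_{H^2}\le 2K$, where $K:=\max_\ell\|\theta_\ell\|_{L^\infty_TL^2}$. Subtracting the functional equations,
\begin{gather*}
(I+\alpha^2A)\pd{\bud}{t}+B(\bud,\bu_1)+B(\bu_2,\bud)=P_\sigma(\triangle\xid\,\be_2),\\
\pd{\triangle\xid}{t}+\mathcal{B}(\bud,\triangle\xi_1)+\mathcal{B}(\bu_2,\triangle\xid)=0.
\end{gather*}
Acting with the first equation on $\bud$ and with the second on $\xid$, and using \eqref{B:Alt}, \eqref{B:zero}, \eqref{B_theta:def}, an integration by parts exploiting $\nabla\cdot\bu_2=0$, and the Lions--Magenes lemma as in the proof of Theorem~\ref{uniqueness_visc}, one arrives (up to signs immaterial to the estimates) at
\begin{gather*}
\tfrac12\tfrac{d}{dt}\big(|\bud|^2+\alpha^2\|\bud\|^2\big)=\ip{B(\bud,\bud)}{\bu_1}+(\triangle\xid\,\be_2,\bud),\\
\tfrac12\tfrac{d}{dt}\|\xid\|^2=-(\bud\cdot\nabla\xid,\triangle\xi_1)+\int_{\nT^2}\partial_iu_2^j\,\partial_j\xid\,\partial_i\xid\,d\bx.
\end{gather*}
Put $E(t):=|\bud(t)|^2+\alpha^2\|\bud(t)\|^2+\|\xid(t)\|^2$. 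Two of the terms are routine: $|\ip{B(\bud,\bud)}{\bu_1}|\le\|\bu_1\|_{L^\infty}|\bud|\,\|\bud\|\le C_{K,\alpha}E$, by Agmon \eqref{Agmon1/2} (recall $\bu_1\in\cD(A)$) and Young; and, after an integration by parts, $|(\triangle\xid\,\be_2,\bud)|\le\|\xid\|\,\|\bud\|\le\tfrac1{2\alpha}E$.

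The heart of the matter is the two transport terms, which are only \emph{almost} quadratic in $E$. For every $p\in[3,\infty)$, H\"older gives $|(\bud\cdot\nabla\xid,\triangle\xi_1)|\le\|\bud\|_{L^p}\|\nabla\xid\|_{L^{2p/(p-2)}}|\theta_1|$ and $\abs{\int_{\nT^2}\partial_iu_2^j\,\partial_j\xid\,\partial_i\xid\,d\bx}\le\|\nabla\bu_2\|_{L^p}\|\nabla\xid\|_{L^{2p/(p-1)}}^2$. By \eqref{CZ_est} (applied to $\bud$, and to $\nabla\bu_2$, which lies in $H^1$ since $\bu_2\in\cD(A)$) one has $\|\bud\|_{L^p}\le C\sqrt{p-1}\,\|\bud\|_{H^1}\le C_\alpha\sqrt{p}\,E^{1/2}$ and $\|\nabla\bu_2\|_{L^p}\le C\sqrt{p-1}\,\|\nabla\bu_2\|_{H^1}\le C\sqrt{p}\,|A\bu_2|\le C_K\sqrt{p}$, while elliptic regularity, a two-dimensional interpolation inequality, and $\|\xid\|_{H^2}\le 2K$ yield $\|\nabla\xid\|_{L^r}\le C_K\|\xid\|^{2/r}\le C_K E^{1/r}$ for $r$ near $2$; substituting $r=2p/(p-2)$ resp.\ $r=2p/(p-1)$ shows that, for every $p\ge3$,
\[
\tfrac{d}{dt}E\le C_{K,\alpha}\big(E+\sqrt{p}\,E^{1-1/p}\big).
\]
I would then optimize in $p$: for $E<e^{-2}$ the choice $p=\ln(e/E)\ge3$ gives $\sqrt{p}\,E^{1-1/p}\le e\,E\sqrt{\ln(e/E)}$, whence $\tfrac{d}{dt}E\le C'\varphi(E)$ with $\varphi$ a continuous nondecreasing modulus equal to $s\mapsto s\sqrt{\ln(e/s)}$ near $0$ (and linear away from $0$), which satisfies the Osgood condition $\int_{0^+}ds/\varphi(s)=+\infty$. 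Since $E(0)=0$ and $E\in C([0,T])$, Osgood's lemma forces $E\equiv0$ on $[0,T]$, i.e.\ $\bud\equiv0$ and $\thetad=\triangle\xid\equiv0$.

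The principal obstacle is the low regularity of the density: since $\theta$ is merely in $L^2$ one cannot estimate $\thetad$ directly (its transport equation contains $\nabla\theta_2\notin L^2$), and the cure $\xid=\triangle^{-1}\thetad$ generates the transport terms above, which unavoidably carry a factor $\sqrt{p}$ from the $L^p$--$H^1$ inequality \eqref{CZ_est}. That this is still compatible with a logarithmic Gr\"onwall estimate rests on the $\cD(A)$-regularity of $\bu$ keeping $\|\nabla\bu_\ell\|_{L^p}=O(\sqrt{p})$ — the role played in Theorem~\ref{uniqueness_visc} by the viscous dissipation is here played, after Stage~1, by the Voigt term. The remaining care goes into justifying the integrations by parts and duality pairings at the available regularity, the Stage~1 bootstrap, and verifying that the $p\to\infty$ optimization lands on an Osgood-admissible modulus of continuity.
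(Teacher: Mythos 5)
Your proposal is correct, and its overall architecture (existence from Theorem \ref{bouss_v_exist}, an upgrade to $\bu\in L^\infty([0,T],\cD(A))$, then a Yudovich-type uniqueness argument built on $\xi=\triangle^{-1}\theta$) coincides with what the paper does; the difference lies in how each closes the Yudovich step. The paper's proof is only a sketch: it cites \cite{Larios_Titi_2009} for the $\cD(A)$ regularity and then instructs the reader to follow the proof of Theorem \ref{uniqueness_visc} ``almost line by line,'' i.e.\ the $\epsilon$-power argument --- divide $z=|\bud|^2+\|\xid\|^2+\eta$ by $z^{1-\epsilon}$, control $\|\bud\|_{L^\infty}$ by the Br\'ezis--Gallouet-type inequality \eqref{brezis}, integrate, send $\eta\maps0$ and then $\epsilon\maps0$ --- with $\alpha^2$ playing the role of $\nu$. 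You instead avoid \eqref{brezis} altogether, place everything in $L^p$ with the $\sqrt{p}$-growth of \eqref{CZ_est} (as in the anisotropic uniqueness proof of Section \ref{sec:aniso}), and then, rather than iterating on short time intervals and sending $p\maps\infty$ as that proof does, you optimize $p=\ln(e/E)$ to land on the Osgood modulus $s\sqrt{\ln(e/s)}$; I checked that $\int_{0^+}ds/(s\sqrt{\ln(e/s)})=+\infty$ and that your H\"older exponents and interpolation powers combine to $\sqrt{p}\,E^{1-1/p}$ as claimed, so this closes. What your route buys is a single clean application of Osgood's lemma with no $\eta$- or $\epsilon$-limits and no Br\'ezis--Gallouet machinery; what it costs is that it leans on $\bu_\ell\in L^\infty([0,T],\cD(A))$ to get $\|\nabla\bu_\ell\|_{L^p}=O(\sqrt{p})$ and $\|\bu_1\|_{L^\infty}<\infty$, exactly the regularity your Stage 1 supplies (your concrete Galerkin estimate pairing with $A\bu_n$, using $(B(\bu_n,\bu_n),A\bu_n)=0$ in the 2D periodic setting, is more explicit than the paper's citation, and the bootstrap showing every Definition \ref{def:voigt_sol} solution inherits $\cD(A)$ regularity is a point the paper glosses over but that the uniqueness argument genuinely needs). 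Two cosmetic remarks: your constant $K$ should be declared to include $\max_\ell\|\bu_\ell\|_{L^\infty([0,T],\cD(A))}$, since you invoke $|A\bu_2|\leq C_K$ and Agmon on $\bu_1$; and the justification of the duality pairings at the stated regularity is routine given Stage 1 but should be recorded, as you note.
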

\begin{proof}
Here, we only sketch the proof, since the ideas a similar to those given above.  The existence of solutions to \eqref{bouss_v} has already been established in Theorem \ref{bouss_v_exist}.  Thanks to the hypothesis $\bu_0\in \cD(A)$, it is straight-forward to show that $\bu\in C([0,T],\cD(A))$ using, e.g., the methods of  \cite{Larios_Titi_2009} and similarly that $\frac{d\theta}{dt} \in L^2([0,T], H^{-1})$.   One can the prove the uniqueness of solutions  by following the proof of Theorem \ref{uniqueness_visc} almost line by line.  Only some slight modifications to the handling of the terms involving $\|\bu\|$, and in using the parameter $\alpha^2$ rather than $\nu$ is needed.
\end{proof}

\begin{theorem}[Convergence as $\alpha\maps0$]\label{t:Convergence} Given initial data $(\bu_0, \theta_0)\in (H^3(\nT^2)\cap V)\times H^3(\nT^2)$, and $(\bu_0^\alpha,\theta_0^\alpha)\in (H^3(\nT^2)\cap V)\times H^3(\nT^2)$, let $(\bu,\theta)$ and $(\bu^\alpha,\theta^\alpha)$ be the corresponding solutions to the problems $P_{0,0}^0$ and $P_{0,0}^\alpha$, respectively.  Choose an arbitrary  $T\in (0,T_{\text{max}})$, where $T_{\text{max}}$ is the maximal time for which a solution to  the problem  $P_{0,0}^0$ exists and is unique.  Suppose that $\bu_0^\alpha\maps\bu_0$ in $V$ and  $\theta_0^\alpha\maps\theta_0$ in $L^2(\nT^2)$.  Then $\bu^\alpha\maps\bu$ in $L^2([0,T],V)$ and  $\theta^\alpha\maps\theta$ in $L^2([0,T],L^2(\nT^2))$.
\end{theorem}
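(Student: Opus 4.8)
The plan is a Gr\"onwall estimate on the difference of the two solutions, using heavily that $(\bu,\theta)$ is a classical solution on $[0,T]$: since $(\bu_0,\theta_0)\in H^3\times H^3$ and $T<T_{\text{max}}$, the local theory of \cite{Chae_Nam_1997} together with the Beale--Kato--Majda criterion gives $\bu\in C([0,T],H^3\cap V)$ and $\theta\in C([0,T],H^3)$, hence $\partial_t\bu\in C([0,T],\cD(A))$ via its equation, and in particular $\|\nabla\bu\|_{L^\infty}$, $\|\nabla\theta\|_{L^\infty}$ and $\|\partial_t\bu\|$ are bounded on $[0,T]$. On the other side, since $\bu_0^\alpha\in H^3\cap V\subset\cD(A)$ and $\theta_0^\alpha\in L^2$, Theorems~\ref{bouss_v_exist} and~\ref{voigt_unique} provide a unique solution $(\bu^\alpha,\theta^\alpha)$ of $P_{0,0}^\alpha$ with $\bu^\alpha\in C([0,T],\cD(A))$ and the transport bound $\|\theta^\alpha(t)\|_{L^2}\le\|\theta_0^\alpha\|_{L^2}$; both solutions are therefore regular enough to justify every manipulation below.

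First I would write the difference system. Set $\bud:=\bu-\bu^\alpha$ and $\thetad:=\theta-\theta^\alpha$. Subtracting the functional form \eqref{e:functional_voigt} of $P_{0,0}^\alpha$ from that of $P_{0,0}^0$ (i.e.\ \eqref{e:functional_nu} with $\nu=0$) and using $(I+\alpha^2A)\partial_t\bu=\partial_t\bu+\alpha^2A\partial_t\bu$, one obtains
\begin{align*}
   (I+\alpha^2A)\pd{\bud}{t}+B(\bud,\bu)+B(\bu^\alpha,\bud)&=P_\sigma(\thetad\be_2)+\alpha^2A\pd{\bu}{t},\\
   \pd{\thetad}{t}+\mathcal{B}(\bud,\theta)+\mathcal{B}(\bu^\alpha,\thetad)&=0 ,
\end{align*}
with $\bud(0)=\bu_0-\bu_0^\alpha$ and $\thetad(0)=\theta_0-\theta_0^\alpha$. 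The only term with no counterpart in a standard difference estimate is $\alpha^2A\partial_t\bu$, which plays the role of an $O(\alpha^2)$ forcing because $\partial_t\bu$ is bounded on $[0,T]$.

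Next I would take the inner product of the first equation with $\bud$ in $H$ and of the second with $\thetad$ in $L^2$, using the Lions--Magenes lemma to identify the time derivatives. By Lemmas~\ref{B:prop} and~\ref{B_theta:prop} the terms $(B(\bu^\alpha,\bud),\bud)$ and $\ip{\mathcal{B}(\bu^\alpha,\thetad)}{\thetad}$ vanish; the rest are controlled by $|(B(\bud,\bu),\bud)|\le\|\nabla\bu\|_{L^\infty}|\bud|^2$, $|\ip{\mathcal{B}(\bud,\theta)}{\thetad}|\le\|\nabla\theta\|_{L^\infty}|\bud|\,|\thetad|$, $|(P_\sigma(\thetad\be_2),\bud)|\le|\thetad|\,|\bud|$, and $\alpha^2|(A\partial_t\bu,\bud)|=\alpha^2|((\partial_t\bu,\bud))|\le\tfrac{\alpha^2}{2}\|\partial_t\bu\|^2+\tfrac{\alpha^2}{2}\|\bud\|^2$. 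Setting $E(t):=|\bud(t)|^2+\alpha^2\|\bud(t)\|^2+|\thetad(t)|^2$, these bounds give
\[
   \frac{d}{dt}E\le C\bigl(1+\|\nabla\bu\|_{L^\infty}+\|\nabla\theta\|_{L^\infty}\bigr)E+\alpha^2\|\partial_t\bu\|^2 .
\]
Since $\int_0^T\bigl(1+\|\nabla\bu(s)\|_{L^\infty}+\|\nabla\theta(s)\|_{L^\infty}\bigr)ds<\infty$ and $\int_0^T\|\partial_t\bu(s)\|^2ds<\infty$, Gr\"onwall's inequality yields
\[
   \sup_{t\in[0,T]}E(t)\le C(T)\Bigl(|\bu_0-\bu_0^\alpha|^2+\alpha^2\|\bu_0-\bu_0^\alpha\|^2+|\theta_0-\theta_0^\alpha|^2+\alpha^2\Bigr).
\]
By hypothesis $\bu_0^\alpha\to\bu_0$ in $V$ (hence in $H$) and $\theta_0^\alpha\to\theta_0$ in $L^2$, so the right-hand side tends to $0$ as $\alpha\to0$. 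Consequently $\sup_{[0,T]}\bigl(|\bud|^2+|\thetad|^2\bigr)\to0$ and $\sup_{[0,T]}\alpha^2\|\bud\|^2\to0$; in particular $\bu^\alpha\to\bu$ in $C([0,T],H)$ and $\theta^\alpha\to\theta$ in $C([0,T],L^2)$, which already delivers $\theta^\alpha\to\theta$ in $L^2([0,T],L^2)$ and $\bu^\alpha\to\bu$ in $L^2([0,T],H)$.

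The remaining, and most delicate, point is to upgrade the convergence of $\bu^\alpha$ to $L^2([0,T],V)$: the difference estimate controls only $\sup_{[0,T]}|\bud(t)|$ and $\sup_{[0,T]}\alpha^2\|\bud(t)\|^2$, so it does not on its own bound $\int_0^T\|\bud(t)\|^2dt$. The plan here is to combine the strong convergence $\bu^\alpha\to\bu$ in $C([0,T],H)$ with a uniform-in-$\alpha$ bound for $\bu^\alpha$ in $L^2([0,T],V)$ — which one would seek to extract from the Voigt vorticity equation $(I-\alpha^2\triangle)\partial_t\omega^\alpha+\bu^\alpha\cdot\nabla\omega^\alpha=\partial_1\theta^\alpha$ and the transported-norm identities $\|\theta^\alpha(t)\|_{L^p}=\|\theta_0^\alpha\|_{L^p}$ — and then to promote the resulting weak $L^2([0,T],V)$ convergence to a strong one by matching $\|\bu^\alpha\|_{L^2([0,T],V)}$ with $\|\bu\|_{L^2([0,T],V)}$ through weak lower semicontinuity and the energy identity for $P_{0,0}^\alpha$. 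Securing enough uniform-in-$\alpha$ control of $\bu^\alpha$ for this last step — against the fact that the Voigt term $-\alpha^2\triangle\partial_t\bu$ regularizes but provides no dissipation — is the step I expect to be the main obstacle.
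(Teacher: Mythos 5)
Your completed portion is essentially the paper's own argument: the same difference system, the same energy functional $|\bud|^2+\alpha^2\|\bud\|^2+|\thetad|^2$, and a Gr\"onwall closure in which the Voigt term acts as an $O(\alpha^2)$ forcing. The only real difference is how that forcing is estimated: the paper substitutes the Boussinesq equation for $\partial_t\bu$ into $\alpha^2\pair{\triangle\partial_t\bu}{\bud}$ and integrates by parts, using $\bu\in H^3$ and $\theta\in H^2$ to land on a bound of the form $\alpha^2K|\bud|$; you instead write $\alpha^2\abs{((\partial_t\bu,\bud))}\leq\tfrac{\alpha^2}{2}\|\partial_t\bu\|^2+\tfrac{\alpha^2}{2}\|\bud\|^2$ and absorb the second term into the $\alpha^2\|\bud\|^2$ part of the energy. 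Both are legitimate; yours is slightly more economical since it only needs $\partial_t\bu\in L^2([0,T],V)$ rather than the full $H^3$ machinery at that step.

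The step you flag as the main obstacle --- passing from $\sup_t\alpha^2\|\bud(t)\|^2\maps0$ to $\bu^\alpha\maps\bu$ in $L^2([0,T],V)$ --- is precisely the step the paper does not supply either: its final Gr\"onwall bound controls only $\alpha^2\|\bud\|^2$, and the proof then asserts convergence in $L^\infty([0,T],V)$ without accounting for the factor $\alpha^{-2}$ needed to extract $\|\bud\|$. So you have not overlooked an idea that the paper contains; you have correctly isolated a genuine weak point of the argument. That said, your proposed repair is unlikely to close it as written: in the inviscid, non-diffusive Voigt vorticity identity $\frac{1}{2}\frac{d}{dt}\pnt{|\omega^\alpha|^2+\alpha^2\|\omega^\alpha\|^2}=(\partial_1\theta^\alpha,\omega^\alpha)$, the right-hand side costs either a derivative on $\omega^\alpha$ (controlled only with a prefactor $\alpha^{-1}$) or a derivative on $\theta^\alpha$, and the transported quantity $\theta^\alpha$ has no uniform-in-$\alpha$ $H^1$ bound since $\|\nabla\bu^\alpha\|_{L^\infty}$ is not uniformly controlled. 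A uniform-in-$\alpha$ bound for $\bu^\alpha$ in $L^2([0,T],V)$ is therefore not readily available, and the honest conclusion of the energy estimate is convergence of $\bu^\alpha$ in $C([0,T],H)$ together with $\alpha\|\bud\|_{L^\infty([0,T],V)}\maps0$, plus $\theta^\alpha\maps\theta$ in $C([0,T],L^2)$.
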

\begin{proof}
Here, for simplicity, we only work formally, but note that the results can be made rigorous by using the techniques discussed above.   Under the hypotheses on the initial conditions, it was proven in \cite{Chae_Nam_1997} that there exists a time $T>0$ and a unique $(\bu,\theta) \in C([0,T],H^3({\nT^2})\cap V)\times C([0,T],H^3({\nT^2})\cap V)$ solving the problem $P_{0,0}^0$, (in particular, it holds that $T_{\text{max}}>0$).  Thanks to Theorem \eqref{bouss_v_exist}, we know that there also exists a unique solution to the problem $P_{0,0}^\alpha$, namely  $(\bu^\alpha,\theta^\alpha)\in C([0,T], V)\times C([0,T],L^2({\nT^2}))$.  Subtracting the corresponding equations from these problems (that is, \eqref{bouss} with $\nu=\kappa=0$ and \eqref{bouss_v}) yields
   \begin{subequations}\label{bouss_subtract}
\begin{align}
\label{bouss_subtract_mo}
-\alpha^2 \frac{d}{dt}\triangle\bu^\alpha+\frac{d}{dt}(\bu-\bu^\alpha) &=-B (\bu-\bu^\alpha,\bu)
-B (\bu^\alpha, \bu-\bu^\alpha) +P_\sigma((\theta^\alpha-\theta) \be_2), \\
\label{bouss_subtract_den}
\frac{d}{dt}(\theta^\alpha-\theta)
&=- ((\bu-\bu^\alpha)\cdot\nabla)\theta -(\bu^\alpha\cdot\nabla)(\theta-\theta^\alpha).
\end{align}
\end{subequations}
Let us take the inner product of \eqref{bouss_subtract_mo} with $\bu^\alpha-\bu$ and  of \eqref{bouss_subtract_den} with $\theta^\alpha-\theta$, and add the results.  After integrating by parts and rearranging the terms, we find
\begin{align}
&\quad\label{conv_diff_est}
 \frac{1}{2}\frac{d}{dt}\pnt{\alpha^2\|\bu-\bu^\alpha\|^2+|\bu-\bu^\alpha|^2
 +|\theta-\theta^\alpha|^2}
 \\&=\notag
-(B (\bu-\bu^\alpha,\bu),\bu^\alpha-\bu)
+((\theta^\alpha-\theta) \be_2,\bu^\alpha-\bu)
\\&\quad\notag
-(((\bu-\bu^\alpha)\cdot\nabla)\theta,\theta^\alpha-\theta)
-\alpha^2\pair{\triangle\bu_t}{\bu-\bu^\alpha}
\\&\leq \notag
\|\nabla\bu\|_{L^\infty}|\bu-\bu^\alpha|^2
+|\theta^\alpha-\theta||\bu^\alpha-\bu|
\\&\quad\notag
\|\nabla\theta\|_{L^\infty}|\bu-\bu^\alpha||\theta^\alpha-\theta|
-\alpha^2\pair{\triangle\bu_t}{\bu-\bu^\alpha}
\\&\leq \notag
K(|\bu-\bu^\alpha|^2
+|\theta^\alpha-\theta|^2)
-\alpha^2\pair{\triangle\bu_t}{\bu-\bu^\alpha},
\end{align}
where we have used Young's inequality and the fact that $\|\nabla\bu\|_{L^\infty},\|\nabla\theta\|_{L^\infty}<\infty$.  It remains to estimate the integral on the left-hand side of the equality. Using the fact that $(\bu,\theta)$ satisfies \eqref{bouss_den}, we have
\begin{align}
   &\quad \label{conv_est_H3}
   -\alpha^2\pair{ \triangle\bu_t}{\bu-\bu^\alpha}
\\&=\notag
  -\alpha^2\pair{ \triangle[-\bu\cdot\nabla \bu -\nabla p + \theta \be_2]}{\bu-\bu^\alpha}
  \\&=\notag
\alpha^2\pair{ \triangle\bu\cdot\nabla \bu+2(\nabla\bu\cdot\nabla) \nabla\bu+\bu\cdot\nabla \triangle\bu - \triangle\theta \be_2}{\bu-\bu^\alpha}
 \\&\leq\notag
 C\alpha^2|\triangle \bu|\|\nabla\bu\|_{L^\infty}|\bu-\bu^\alpha|+\|\bu\|_{L^\infty}|\nabla\triangle \bu|
 |\bu-\bu^\alpha| + |\triangle\theta||\bu-\bu^\alpha|
  \\&\leq\notag
 C\alpha^2\|\bu\|_{H^2}\|\bu\|_{H^3}|\bu-\bu^\alpha|+C\|\bu\|_{H^2}\| \bu\|_{H^3}
|\bu-\bu^\alpha| + C\|\theta\|_{H^2}|\bu-\bu^\alpha|
  \\&\leq\notag
 \alpha^2K|\bu-\bu^\alpha|.
\end{align}
For the second equality, we used \eqref{deRham}.
Combining \eqref{conv_diff_est} with \eqref{conv_est_H3} and using Gr\"onwall's inequality yields
\begin{align}
  &\quad\notag
  \alpha^2\|\bu(t)-\bu^\alpha(t)\|^2+|\bu(t)-\bu^\alpha(t)|^2+|\theta(t)-\theta^\alpha(t)|^2
 \\&\leq
 C\pnt{\alpha^2\|\bu_0-\bu^\alpha_0\|^2+|\bu_0-\bu^\alpha_0|^2
 +|\theta_0-\theta^\alpha_0|^2}e^{K(\alpha^2+\alpha)t}.
\end{align}
Thus, if $\bu^\alpha_0\maps \bu_0$ in $V$ and $\theta_0^\alpha\maps \theta_0$ in $L^2(\nT^2)$, as $\alpha\maps 0$, (in particular, if $\bu^\alpha_0= \bu_0$ and $\theta_0^\alpha= \theta_0$ for all $\alpha>0$), then $\bu^\alpha\maps \bu$ in $L^\infty([0,T],V)$ and $\theta^\alpha\maps \theta$ in $L^\infty([0,T],L^2)$, as $\alpha\maps 0$.
\end{proof}

\begin{theorem}[Blow-up criterion]\label{t:blow_up_criterion}
With the same notation and assumptions of Theorem \ref{t:Convergence}, suppose that for some $T_*<\infty$, we have
\begin{align}\label{blow_up_ineq}
   \sup_{t\in[0,T_*)}\limsup_{\alpha\maps0}\alpha^2\|\bu^\alpha(t)\|^2>0.
\end{align}
Then the solutions to $P_{0,0}^0$ become singular in the time interval $[0,T_*)$.
\end{theorem}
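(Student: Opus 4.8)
The plan is to argue by contraposition, feeding the convergence estimate obtained in the proof of Theorem~\ref{t:Convergence} into the hypothesis \eqref{blow_up_ineq}. First I would assume the negation of the conclusion: that the (unique, by \cite{Chae_Nam_1997}) solution $(\bu,\theta)$ of $P_{0,0}^0$ with initial data $(\bu_0,\theta_0)$ does not become singular anywhere in $[0,T_*)$. In the notation of Theorem~\ref{t:Convergence} this says exactly that the maximal existence time satisfies $T_{\text{max}}\geq T_*$. The goal is then to deduce $\sup_{t\in[0,T_*)}\limsup_{\alpha\maps0}\alpha^2\|\bu^\alpha(t)\|^2=0$, which contradicts \eqref{blow_up_ineq}.

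Next I would fix an arbitrary $t\in[0,T_*)$ and choose $T$ with $t<T<T_{\text{max}}$, which is possible since $T_{\text{max}}\geq T_*>t$. On the interval $[0,T]$ Theorem~\ref{t:Convergence} applies; the Gr\"onwall argument in its proof, carried out on the quantity $\alpha^2\|\bu-\bu^\alpha\|^2+|\bu-\bu^\alpha|^2+|\theta-\theta^\alpha|^2$, gives
\begin{align*}
   &\sup_{s\in[0,T]}\pnt{\alpha^2\|\bu(s)-\bu^\alpha(s)\|^2+|\bu(s)-\bu^\alpha(s)|^2+|\theta(s)-\theta^\alpha(s)|^2}
   \\&\qquad\qquad\Maps 0\qquad\text{as }\alpha\maps0,
\end{align*}
because the standing hypotheses furnish $\bu_0^\alpha\maps\bu_0$ in $V$ and $\theta_0^\alpha\maps\theta_0$ in $L^2$, while the forcing the argument produces (coming from the term $-\alpha^2\ip{\triangle\bu_t}{\bu-\bu^\alpha}$, controlled by $\|\bu\|_{C([0,T],H^3)}$) is $O(\alpha^2)$ and hence also vanishes in the limit. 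In particular $\sup_{s\in[0,T]}\alpha^2\|\bu(s)-\bu^\alpha(s)\|^2\maps0$. I would then combine this with the elementary inequality $\alpha^2\|\bu^\alpha(t)\|^2\leq 2\alpha^2\|\bu(t)\|^2+2\alpha^2\|\bu(t)-\bu^\alpha(t)\|^2$ and the finiteness of $\|\bu(t)\|$ (valid since $t<T_{\text{max}}$, so $\bu(t)\in H^3\cap V$) to conclude that $\alpha^2\|\bu^\alpha(t)\|^2\maps0$, and therefore $\limsup_{\alpha\maps0}\alpha^2\|\bu^\alpha(t)\|^2=0$. Since $t\in[0,T_*)$ was arbitrary, taking the supremum over $t\in[0,T_*)$ yields $\sup_{t\in[0,T_*)}\limsup_{\alpha\maps0}\alpha^2\|\bu^\alpha(t)\|^2=0$, contradicting \eqref{blow_up_ineq}. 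Hence the solution to $P_{0,0}^0$ must become singular somewhere in $[0,T_*)$, which is the assertion of the theorem.

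I expect the main (and only mildly delicate) obstacle to be the bookkeeping between $T_*$ and $T_{\text{max}}$: Theorem~\ref{t:Convergence} controls $\bu^\alpha$ only on intervals $[0,T]$ with $T<T_{\text{max}}$, whereas \eqref{blow_up_ineq} lives on the open interval $[0,T_*)$. This is harmless precisely because $[0,T_*)$ is open, so each of its points is interior to an admissible $[0,T]$ and one never needs an estimate up to a possible blow-up time — only for each fixed earlier instant. The other point to keep straight is that the relevant output of Theorem~\ref{t:Convergence} is not convergence $\bu^\alpha\maps\bu$ in $V$, but the weaker uniform-in-time smallness of $\alpha^2\|\bu-\bu^\alpha\|^2$ — equivalently, a uniform-in-$\alpha$ bound on $\alpha^2\|\bu^\alpha(t)\|^2$ over the lifespan of the $P_{0,0}^0$ solution — which is exactly what the energy estimate in that proof provides, and all that the present argument requires.
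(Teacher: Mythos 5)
Your argument is correct, and it reaches the conclusion by a genuinely different mechanism than the paper. The paper also argues by contradiction, but instead of decomposing $\alpha^2\|\bu^\alpha(t)\|^2$ via the triangle inequality, it writes down the energy identity for $P^\alpha_{0,0}$, namely $\alpha^2\|\bu^\alpha(t)\|^2+|\bu^\alpha(t)|^2=\alpha^2\|\bu_0\|^2+|\bu_0|^2+2\int_0^t(\theta^\alpha(s)\be_2,\bu^\alpha(s))\,ds$, passes to the $\limsup$ as $\alpha\maps 0$ using Theorem \ref{t:Convergence}, and then compares the result with the energy equality $|\bu(t)|^2=|\bu_0|^2+2\int_0^t(\theta(s)\be_2,\bu(s))\,ds$ satisfied by the regular $H^3$ solution of $P^0_{0,0}$; the discrepancy is exactly $\limsup_{\alpha\maps0}\alpha^2\|\bu^\alpha(t)\|^2$, which is therefore zero. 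Your route, via $\alpha^2\|\bu^\alpha(t)\|^2\leq 2\alpha^2\|\bu(t)\|^2+2\alpha^2\|\bu(t)-\bu^\alpha(t)\|^2$ together with the uniform-in-time Gr\"onwall bound from the proof of Theorem \ref{t:Convergence}, is more elementary: it avoids having to justify the energy \emph{equality} (rather than inequality) for the limit system, and it isolates precisely which output of the convergence proof is needed (the $L^\infty$-in-time smallness of $\alpha^2\|\bu-\bu^\alpha\|^2$, not convergence in $V$). Both proofs ultimately lean on the same Gr\"onwall estimate — note that the $L^2([0,T],V)$ convergence in the bare statement of Theorem \ref{t:Convergence} would not suffice for either argument, so your explicit appeal to the proof's uniform-in-time bound is the right move — and your handling of the $T_*$ versus $T_{\text{max}}$ bookkeeping is also sound.
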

\begin{proof}
   To get a contradiction, suppose that $(\bu,\theta)$ stays bounded in $H^3\cap V({\nT^2})\times H^3({\nT^2})$ but that \eqref{blow_up_ineq}.   Taking the inner product of the momentum equation with $\bu^\alpha$ and integrating, we find
   \begin{align*}
      \alpha^2\|\bu^\alpha(t)\|^2+|\bu^\alpha(t)|^2
      &=\alpha^2\|\bu_0\|^2+|\bu_0|^2+2\int_0^t(\theta^\alpha(s)\be_2,\bu^\alpha(s))\,ds.
   \end{align*}
Taking the $\limsup$ as $\alpha\maps 0^+$ then by virtue of Theorem \ref{bouss_v_exist} and Theorem \ref{t:Convergence} we have
\begin{align}\label{blow_up_limsup}
      \limsup_{\alpha\maps0^+}\alpha^2\|\bu^\alpha(t)\|^2+|\bu(t)|^2
      &=|\bu_0|^2+2\int_0^t(\theta(s)\be_2,\bu(s))\,ds.
   \end{align}

However, given the hypotheses on the initial data, and the well-posedness results of \cite{Chae_Nam_1997}, it is straight-forward to prove the following energy equality:
\begin{align*}
      |\bu(t)|^2&=|\bu_0|^2+2\int_0^t(\theta(s),\bu(s))\,ds,
   \end{align*}
   so that \eqref{blow_up_limsup} contradicts \eqref{blow_up_ineq}.
\end{proof}

\section{The 3D Boussinesq-Voigt Equations} \label{s:3D_Bouss_Voigt}

We now briefly outline an extension of the previous results to the case of the three dimensional Boussinesq-Voigt equations.  The details are very similar to the 2D case, so we only prove formal \textit{a priori} estimates.  In order to control the higher-order derivatives, we add a diffusion term to the transport equation.    This approach is similar to that used in \cite{Larios_Titi_2009,Larios_Titi_2010_MHD,Catania_Secchi_2009,Catania_2009} to prove global well-posedness for two Voigt-regularizations of the 3D MHD equations.  We consider the following system, written in functional form, which we refer to as $P_{0,\kappa}^\alpha$.
\begin{subequations}\label{bouss_v_3D}
  \begin{align}
  \label{bouss_v_3D_mom}
  (I+\alpha^2A)\pd{\bu}{t} +  B(\bu,\bu) &= P_\sigma(\theta\be_3),
  \\\label{bouss_v_3D_den}
   \pd{\theta}{t} +\mathcal{B}(\bu,\theta) &=  \kappa \triangle\theta,
   \\\label{bouss_v_3D_IC}
\bu(0)=\bu_0,\quad\theta(0)&=\theta_0.
   \end{align}
   \end{subequations}
   \begin{remark}
      Note that one could also control the higher-order derivatives by adding the Voigt-term $-\beta^2\triangle\frac{d}{dt}\theta$, $\beta>0$ to the left-hand side of \eqref{bouss_v_3D_den} to allow for case when $\kappa=0$.  Although the resulting regularized system is well-posed,
      for the sake of brevity, we do not pursue this type of additional Voigt-regularization here.  However, a similar idea has been investigated in the context of the MHD equations in \cite{Larios_Titi_2009} (cf. \cite{Larios_Titi_2010_MHD}), and also in \cite{Catania_Secchi_2009}.
   \end{remark}

\begin{definition}\label{def:bouss_v_3D}
   Let $\bu_0\in V\cap H^3(\nT^3)$, $\theta_0\in L^2(\nT^3)$.  For a given $T>0$, we say that $(\bu,\theta)$ is a \textit{solution}
   to the problem $P_{0,\kappa}^\alpha$ (in three-dimensions) on the interval $[0,T]$ if it satisfies \eqref{bouss_v_3D_mom} in the sense of $L^2([0,T],V)$ and \eqref{bouss_v_3D_den} in the sense of $L^2([0,T],H^{-1})$.
Furthermore, $\bu\in C([0,T],V\cap H^3)$, $\pd{\bu}{t}\in L^\infty([0,T],\mathcal{D}(A))\cap L^2([0,T],V\cap H^3)$, $\theta\in L^2([0,T],H^1)\cap C_w([0,T],L^2)$ and $\pd{\theta}{t}\in L^2([0,T],H^{-1})$.
\end{definition}

 \begin{theorem}
    Let $\bu_0\in V\cap H^3(\nT^3)$, $\theta_0\in L^2(\nT^3)$, and let $T>0$ be arbitrary.  Then there exists a solution to \eqref{bouss_v_3D} in the sense of Definition \ref{def:bouss_v_3D}.   Furthermore, if $\theta_0\in L^p(\nT^3)$ for some $p\in[2,\infty]$, then $\theta\in L^\infty([0,T],L^p)$.  In the case $\theta_0\in L^\infty(\nT^3)$, the solution is unique.
 \end{theorem}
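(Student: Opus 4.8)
The plan is to run the Galerkin scheme of Theorem~\ref{bouss_v_exist} verbatim for \eqref{bouss_v_3D} in dimension three, recording only the new \emph{a priori} estimates, since the passage to the limit and the recovery of the initial data are routine exactly as in Theorems~\ref{exist_weak_visc}--\ref{bouss_v_exist}. Two basic identities come first. Testing \eqref{bouss_v_3D_den} with $\theta_n$ and using \eqref{B_theta:zero} gives $\tfrac12\tfrac{d}{dt}|\theta_n|^2+\kappa\|\theta_n\|^2=0$, hence $|\theta_n(t)|^2+2\kappa\int_0^t\|\theta_n(s)\|^2\,ds\le|\theta_0|^2$, so $\theta_n$ is bounded uniformly in $n$ in $L^\infty([0,T],L^2)\cap L^2([0,T],H^1)$. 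Testing \eqref{bouss_v_3D_mom} with $\bu_n$ and using \eqref{B:zero} gives, as in \eqref{v_u_en}, $|\bu_n(t)|^2+\alpha^2\|\bu_n(t)\|^2\le\bigl(\sqrt{|\bu_0|^2+\alpha^2\|\bu_0\|^2}+|\theta_0|T\bigr)^2$, so $\bu_n$ is bounded in $L^\infty([0,T],V)$.

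The heart of the matter, and the step I expect to be the main obstacle, is the $H^3$ bound on $\bu$ demanded by Definition~\ref{def:bouss_v_3D}: the buoyancy forcing $P_\sigma(\theta\be_3)$ is only in $L^2$, so a differential energy estimate directly at the $H^3$ level cannot close; one must instead exploit that $(I+\alpha^2A)^{-1}$ gains two spatial derivatives and bootstrap the regularity of $\partial_t\bu$ from the equation itself. First I would upgrade the velocity bound to $L^\infty([0,T],\cD(A))$ by testing \eqref{bouss_v_3D_mom} with $A\bu_n$: in three dimensions $(B(\bu_n,\bu_n),A\bu_n)$ need not vanish, but it is controlled by $\|\bu_n\|_{L^\infty}\|\nabla\bu_n\|\,|A\bu_n|\le C(|\bu_n|+|A\bu_n|)\|\bu_n\|\,|A\bu_n|$ (using $H^2(\nT^3)\hookrightarrow L^\infty$ and $|A\bu_n|\cong\|\bu_n\|_{H^2}$), and since the Voigt term produces $\tfrac12\tfrac{d}{dt}\alpha^2|A\bu_n|^2$ on the left, together with the already-established bounds on $|\bu_n|,\|\bu_n\|$ and the elementary estimate $(\theta_n\be_3,A\bu_n)\le|\theta_0||A\bu_n|$, Gr\"onwall's inequality yields (for $\alpha$ fixed) a uniform bound on $\|\bu_n\|_{H^2}$. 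Consequently $B(\bu_n,\bu_n)=P_\sigma(\bu_n\cdot\nabla\bu_n)$ is bounded in $L^\infty([0,T],H^1)$ (since $\|\bu_n\cdot\nabla\bu_n\|_{H^1}\le C\|\bu_n\|_{H^2}^2$ on $\nT^3$), while $P_\sigma(\theta_n\be_3)$ is bounded in $L^\infty([0,T],L^2)\cap L^2([0,T],H^1)$; applying $(I+\alpha^2A)^{-1}$ to $\partial_t\bu_n=(I+\alpha^2A)^{-1}P_n[-B(\bu_n,\bu_n)+P_\sigma(\theta_n\be_3)]$ and using the two-derivative gain gives $\partial_t\bu_n$ bounded in $L^\infty([0,T],\cD(A))\cap L^2([0,T],V\cap H^3)$, whence integrating in time with $\bu_0\in H^3$ bounds $\bu_n$ in $L^\infty([0,T],V\cap H^3)$ and yields $\bu\in C([0,T],V\cap H^3)$. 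The same reading of \eqref{bouss_v_3D_den} gives $\partial_t\theta_n$ bounded in $L^2([0,T],H^{-1})$. With all of these bounds, Banach--Alaoglu and the Aubin Compactness Theorem supply a subsequence converging strongly in $L^2([0,T],H^2)$ (for $\bu_n$) and in $L^2([0,T],L^2)$ (for $\theta_n$) and weak-$*$ in the remaining norms, the nonlinear-term passage to the limit being identical to that in Theorem~\ref{bouss_v_exist}; and $\theta\in C_w([0,T],L^2)$ follows from the Arzel\`a--Ascoli argument of Step~3 of Theorem~\ref{exist_weak_visc} (indeed $\theta\in C([0,T],L^2)$ here, since $\theta\in L^2([0,T],H^1)$ and $\partial_t\theta\in L^2([0,T],H^{-1})$).

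For the $L^p$ propagation I would argue on the genuine transport--diffusion equation, legitimate at the level of the smooth Galerkin approximants since each $\bu_n\in H^3$ is a classical solution: testing \eqref{bouss_v_3D_den} with $|\theta_n|^{p-2}\theta_n$ and integrating by parts, the transport contribution equals $-\tfrac1p\int_{\nT^3}\bu_n\cdot\nabla|\theta_n|^p\,d\bx=\tfrac1p\int_{\nT^3}(\nabla\cdot\bu_n)|\theta_n|^p\,d\bx=0$ and the diffusion contribution equals $-\kappa(p-1)\int_{\nT^3}|\nabla\theta_n|^2|\theta_n|^{p-2}\,d\bx\le0$, so $\|\theta_n(t)\|_p\le\|\theta_n(0)\|_p$; weak-$*$ lower semicontinuity in $L^\infty([0,T],L^p)$ then gives $\theta\in L^\infty([0,T],L^p)$ for $p\in[2,\infty)$, and for $p=\infty$ the Hopf--Stampacchia truncation argument of Step~4 of Theorem~\ref{exist_weak_visc} applies essentially verbatim (the extra Voigt and diffusion terms contribute only favorably), giving $\|\theta\|_{L^\infty([0,T],L^\infty)}\le\|\theta_0\|_{L^\infty}$.

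Finally, uniqueness when $\theta_0\in L^\infty$: let $(\bu_\ell,\theta_\ell)$, $\ell=1,2$, be two solutions, set $\bud=\bu_1-\bu_2$, $\thetad=\theta_1-\theta_2$, and observe that, unlike in the 2D non-diffusive setting, the substitution $\theta=\triangle\xi$ is not needed here because $\bu_\ell\in C([0,T],H^3)\hookrightarrow C([0,T],W^{1,\infty}(\nT^3))$ while $\theta_\ell\in L^\infty([0,T],L^\infty)$ by the preceding step. Subtracting the equations, testing the momentum difference with $\bud$ and the temperature difference with $\thetad$, and using \eqref{B:zero}, \eqref{B_theta:zero} and the Lions--Magenes lemma, one obtains
\begin{align*}
\tfrac12\tfrac{d}{dt}\bigl(|\bud|^2+\alpha^2\|\bud\|^2\bigr)
&\le \|\nabla\bu_2\|_{L^\infty}\,|\bud|^2+|\thetad|\,|\bud|, \\
\tfrac12\tfrac{d}{dt}|\thetad|^2+\kappa\|\thetad\|^2
&\le \|\theta_2\|_{L^\infty}\,|\bud|\,\|\thetad\|
\le \tfrac{\kappa}{2}\|\thetad\|^2+\tfrac{1}{2\kappa}\|\theta_2\|_{L^\infty}^2\,|\bud|^2 .
\end{align*}
Adding these and using that $\|\nabla\bu_2(\cdot)\|_{L^\infty}$ and $\|\theta_2\|_{L^\infty}$ are bounded on $[0,T]$ gives $\tfrac{d}{dt}\bigl(|\bud|^2+\alpha^2\|\bud\|^2+|\thetad|^2\bigr)\le C\bigl(|\bud|^2+\alpha^2\|\bud\|^2+|\thetad|^2\bigr)$, and Gr\"onwall's inequality with vanishing initial data forces $\bud\equiv0$ and $\thetad\equiv0$ on $[0,T]$, completing the proof.
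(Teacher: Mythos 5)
Your proposal is essentially correct and reaches the same conclusion, but it departs from the paper's argument at two substantive points, both in interesting ways. For the $H^3$ bound on $\bu$, the paper closes a direct differential energy estimate at the top level: it tests \eqref{bouss_v_3D_mom} with $A^2\bu$, integrates the buoyancy term by parts once to get $\abs{(\theta\be_3,\triangle^2\bu)}\leq\|\theta\|\,\|A\bu\|$, and then applies Gr\"onwall using the fact that $\|\theta\|^2\in L^1([0,T])$ (supplied by $\kappa>0$); so your remark that such an estimate ``cannot close'' is not quite right. Your alternative --- bounding $B(\bu,\bu)$ in $L^\infty_tH^1_x$ via the $H^2$ bound and then reading $\partial_t\bu=(I+\alpha^2A)^{-1}\bigl[-B(\bu,\bu)+P_\sigma(\theta\be_3)\bigr]$ to gain two derivatives --- is nevertheless valid and arguably cleaner, and it relies on exactly the same input ($\theta\in L^2_tH^1_x$, hence the need for $\kappa>0$) to land $\partial_t\bu$ in $L^2_t(V\cap H^3)$. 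For uniqueness, the paper retains the substitution $\xid=\triangle^{-1}\thetad$ from the 2D non-diffusive sections and estimates $\|\xid\|$, whereas you work directly with $|\thetad|$ in $L^2$, absorbing $\|\thetad\|$ into the dissipation $\kappa\|\thetad\|^2$ and using $\theta_1\in L^\infty_tL^\infty_x$ together with $\bu_\ell\in C([0,T],H^3)\hookrightarrow C([0,T],W^{1,\infty})$. This is a genuine simplification: the $\triangle^{-1}$ device is needed only when there is no diffusion on $\theta$, and your proof makes that visible. (Two cosmetic slips: the cross terms should carry $\|\nabla\bu_1\|_{L^\infty}$ and $\|\theta_1\|_{L^\infty}$, not the index-$2$ quantities, since it is $\bud\cdot\nabla\bu_1$ and $\nabla\cdot(\bud\,\theta_1)$ that survive the cancellations.)

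One wrinkle worth fixing: your $L^p$ propagation for $\theta$ is carried out on the Galerkin approximants, but in the scheme \eqref{bouss_v_Gal} the temperature equation reads $\partial_t\theta_n+P_n\nabla\cdot(\bu_n\theta_n)=\kappa\triangle\theta_n$, and $|\theta_n|^{p-2}\theta_n$ is not in the range of $P_n$, so the transport term does not cancel as claimed. The paper sidesteps this by proving the $L^p$ bounds on the limiting solution after mollifying $\theta_0$ (so that $\theta^\epsilon\in L^\infty([0,T],H^2)$ and the multiplier $|\theta^\epsilon|^{p-2}\theta^\epsilon$ is admissible), then letting the mollification parameter tend to zero; for $p=\infty$ both you and the paper fall back on the Hopf--Stampacchia truncation, which is applied to the limit solution where $\theta\in L^2([0,T],H^1)$ makes the test function $(\theta-\|\theta_0\|_{L^\infty})^+$ legitimate. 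This is a standard and repairable point rather than a genuine gap, but as written the cancellation step is not justified.
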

 \begin{proof}
    As mentioned above, we only establish formal \textit{a priori} estimates here.  Suppose for a moment  that $\theta_0\in L^p(\nT^3)$.  Formally taking the inner product of \eqref{bouss_v_3D_den} with $|\theta|^{p-2}\theta$, $p\in[2,\infty)$, we find as above that
    \begin{align}\label{bouss_v_3D_theta_Lp}
\frac{1}{p}\frac{d}{dt}\|\theta\|_{L^p}^p+\kappa(p-1)\int_{\nT^3}|\nabla\theta|^2|\theta|^{p-2}\,d\bx
=0.
\end{align}
Dropping the term involving $\kappa$, integrating in time, and sending $p\maps\infty$, we find for all $p\in[2,\infty]$,
\begin{align*}
\|\theta(t)\|_{L^p}
\leq
\|\theta_0\|_{L^p}.
\end{align*}
On the other hand, setting $p=2$ in \eqref{bouss_v_3D_theta_Lp} and integrating in time, we find
\begin{align*}
|\theta(t)|^2+2\kappa\int_0^t\|\theta(s)\|^2\,ds
\leq
|\theta_0|^2.
\end{align*}
Next, following similar steps as in the derivation of  \eqref{bouss_L2_est} and \eqref{v_u_en}, we find for $t\in[0,T]$,
    \begin{align*}
       |\bu(t)|^2+\alpha^2\|\bu(t)\|^2 \leq |\bu_0|^2+\alpha^2\|\bu_0\|^2 + T^2|\theta_0|^2 :=(K_{\alpha,1})^2.
    \end{align*}
    By formally taking the inner product of \eqref{e:funct_1_voigt} with $A\bu$, we find
\begin{align*}
       \frac{1}{2}\frac{d}{dt}(\|\bu\|^2+\alpha^2|A\bu|^2)
       &= -(B(\bu,\bu),A\bu)+(\theta \be_3,A\bu)
       \\&\leq C\|\bu\||A\bu|^2+|\theta| |A\bu|
       \leq C K_{\alpha,1}\alpha^{-1}|A\bu|^2+|\theta_0| |A\bu|
       \\&\leq C(1+K_{\alpha,1}\alpha^{-1}) |A\bu|^2+|\theta_0|^2.
    \end{align*}
Using Gr\"onwall's inequality, we obtain a constant $K_{\alpha,2}=(\|\bu_0\|^2+\alpha^2|A\bu_0|^2+\frac{2K_{\alpha,1}}{T})e^{c_0T}$, where $c_0:=C\alpha^{-2}+K_{\alpha,1}\alpha^{-3}$, such that $\|\bu(t)\|^2+\alpha^2|A\bu(t)|^2\leq K_{\alpha,2}$ for a.e. $t\in[0,T]$.  Next, we formally take the inner product of \eqref{e:funct_1_voigt} with $A^2\bu$ (recalling that, in the periodic case, $-\triangle = A$), to find
\begin{align}
&\quad\notag
       \frac{1}{2}\frac{d}{dt}(|A\bu|^2+\alpha^2\|A\bu\|^2)
=
       -\sum_{j=1}^3(u^j\partial_j\bu,\triangle^2\bu)+(\theta \be_3,\triangle^2\bu)
\\&=\notag
       -\sum_{j=1}^3(\triangle u^j\cdot\partial_j\bu,\triangle \bu)
       -2 \sum_{i,j=1}^3(\partial_i u^j\cdot\partial_i\partial_j\bu,\triangle \bu)
       -\sum_{j=1}^3(u^j\partial_j\triangle \bu,\triangle \bu)
       \\&\quad\notag
       -((\theta \be_3,\triangle\bu))
\\&\leq\label{bouss_v_3D_u_H3}
       C|A\bu|^2\|A\bu\|+\|\theta\| \|A\bu\|
\leq
       C\pnt{(K_{\alpha,2})^4+\|A\bu\|^2+\|\theta\|^2}.
    \end{align}

These \textit{a priori} estimates can be used to form a rigorous argument as follows.  In the case $p=2$, existence can be proven by using, e.g., the Galerkin method as in the proof of theorem \eqref{bouss_v_exist}, substituting the \textit{a priori} estimates established in this section as necessary.   Passage to the limit, estimates on the time derivatives, and the continuity properties of Definition \ref{def:bouss_v_3D} can be established using similar ideas to those used in the proofs of some of the previous theorems.  For the case $\theta_0\in L^p(\nT^3)$, with $p\in(2,\infty)$, we begin by smoothing $\theta_0$ (e.g., by convolving it with a mollifier) to get smooth functions $\theta_0^\epsilon$ for each $\epsilon>0$, which converge to $\theta_0$ as $\epsilon\maps0$ in several relevant norms.  Clearly $\theta_0^\epsilon\in L^2(\nT^3)$.  Thus, thanks to the existence of solutions for the $p=2$ case, there exists a solution $(\bu^\epsilon,\theta^\epsilon)$ to \eqref{bouss_v_3D} with initial data $(\bu_0,\theta_0^\epsilon)$ such that $\theta^\epsilon \in L^\infty([0,T],L^2)$ and $\bu^\epsilon\in C([0,T],V\cap H^3)$.  One may then show that $\theta^\epsilon \in L^\infty([0,T],H^2)$, e.g., by using the Galerkin method and deriving straight-forward \textit{a priori} estimates on higher derivatives.  Since in three dimensions $L^\infty([0,T],H^2)$ is an algebra, it follows that $|\theta^\epsilon|^{p-2}\theta^\epsilon\in L^\infty([0,T],H^2)$, so that the above \textit{a priori} estimates can be established rigorously for $(\bu^\epsilon,\theta^\epsilon)$.  Furthermore, the bounds can be made to be independent of $\epsilon$.  Standard arguments show that one can extract subsequences of $(\bu^\epsilon,\theta^\epsilon)$  which converge in several relevant norms as $\epsilon\maps0$ to a solution $(\bu,\theta)$ of \eqref{bouss_v_3D} corresponding to initial data $(\bu_0,\theta_0)$.  Taking the limit as $\epsilon\maps0$, one may show that  $\theta\in L^\infty([0,T],L^p)$. Finally, in the case $p=\infty$, one may employ, e.g., the  Hopf-Stampacchia technique used in Step 4 of the proof of Theorem \ref{exist_weak_visc}.

With the above \textit{a priori} estimates formally established, we now show the uniqueness of solutions to \eqref{bouss_v_3D} under the additional hypothesis that $\theta_0\in L^\infty(\nT^3)$.  Letting $(\bu_1,\theta_1)$ and $(\bu_2,\theta_2)$ be two solutions to \eqref{bouss_v_3D} with initial data $(\bu_0,\theta_0)$.  Let us write $\bud:=\bu_1-\bu_2$ and $\thetad=\theta_1-\theta_2$.  As in the proof of Theorem \ref{uniqueness_visc} we also write $\xid=\triangle^{-1} \theta$ and $\xi_i=\triangle^{-1}\theta_i$, $i=1,2$, subject to the side condition $\int_{\nT^3} \xi_i\,d\bx=0$ for $i=1,2$.  Following nearly identical steps to the derivation of \eqref{bouss_u_diff}, we find,
\begin{align}
      \frac{1}{2}\frac{d}{dt}(|\bud|^2  + \alpha^2\|\bud\|^2)
      &\leq\label{bouss_v_u_diff}
      C_{\alpha,\kappa} K_{\alpha,1}(|\bud|^2+\alpha^2\|\bud\|^2+\|\xid\|^2).
   \end{align}
   Similarly, following nearly identical steps to the derivation of \eqref{bouss_diff_en_xi}, we find,
   \begin{align}
\frac{1}{2}\frac{d}{dt}\|\xid\|^2   +\kappa|\triangle\xid|^2
      &=\notag
      \pair{\bud\triangle\xi_1}{\nabla\xid}
      +\sum_{j=1}^3\pair{\partial_j\bu_2}{\nabla\xid\partial_j\xid}
      \\&\leq
      |\bud|\|\theta_1\|_{L^\infty}\|\xid\|
      +\|\bu_2\|_{H^3}\|\xid\|^2
      \leq\label{bouss_v_xi_diff}
      K_{\alpha,4}(|\bud|^2+\|\xid\|^2),
\end{align}
where $K_{\alpha,4} := \max\set{2\|\theta_0\|_{L^\infty},K_{\alpha,3}}<\infty$.  Uniqueness now follows by adding \eqref{bouss_v_u_diff} and \eqref{bouss_v_xi_diff} and using Gr\"onwall's inequality.
  \end{proof}
\section{Appendix} \label{s:app}
We prove the inequality \eqref{brezis}.  The proof is based on the proof of the Brezis-Gallouet inequality \cite{Brezis_Gallouet_1980} and follows almost line-by-line the proof given in \cite{Cao_Titi_2009}.  For $\bw\in\mathcal{D}(A)$, let us write
\begin{align*}
   \bw = \sum_{\bk\in\nZ^2\setminus(0,0)}a_\bk\bw_\bk
\end{align*}
where $\bw_\bk$ are the (normalized) eigenfunctions of $A$ (see Section \ref{sec:Pre}) and $a_\bk:=(\bw,\bw_\bk)$.   Choose $M=(e^{1/\epsilon^{1/4}}-1)^{1/2}$ for a given $\epsilon>0$, sufficiently small so that $M>1$.  We have
{\allowdisplaybreaks
\begin{align*}
\|\bw\|_{L^\infty}
&\leq
\sum_{\bk\in\nZ^2\setminus(0,0)}|a_\bk|
=
\sum_{0<|\bk|\leq M}|a_\bk|
+\sum_{|\bk|> M}|a_\bk|
\\&=
\sum_{0<|\bk|\leq M}\frac{(1+|\bk|^2)^{1/2}}{(1+|\bk|^2)^{1/2}}|a_\bk|
+\sum_{|\bk|> M}\frac{(1+|\bk|^2)}{(1+|\bk|^2)}|a_\bk|
\\&\leq
\pnt{\sum_{0<|\bk|\leq M}(1+|\bk|^2)|a_\bk|^2}^{1/2}
\pnt{\sum_{0<|\bk|\leq M}\frac{1}{(1+|\bk|^2)}}^{1/2}
\\&\quad
+\pnt{\sum_{|\bk|> M}(1+|\bk|^2)^2|a_\bk|^2}^{1/2}
\pnt{\sum_{|\bk|> M}\frac{1}{(1+|\bk|^2)^2}}^{1/2}
\\&\leq
C\|\bw\|
\pnt{\int_{|\bx|\leq M}\frac{d\bx}{(1+\bx^2)}}^{1/2}
+C|A\bw|
\pnt{\int_{|\bx|> M}\frac{d\bx}{(1+|\bx|^2)^2}}^{1/2}
\\&=
C\|\bw\|
\pi\log(1+M^2)
+C|A\bw|\frac{\pi}{1+M^2}
\\&= C\pnt{\|\bw\|\epsilon^{-1/4}+ |A\bw|e^{-1/\epsilon^{1/4}}}.
\end{align*}
} 

\section*{Acknowledgements}
The authors are thankful for the warm hospitality
of  the Institute for Mathematics and its
Applications (IMA), University of Minnesota, where
part of this work was completed. This work was
supported in part by the NSF grants
no.~DMS-0708832, DMS-1009950. E.S.T. also
acknowledges the kind hospitality of the Freie
Universit\"at - Berlin, and the support of the
Alexander von Humboldt Stiftung/Foundation and the
Minerva Stiftung/Foundation.
\begin{scriptsize}
\bibliographystyle{amsplain}
\bibliography{LariosBiblio}

\providecommand{\bysame}{\leavevmode\hbox to3em{\hrulefill}\thinspace}
\providecommand{\MR}{\relax\ifhmode\unskip\space\fi MR }
\providecommand{\MRhref}[2]{%
  \href{http://www.ams.org/mathscinet-getitem?mr=#1}{#2}
}
\providecommand{\href}[2]{#2}
\begin{thebibliography}{10}

\bibitem{Adams_Fournier_2003}
R.~A. Adams and J.~J.~F. Fournier, \emph{Sobolev {S}paces}, second ed., Pure
  and Applied Mathematics (Amsterdam), vol. 140, Elsevier/Academic Press,
  Amsterdam, 2003. \MR{2424078}

\bibitem{Adhikari_Cao_Wu_2010}
D.~Adhikari, C.~Cao, and J.~Wu, \emph{The 2{D} {B}oussinesq equations with
  vertical vertical viscosity and vertical diffusivity}, J. Differential
  Equations \textbf{249} (2010), no.~5, 1078--1088.

\bibitem{Agmon_1965}
S.~Agmon, \emph{Lectures {O}n {E}lliptic {B}oundary {V}alue {P}roblems},
  Prepared for publication by B. Frank Jones, Jr. with the assistance of George
  W. Batten, Jr. Van Nostrand Mathematical Studies, No. 2, D. Van Nostrand Co.,
  Inc., Princeton, N.J.-Toronto-London, 1965. \MR{0178246 (31 \#2504)}

\bibitem{Benjamin_Bona_Mahony_1972}
T.~B. Benjamin, J.~L. Bona, and J.~J. Mahony, \emph{Model equations for long
  waves in nonlinear dispersive systems}, Philos. Trans. Roy. Soc. London Ser.
  A \textbf{272} (1972), no.~1220, 47--78. \MR{0427868 (55 \#898)}

\bibitem{Bohm_1992}
M.~B{\"o}hm, \emph{On {N}avier-{S}tokes and {K}elvin-{V}oigt equations in three
  dimensions in interpolation spaces}, Math. Nachr. \textbf{155} (1992),
  151--165. \MR{1231262 (94f:35107)}

\bibitem{Bony_1981}
J.-M. Bony, \emph{Calcul symbolique et propagation des singularit\'es pour les
  \'equations aux d\'eriv\'ees partielles non lin\'eaires}, Ann. Sci. \'Ecole
  Norm. Sup. (4) \textbf{14} (1981), no.~2, 209--246. \MR{631751 (84h:35177)}

\bibitem{Brezis_Gallouet_1980}
H.~Br{\'e}zis and T.~Gallouet, \emph{Nonlinear {S}chr\"odinger evolution
  equations}, Nonlinear Anal. \textbf{4} (1980), no.~4, 677--681. \MR{582536
  (81i:35139)}

\bibitem{Cannon_DiBenedetto_1980}
J.~R. Cannon and E.~DiBenedetto, \emph{The initial value problem for the
  {B}oussinesq equations with data in {$L^{p}$}}, Approximation methods for
  {N}avier-{S}tokes problems ({P}roc. {S}ympos., {U}niv. {P}aderborn,
  {P}aderborn, 1979), Lecture Notes in Math., vol. 771, Springer, Berlin, 1980,
  pp.~129--144. \MR{565993 (81f:35101)}

\bibitem{Cao_Lunasin_Titi_2006}
Y.~Cao, E.~Lunasin, and E.~S. Titi, \emph{Global well-posedness of the
  three-dimensional viscous and inviscid simplified {B}ardina turbulence
  models}, Commun. Math. Sci. \textbf{4} (2006), no.~4, 823--848. \MR{2264822
  (2007h:35254)}

\bibitem{Cao_Titi_2009}
Y.~Cao and E.~S. Titi, \emph{On the rate of convergence of the two-dimensional
  {$\alpha$}-models of turbulence to the {N}avier-{S}tokes equations}, Numer.
  Funct. Anal. Optim. \textbf{30} (2009), no.~11-12, 1231--1271. \MR{2591037}

\bibitem{Carroll_Showalter_1976}
R.~W. Carroll and R.~E. Showalter, \emph{Singular and {D}egenerate {C}auchy
  {P}roblems}, Academic Press [Harcourt Brace Jovanovich Publishers], New York,
  1976, Mathematics in Science and Engineering, Vol. 127. \MR{0460842 (57
  \#834)}

\bibitem{Catania_2009}
D.~Catania, \emph{Global existence for a regularized
  magnetohydrodynamic-$\alpha$ model}, Ann. Univ Ferrara \textbf{56} (2010),
  1--20, 10.1007/s11565-009-0069-1.

\bibitem{Catania_Secchi_2009}
D.~Catania and P.~Secchi, \emph{Global existence for two regularized mhd models
  in three space-dimension}, Quad. Sem. Mat. Univ. Brescia (2009), no.~37.

\bibitem{Chae_2005}
D.~Chae, \emph{Global regularity for the 2{D} {B}oussinesq equations with
  partial viscosity terms}, Adv. Math. \textbf{203} (2006), no.~2, 497--513.
  \MR{2227730 (2007e:35223)}

\bibitem{Chae_Nam_1997}
D.~Chae and H.-S. Nam, \emph{Local existence and blow-up criterion for the
  {B}oussinesq equations}, Proc. Roy. Soc. Edinburgh Sect. A \textbf{127}
  (1997), no.~5, 935--946. \MR{1475638 (98e:35133)}

\bibitem{Constantin_Foias_1988}
P.~Constantin and C.~Foias, \emph{Navier-{S}tokes {E}quations}, Chicago
  Lectures in Mathematics, University of Chicago Press, Chicago, IL, 1988.
  \MR{972259 (90b:35190)}

\bibitem{Danchin_Paicu_2008}
R.~Danchin and M.~Paicu, \emph{Global existence results for the anisotropic
  {B}oussinesq system in dimension two},  (2008).

\bibitem{Danchin_Paicu_2008_French}
R.~Danchin and M.~Paicu, \emph{Les th\'eor\`emes de {L}eray et de
  {F}ujita-{K}ato pour le syst\`eme de {B}oussinesq partiellement visqueux},
  Bull. Soc. Math. France \textbf{136} (2008), no.~2, 261--309. \MR{2415344
  (2009k:35230)}

\bibitem{DiBenedetto_Showalter_1981}
E.~DiBenedetto and R.~E. Showalter, \emph{Implicit degenerate evolution
  equations and applications}, SIAM J. Math. Anal. \textbf{12} (1981), no.~5,
  731--751. \MR{625829 (83i:47070)}

\bibitem{DiPerna_Lions_1989}
R.~J. DiPerna and P.-L. Lions, \emph{Ordinary differential equations, transport
  theory and sobolev spaces}, Inventiones Mathematicae \textbf{98} (1989),
  511--547.

\bibitem{Ebrahimi_Holst_Lunasin_2009}
M.~A. Ebrahimi, M.~Holst, and E.~Lunasin, \emph{The {N}avier-{S}tokes-{V}oight
  model for image inpainting}, arXiv:0901.4548v2 (2009), (in submission).

\bibitem{Evans_1998}
L.~C. Evans, \emph{Partial {D}ifferential {E}quations}, Graduate Studies in
  Mathematics, vol.~19, American Mathematical Society, Providence, RI, 1998.
  \MR{1625845 (99e:35001)}

\bibitem{Foias_Manley_Rosa_Temam_2001}
C.~Foias, O.~Manley, R.~Rosa, and R.~Temam, \emph{Navier-{S}tokes {E}quations
  and {T}urbulence}, Encyclopedia of Mathematics and its Applications, vol.~83,
  Cambridge University Press, Cambridge, 2001. \MR{1855030 (2003a:76001)}

\bibitem{Foias_Manley_Temam_1987}
C.~Foias, O.~Manley, and R.~Temam, \emph{Attractors for the {B}\'enard problem:
  existence and physical bounds on their fractal dimension}, Nonlinear Anal.
  \textbf{11} (1987), no.~8, 939--967. \MR{903787 (89f:35166)}

\bibitem{Hmidi_Keraani_2007}
T.~Hmidi and S.~Keraani, \emph{On the global well-posedness of the
  two-dimensional {B}oussinesq system with a zero diffusivity}, Adv.
  Differential Equations \textbf{12} (2007), no.~4, 461--480. \MR{2305876
  (2009c:35404)}

\bibitem{Hou_Li_2005}
T.~Y. Hou and C.~Li, \emph{Global well-posedness of the viscous {B}oussinesq
  equations}, Discrete Contin. Dyn. Syst. \textbf{12} (2005), no.~1, 1--12.
  \MR{2121245 (2005j:76026)}

\bibitem{Yudovich_1963}
V.~I. Judovi{\v{c}}, \emph{Non-stationary flows of an ideal incompressible
  fluid}, \u Z. Vy\v cisl. Mat. i Mat. Fiz. \textbf{3} (1963), 1032--1066.
  \MR{0158189 (28 \#1415)}

\bibitem{Kalantarov_1986}
V.~K. Kalantarov, \emph{Attractors for some nonlinear problems of mathematical
  physics}, Zap. Nauchn. Sem. Leningrad. Otdel. Mat. Inst. Steklov. (LOMI)
  \textbf{152} (1986), no.~Kraev. Zadachi Mat. Fiz. i Smezhnye Vopr. Teor.
  Funktsii18, 50--54, 182. \MR{869241 (89a:58061)}

\bibitem{Kalantarov_Levant_Titi_2009}
V.~K. Kalantarov, B.~Levant, and E.~S. Titi, \emph{Gevrey regularity for the
  attractor of the 3{D} {N}avier-{S}tokes-{V}oight equations}, J. Nonlinear
  Sci. \textbf{19} (2009), no.~2, 133--152. \MR{2495891}

\bibitem{Kalantarov_Titi_2009}
V.~K. Kalantarov and E.~S. Titi, \emph{Global attractors and determining modes
  for the 3d {N}avier-{S}tokes-{V}oight equations}, Chinese Ann. Math. B
  \textbf{30} (2009), no.~6, 697--714.

\bibitem{Khouider_Titi_2008}
B.~Khouider and E.~S. Titi, \emph{An inviscid regularization for the surface
  quasi-geostrophic equation}, Comm. Pure Appl. Math. \textbf{61} (2008),
  no.~10, 1331--1346. \MR{2436184}

\bibitem{Kinderlehrer_Stampacchia_1980}
D.~Kinderlehrer and G.~Stampacchia, \emph{An {I}ntroduction to {V}ariational
  {I}nequalities and {T}heir {A}pplications}, Pure and Applied Mathematics,
  vol.~88, Academic Press Inc. [Harcourt Brace Jovanovich Publishers], New
  York, 1980. \MR{567696 (81g:49013)}

\bibitem{Larios_Titi_2010_MHD}
A.~Larios and E.~S. Titi, \emph{A note on the higher-order global regularity of
  an inviscid {V}oigt-regularization of the three-dimensional inviscid
  resistive magnetohydrodynamic equations}, Preprint.

\bibitem{Larios_Titi_2009}
\bysame, \emph{On the higher-order global regularity of the inviscid
  {V}oigt-regularization of three-dimensional hydrodynamic models}, Discrete
  Contin. Dyn. Syst. Ser. B \textbf{14} (2010), no.~2/3 \#15, 603--627.

\bibitem{Layton_Lewandowski_2006}
W.~Layton and R.~Lewandowski, \emph{On a well-posed turbulence model}, Discrete
  Contin. Dyn. Syst. Ser. B \textbf{6} (2006), no.~1, 111--128 (electronic).
  \MR{2172198 (2006g:76054)}

\bibitem{Levant_Ramos_Titi_2009}
B.~Levant, F.~Ramos, and E.~S. Titi, \emph{On the statistical properties of the
  3d incompressible {N}avier-{S}tokes-{V}oigt model}, Commun. Math. Sci.
  \textbf{8} (2010), no.~1, 277--293.

\bibitem{Levermore_Oliver_Titi_1996}
C.~D. Levermore, M.~Oliver, and E.~S. Titi, \emph{Global well-posedness for
  models of shallow water in a basin with a varying bottom}, Indiana Univ.
  Math. J. \textbf{45} (1996), no.~2, 479--510. \MR{1414339 (97m:35214)}

\bibitem{Lieb_Loss_2001}
E.~H. Lieb and M.~Loss, \emph{Analysis}, second ed., Graduate Studies in
  Mathematics, vol.~14, American Mathematical Society, Providence, RI, 2001.
  \MR{1817225 (2001i:00001)}

\bibitem{Majda_Bertozzi_2002}
A.~J. Majda and A.~L. Bertozzi, \emph{Vorticity and {I}ncompressible {F}low},
  Cambridge Texts in Applied Mathematics, vol.~27, Cambridge University Press,
  Cambridge, 2002. \MR{1867882 (2003a:76002)}

\bibitem{Oskolkov_1973}
A.~P. Oskolkov, \emph{The uniqueness and solvability in the large of boundary
  value problems for the equations of motion of aqueous solutions of polymers},
  Zap. Nau\v cn. Sem. Leningrad. Otdel. Mat. Inst. Steklov. (LOMI) \textbf{38}
  (1973), 98--136, Boundary value problems of mathematical physics and related
  questions in the theory of functions, 7. \MR{0377311 (51 \#13483)}

\bibitem{Oskolkov_1982}
\bysame, \emph{On the theory of unsteady flows of {K}elvin-{V}oigt fluids},
  Zap. Nauchn. Sem. Leningrad. Otdel. Mat. Inst. Steklov. (LOMI) \textbf{115}
  (1982), 191--202, 310, Boundary value problems of mathematical physics and
  related questions in the theory of functions, 14. \MR{660082 (83j:35132)}

\bibitem{Peszynska_Showalter_Yi_2009}
M.~Peszy{\'n}ska, R.~Showalter, and S.-Y. Yi, \emph{Homogenization of a
  pseudoparabolic system}, Appl. Anal. \textbf{88} (2009), no.~9, 1265--1282.
  \MR{2574329}

\bibitem{Ramos_Titi_2010}
F.~Ramos and E.S. Titi, \emph{Invariant measures for the 3{D}
  {N}avier-{S}tokes-{V}oigt equations and their {N}avier-{S}tokes limit},
  Discrete Contin. Dyn. Syst. (2010), (to appear).

\bibitem{Robinson_2001}
J.~C. Robinson, \emph{Infinite-{D}imensional {D}ynamical {S}ystems}, Cambridge
  Texts in Applied Mathematics, Cambridge University Press, Cambridge, 2001, An
  {I}ntroduction to {D}issipative {P}arabolic {PDE}s and the {T}heory of
  {G}lobal {A}ttractors. \MR{1881888 (2003f:37001a)}

\bibitem{Showalter_1970_SG}
R.~E. Showalter, \emph{Local regularity of solutions of {S}obolev-{G}alpern
  partial differential equations}, Pacific J. Math. \textbf{34} (1970),
  781--787. \MR{0267251 (42 \#2153)}

\bibitem{Showalter_1970_odd}
\bysame, \emph{Well-posed problems for a partial differential equation of order
  {$2m+1$}}, SIAM J. Math. Anal. \textbf{1} (1970), 214--231. \MR{0261171 (41
  \#5787)}

\bibitem{Showalter_1972_rep}
\bysame, \emph{Existence and representation theorems for a semilinear {S}obolev
  equation in {B}anach space}, SIAM J. Math. Anal. \textbf{3} (1972), 527--543.
  \MR{0315239 (47 \#3788)}

\bibitem{Showalter_1975_nonlin}
\bysame, \emph{Nonlinear degenerate evolution equations and partial
  differential equations of mixed type}, SIAM J. Math. Anal. \textbf{6} (1975),
  25--42. \MR{0394351 (52 \#15154a)}

\bibitem{Showalter_1975_Sobolev2}
\bysame, \emph{The {S}obolev equation. {II}}, Applicable Anal. \textbf{5}
  (1975), no.~2, 81--99. \MR{0415105 (54 \#3196)}

\bibitem{Temam_1995_Fun_Anal}
R.~Temam, \emph{Navier-{S}tokes {E}quations and {N}onlinear {F}unctional
  {A}nalysis}, second ed., CBMS-NSF Regional Conference Series in Applied
  Mathematics, vol.~66, Society for Industrial and Applied Mathematics (SIAM),
  Philadelphia, PA, 1995. \MR{1318914 (96e:35136)}

\bibitem{Temam_1997_IDDS}
\bysame, \emph{Infinite-{D}imensional {D}ynamical {S}ystems {I}n {M}echanics
  and {P}hysics}, second ed., Applied Mathematical Sciences, vol.~68,
  Springer-Verlag, New York, 1997. \MR{1441312 (98b:58056)}

\bibitem{Temam_2001_Th_Num}
\bysame, \emph{Navier-{S}tokes {E}quations: {T}heory and {N}umerical
  {A}nalysis}, AMS Chelsea Publishing, Providence, RI, 2001, Theory and
  numerical analysis, Reprint of the 1984 edition. \MR{1846644 (2002j:76001)}

\bibitem{Wang_1993}
X.~M. Wang, \emph{A remark on the characterization of the gradient of a
  distribution}, Appl. Anal. \textbf{51} (1993), no.~1-4, 35--40. \MR{1278991
  (95k:46064)}

\end{thebibliography}
\end{scriptsize}
\end{document}